\newtheorem{theorem}{Theorem}
\numberwithin{theorem}{section}
\newtheorem{lemma}[theorem]{Lemma}
\numberwithin{equation}{section}
\newlength\figureheight
\newlength\figurewidth
\newcommand{\sethw}[2]{
	\setlength\figureheight{#1\textwidth}
	\setlength\figurewidth{#2\textwidth}	
}
\definecolor{refkey}{rgb}{0.9451,0.2706,0.4941}\definecolor{labelkey}{rgb}{0.9451,0.2706,0.4941}
\definecolor{darkred}{RGB}{139,0,0}
\definecolor{darkgreen}{RGB}{0,100,0}
\definecolor{darkmagenta}{RGB}{139,0,139}
\newcommand{\pg}[1]{{#1}}
\newcommand{\avb}[1]{}
\newcommand{\bsk}{{\boldsymbol{k}}}
\newcommand{\bsb}{{\boldsymbol{b}}}
\newcommand{\bsx}{{\boldsymbol{x}}}
\newcommand{\bsy}{{\boldsymbol{y}}}
\newcommand{\bsz}{{\boldsymbol{z}}}
\newcommand{\bsnu}{{\boldsymbol{\nu}}}
\newcommand{\bsm}{{\boldsymbol{m}}}
\newcommand{\bbR}{{\mathbb{R}}}
\newcommand{\calI}{\mathcal{I}}
\newcommand{\bs}[1]{\boldsymbol{#1}} 
\newcommand{\mean}[1]{\ensuremath{\mathbb{E}\mathopen{}\left[{#1}\right]\mathclose{}}}
\newcommand{\meansmall}[1]{\ensuremath{\mathbb{E}\mathopen{}[{#1}]\mathclose{}}}
\newcommand{\cov}[2]{\text{Cov}\left[#1,#2\right]}
\newcommand{\norm}[1]{\ensuremath{\|{#1}\|}}
\newcommand{\und}{\cdot} 
\newcommand{\hqy}{{q_s^\bsy}}
\newcommand{\asy}{a_s^\bsy}
\newcommand{\qysl}{{q_{s_\ell}^\bsy}}
\newcommand{\qysll}{{q_{s_{\ell-1}}^\bsy}}
\newtheorem{assumption}{Assumption}
\title{Multilevel Quasi-Monte Carlo for Optimization under Uncertainty}
\date{\today}
\author{Philipp A. Guth\footnote{Philipp A. Guth (\href{mailto:pguth@uni-mannheim.de}{pguth@uni-mannheim.de}), Institute of Mathematics, University of Mannheim, B6 28-29, 68159 Mannheim, Germany.}, Andreas Van Barel\footnote{Andreas Van Barel (\href{mailto:andreas.vanbarel@kuleuven.be}{andreas.vanbarel@kuleuven.be}), Department of Computer Science, KU Leuven, Celestijnenlaan 200A, 3001, Leuven, Belgium.}}
\begin{document}

\maketitle

\begin{abstract}
This paper considers the problem of optimizing the average tracking error for an elliptic partial differential equation with an uncertain lognormal diffusion coefficient. In particular, the application of the multilevel quasi-Monte Carlo (MLQMC) method to the estimation of the gradient is investigated, with a circulant embedding method used to sample the stochastic field. A novel regularity analysis of the adjoint variable is essential for the MLQMC estimation of the gradient in combination with the samples generated using the CE method. A rigorous cost and error analysis shows that a randomly shifted quasi-Monte Carlo method leads to a faster rate of decay in the root mean square error of the gradient than the ordinary Monte Carlo method, while considering multiple levels substantially reduces the computational effort. Numerical experiments confirm the improved rate of convergence and show that the MLQMC method outperforms the multilevel Monte Carlo method and the single level quasi-Monte Carlo method.
\end{abstract}

\paragraph*{Mathematics Subject Classification}
65D30, 65D32, 35Q93, 65C05, 49M41, 35R60.

\section{Introduction}
Many complex systems and physical phenomena can be modeled by a partial differential equation (PDE). However, some parameters may be unknown or uncertain. When optimizing for a problem with uncertain parameters, one is interested in a robust optimum, i.e., one that performs well for a wide range of parameter realizations.
In this paper, we consider the model problem
\begin{align*}
\min_{z \in L^2(D)} J(z)\,, \quad J(z) := \frac{1}{2}\int_\Omega\,\|u(z) - g\|^2_{L^2(D)}\,\mathrm d\mathbb{P} + \frac{\alpha}{2}\|z\|_{L^2(D)}^2\,,
\end{align*}
where $\alpha>0$ is a regularization parameter and $u$ as a function of the control $z$ solves the Poisson equation
\begin{align}\label{eq:PDE}
\int_D a(x,\omega) \nabla u(x,\omega) \cdot \nabla v(x) \, \mathrm dx = \int_D z(x) v(x) \,\mathrm dx,\, \quad \forall v\in H_0^1(D)\,.
\end{align}
Here, $D \subset \mathbb{R}^d$ with $d=1,2$ or $3$ is a bounded Lipschitz domain. We consider Dirichlet boundary conditions, i.e., $u \in H_0^1(D)$ has zero trace.
 The diffusion coefficient $a(x,\omega)$ is assumed to be stochastic, i.e., dependent on some random influence $\omega \in \Omega$, where $\omega$ is an element of the set of events $\Omega$ in a suitable probability space $(\Omega,\mathcal{A},\mathbb{P})$. Any deterministic $z \in L^2(D)$ then leads to a solution $u$ that also depends on $\omega$. 
The optimality conditions are
\begin{align}
\int_D a(x,\omega) \nabla u(x,\omega) \cdot \nabla v(x) \, \mathrm dx &= \int_D z(x) v(x) \,\mathrm dx,\, \quad \forall v\in H_0^1(D), \\
\int_D a(x,\omega) \nabla q(x,\omega) \cdot \nabla v(x) \,\mathrm dx &= \int_D (u(x,\omega)-g(x))\,v(x) \,\mathrm dx,\,\quad \forall v \in H_0^1(D), \\
\nabla J(z) &= \mean{q} +\alpha z = 0. \label{eq:adjoint}
\end{align}
The first equation is the state or constraint equation (\ref{eq:PDE}), the second equation is the adjoint equation and the third equation expresses the optimality condition, stating that the gradient should be zero in the optimal point.
They can be obtained by, e.g., constructing the Lagrangian and setting its derivatives to zero, see \cite{borzi2012}.
In this paper, we address the problem of obtaining an estimate for $\mean{q}$ and therefore $\nabla J(z)$ using a multilevel quasi-Monte Carlo (MLQMC) method. The resulting gradient could be used in a gradient based optimization problem to find a solution. The optimality conditions in a more general setting and for more elaborate risk measures are discussed in \cite{kouri2018existence}.

Ideas from several previous works are drawn upon in this paper. First, the single level quasi-Monte Carlo (QMC) method was investigated and analyzed for this problem in \cite{guth2021quasi}. Secondly, \cite{kuo2017multilevel, kuo2015multi} discusses the application of the MLQMC method to the forward problem (\ref{eq:PDE}). Both \cite{guth2021quasi} and \cite{kuo2017multilevel, kuo2015multi} build on previous papers applying the QMC method to the forward PDE problem; see, e.g., \cite{kuo2016application, graham2011}. 
Next, the application of multilevel Monte Carlo (MLMC) for the optimization problem at hand can be found in \cite{vanbarel2019robust}. It is itself based on \cite{giles2015,cliffe2011} where the MLMC method is applied to the forward problem. This paper attempts to combine these ideas by employing a MLQMC for the estimation of $\mean{q}$ in (\ref{eq:adjoint}). In \cite{guth2021quasi, kuo2017multilevel, vanbarel2019robust}, the uncertain coefficient $a$ is sampled using the Karhunen--Lo\`eve (KL) expansion. However, in this manuscript we follow \cite{graham2018circulant}, which uses the circulant embedding (CE) method with QMC. Using the CE method, we obtain exact realizations of the random field on a finite set of points and hence there is no truncation error. However, since the FE quadrature points typically do not match the CE grid, we need to interpolate the realizations of the random field. We are not aware of any previous work using MLQMC with CE, not even for the forward problem.

Many other techniques have been developed previously to solve the optimization problem at hand. E.g., a multilevel stochastic collocation algorithm was investigated in \cite{kouri2014multilevel}. There, other higher-order quadrature rules such as sparse grid methods are used to speed up the convergence rate. 
In \cite{KunothSchwab2013} the authors write the optimization problem as a parametric saddle point problem and derive analytic regularity. Based on the regularity result for the saddle point equation, they derive a generalized polynomial chaos approximation of the solution. In case of box-constraints on the control for instance, a nonlinearity prohibits writing the optimization problem as a parametric linear saddle point equation, see \cite{guth2021quasi}. In this case it is necessary to analyze the regularity of the gradient.
A different approach to solve optimization problems with PDE constraints under uncertainty is based on stochastic gradient descent methods \cite{martin2018analysis, martin2019multilevel}. The MLQMC method has the advantage that it is easily parallelizable and no need to estimate hyperparameters is needed. 
In this paper, we show that the use of QMC points leads to a faster rate of convergence than the ordinary Monte Carlo points. Using the multilevel strategy can further reduce the computational cost. The theoretical convergence rate, as derived in the analysis below, is easily observed in practice.

The paper is structured as follows. The generation of stochastic field samples using the CE method is detailed in \S\ref{sec:stoch_field}. Important notational conventions are also introduced there. The QMC method and its multilevel version is described in \S\ref{sec:QMC}. Numerical results for some concrete parameters are shown in \S\ref{sec:numerical}. Most of the paper is taken by \S\ref{sec:analysis} which provides a detailed analysis of the convergence properties of the MLQMC method. In particular, the section shows that the variances on each of the levels decay faster than the MC rate of $1/n_\ell$ with $n_\ell$ the number of samples taken on level $\ell$.

\section{Sampling and discretization}
\label{sec:stoch_field}
\newcommand{\sfield}{Z}

The random field is assumed to be \emph{lognormal}, i.e., of the form
\begin{align*}
a(x,\omega) = \exp(Z(x,\omega))\,,
\end{align*}
where $\omega$ is an element of the set of events $\Omega$ in the probability space $(\Omega,\mathcal{A},\mathbb{P})$ and $Z(x,\omega)$ is a Gaussian random field with prescribed mean $\bar{Z} = \mean{Z(x,\und)}$ and covariance $r_{\rm cov}(x,x') := \cov{Z(x,\und)}{Z(x',\und)} = \mean{(Z(x,\und)-\bar{Z})(Z(x',\und)-\bar{Z})}, \forall x,x'\in D$.

One could sample the underlying Gaussian stochastic field using the KL expansion \cite{karhunen1947ueber, loeve1946fonctions} of $\sfield$:
\begin{equation}
	\sfield(x,\omega) = \mean{\sfield(x,\und)} + \sum\limits_{n=1}^{\infty} \sqrt{\theta_n} \xi_n(\omega) f_n(x),\quad x\in D,\,\omega \in \Omega. \label{eq:KLexpansion}
\end{equation}
The KL expansion is the unique expansion of the above form (with $\|\xi_n\|_{L^2(\Omega)}=\|f_n\|_{L^2(D)}=1$) that minimizes the total mean square error if the expansion is truncated to a finite number of terms \cite{ghanem2003}. This sampling method is widely used, see e.g., \cite{borzi2009VWmultigrid, borzi2011pod, chen2014weighted, cliffe2011, graham2011, guth2021quasi, kuo2017multilevel, vanbarel2019robust}. The advantage is that the expansion represents the field $\sfield$ and therefore $a := \exp(\sfield)$ at all points in the domain $D$. In practice, one must however truncate the expansion at some point, introducing a truncation error. 

\newcommand{\normals}{\bs{Y}(\omega)}
\newcommand{\samples}{\bs{Z}(\omega)}
Alternatively, one can generate exact realizations of the field in a finite set of $m$ discretization points $x_1, \ldots, x_m$ which we collect in the vector $$\samples := [Z(x_1,\omega), \ldots, Z(x_m,\omega)]^\top.$$ To that end, consider the resulting covariance matrix $\Sigma = (r_{\rm cov}(x_i,x_j))_{i,j=1}^m$ and a factorization of the form $\Sigma = BB^\top$, where $B\in \bbR^{m\times s}$ with $s \geq m$. Defining $\overline{\bs{Z}} := [\mean{Z(x_1,\und)}, \ldots, \mean{Z(x_m,\und)}]^\top$, 
\begin{equation}\label{eq:randfield}
	\samples = B\normals + \overline{\bs{Z}}, \; \bs{Y} \sim \mathcal{N}(0,I_{s\times s})
\end{equation} 
then has the desired mean $\mean{\samples} = \overline{\bs{Z}}$ and covariance 
$$\mean{(\bs{Z}-\overline{\bs{Z}})(\bs{Z}-\overline{\bs{Z}})^\top} = \mean{B\bs{Y}\bs{Y}^\top B^\top} = B\mean{\bs{Y}\bs{Y}^\top}B^\top = BB^\top = \Sigma.$$ 
Generating a factorization $\Sigma = BB^\top$ costs in general $\mathcal{O}(m^3)$ operations. However, in what follows we consider grids and stochastic fields that satisfy the additional assumptions below.
\begin{assumption}
	\label{as:rrgrid}
	The set of points $x_1, \ldots, x_m$ forms a \emph{regular rectangular} (also referred to as a \emph{uniform rectilinear}) grid of points in $\mathbb{R}^d$, with $d$ the dimension.
\end{assumption}
\begin{assumption}
	\label{as:homogeneouscovfun}
	The covariance function $r_{\rm cov}(x,x')$ of the stochastic field is \emph{homogeneous}, meaning that it is a function of $x-x'$ only. The resulting stochastic field is said to be \emph{stationary} \cite{adler1981geometry}.
\end{assumption}
\noindent In this case, the CE method \cite{chan1997algorithm, dietrich1997, graham2011, wood1994simulation} can be used to very efficiently sample the stochastic field in the given regular rectangular grid of points. In the case $d=2$, $\Sigma$ is then block-Toeplitz with Toeplitz blocks and can be embedded in a block-circulant matrix $C$ with circulant blocks (hence the name of the method). This generalizes to more than two dimensions. The required circulant structure, and the amount of additional padding that may be necessary to ensure positive definiteness determine the size $s$ of $C \in \bbR^{s\times s}$. Usually, $s$ is of the same order of magnitude as $m$. A real eigenvalue factorization $C = G\Lambda G^\top$ of this symmetric nested circulant matrix can be obtained using the multidimensional fast Fourier transform, see, e.g., \cite{graham2011}. Since $\Sigma$ is embedded in a positive definite $C$, this leads to the desired factorization $\Sigma = BB^\top$ with $B \in \bbR^{m \times s}$ the first $m$ rows of $G \sqrt{\Lambda}$. 
For some given realization $\normals$ of $\bs{Y}$, a realization 
\begin{equation} \label{eq:CE_sample}
	\samples = B\normals + \overline{\bs{Z}}
\end{equation}
can then be obtained in $\mathcal{O}(s\log s)$ operations. Some additional details about employing quasi-Monte Carlo values to sample $\bs{Y}$ follow in \S\ref{sec:numerical}. The CE method is used in the remainder of the paper and allows us to avoid an analysis of the truncation error incurred by the KL expansion. However, the numerical results and the associated analysis are not fundamentally dependent on the use of the CE method.

We denote realizations $\normals$ of the random vector $\bs{Y}$ by $\bsy = (y_1,\ldots,y_s)$. Since samples of $a$ depend on $\omega$ through $\normals$, we employ the notational convention
\begin{align*}
a(x,\omega) =  a_s(x,\bs{y}) = a_s^\bsy(x), \quad x \in \{x_1,\ldots,x_m\}.
\end{align*}
So far, the sample $a_s^\bsy(x)$ of the lognormal random field is only defined (and exact) at any of the uniform CE grid points $\{x_i\}_{i=1}^m$. For the $i$-th point $x_i$, this definition is
\begin{equation} \label{eq:a_CE}
	a_s^\bsy(x_i) := \exp\big(\sum_{j=1}^s B_{i,j} y_j + \overline{Z}_i\big).
\end{equation}
In general these points do not match the quadrature points of the finite element (see next subsection) triangulation. Hence the need for an interpolation operator $\calI$. Values of the random field at arbitrary $x \in D$ are obtained by a multilinear interpolation, i.e., a convex combination of the vertex values $\{ x_{k,x} \}_{k=1}^{2^d}$ surrounding $x\in D \subset \mathbb{R}^d$. The resulting approximated sample of $a(x,\omega)$ is denoted by $a_s^\bsy(x)$ or $a_s(x,\bsy)$, and is then defined for all $x \in D$ and $\bsy \in \mathbb{R}^s$ as
\begin{align}\label{eq:linint}
a_s^\bsy(x) := \calI(a_s^\bsy;\{x_i\}_{i=1}^m)(x) := \sum_{k=1}^{2^d} w_{k,x} a_s^\bsy(x_{k,x})\,,
\end{align}
with $\sum_{k=1}^{2^d} w_{k,x} = 1$ and $0\leq w_{j,x} \leq 1$ for all $k= 1,\ldots,2^d$. 
The subscript $s$ indicates the dimension of the random vector $\bsy \in \mathbb{R}^s$ that is used to generate an approximate sample of $a$. 
Using this definition the interpolated field matches the exact field at the points $\{x_i\}_{i = 1}^m$. Moreover, we observe that the following important properties of the exact sample hold for the interpolated field as well:
\begin{itemize}
	\item If $a_s^\bsy$ is Lipschitz in $\{x_i\}_{i=1}^m$, then $a_s^\bsy$ is Lipschitz in all $x\in D$ with the same constant. 
	\item If $a_{\min}(\omega) \leq a(x,\omega) \leq a_{\max}(\omega)$ holds for all $x\in \{x_i\}_{i=1}^m$, then the same bounds also hold for all $x\in D$.
\end{itemize}
In \S\ref{sec:a_properties} the stochastic field properties are discussed in more detail.

Additionally, since we will be employing a multilevel method, it is convenient to be able to generate a sample of $a$ on two different grids starting from a single realization $\bsy$. Consider a first uniform rectilinear grid $\{x^1_{1},\ldots,x^1_{m_1}\}$ with $m_1$ points and a second one $\{x^0_{1},\ldots,x^0_{m_0}\}$ consisting of $m_0$ points. Let us assume the second grid to be coarser, i.e., $m_0 < m_1$, even though the following can be interpreted in general as well. Assume that the CE method requires the vector $\bsy$ to be of dimension $s_1$ for the fine grid and $s_0$ for the coarse grid. In the previous paragraph we defined $a_{s_1}^\bsy$ for $\bsy \in \mathbb{R}^{s_1}$ and $a_{s_0}^\bsy$ for $\bsy \in \mathbb{R}^{s_0}$. We now overload this notation to define $a_{s_0}^\bsy$ for $\bsy \in \mathbb{R}^{s_1}$ by
\begin{align}\label{eq:linint2}
a_{s_0}^\bsy(x) := \calI(a_{s_1}^\bsy;\{x^0_i\}_{i=1}^{m_0})(x).
\end{align}
This means that for a given $\bsy \in \mathbb{R}^{s_1}$, first the stochastic field sample $a_{s_1}^\bsy$ is found following (\ref{eq:linint}), which is then evaluated in the coarse grid points $\{x^0_i\}_{i=1}^{m_0}$ and used to generate $a_{s_0}^\bsy(x)$ by linear interpolation between those coarse grid points. 

We have two very important properties:
\begin{itemize}
	\item For a given $\bsy \in \mathbb{R}^{s_1}$, the field samples $a_{s_1}^\bsy$ and $a_{s_0}^\bsy$ are highly correlated. 
	\item if the coarser grid is nested, i.e., if $\{x^0_i\}_{i=1}^{m_0} \subseteq \{x^1_i\}_{i=1}^{m_1}$, then for either $\bsy \in \mathbb{R}^{s_1}$ or $\bsy \in \mathbb{R}^{s_0}$, a sample $a_{s_0}^\bsy$ is exact in the coarse grid points and interpolated in between. This implies that the distribution of $a_{s_0}^{\bs{Y}}$ with $\bs{Y} \sim \mathcal{N}(0,I_{s_1\times s_1})$ is identical to the distribution of $a_{s_0}^{\bs{Y}}$ with $\bs{Y} \sim \mathcal{N}(0,I_{s_0\times s_0})$. If only nested grids are considered, an expression such as $\mean{a^{\bs{Y}}_{s_0}}$ is then unambiguous, even if the size of $\bs{Y}$ is not explicitly stated.
\end{itemize}

Other random variables in this text depend on $\omega$ through their dependence on the stochastic field $a$. 
Therefore, we analogously define $u_s(\und,\bsy)=u_s^\bsy$ and $q_s(\und,\bsy)=q_s^\bsy$ as realizations of the state $u$ and adjoint $q$ obtained by the interpolated stochastic field $a_s^\bsy$, i.e.,
\begin{align}
\int_D a_s^\bsy \nabla u_s^\bsy \cdot \nabla v \, \mathrm dx &= \int_D z v \,\mathrm dx,\, \quad \forall v\in H_0^1(D)\, \label{eq:PDE_state_sampled}\\
\int_D a_s^\bsy \nabla q_s^\bsy \cdot \nabla v \,\mathrm dx &= \int_D (u_s^\bsy-g)\,v \,\mathrm dx,\,\quad \forall v \in H_0^1(D)\,.\label{eq:PDE_adjoint_sampled}
\end{align}
In general, for any variable $v$ that depends on $\omega$ only through the stochastic field $a$, the notation $v_s(\bsy)$ or $v_s^\bsy$ implies its evaluation for the approximate realization $a_s^\bsy$. That is, for any $v(\omega) = f(a(\und,\omega))$, for some $f$, $v_s^\bsy:=f(a_s^\bsy)$.

\subsection{Finite element discretization}
The PDEs (\ref{eq:PDE_state_sampled})--(\ref{eq:PDE_adjoint_sampled}) are assumed to be solved using a finite element (FE) method. 
Let $h$ be the maximum mesh diameter of the FE grid. The FE solutions of the state and adjoint are denoted as $u_{h,s}^\bsy$ and $q_{h,s}^\bsy$ respectively and defined as
\begin{align}
\int_D a_s^\bsy \nabla u_{h,s}^\bsy \cdot \nabla v_h \, \mathrm dx &= \int_D z v_h \,\mathrm dx,\, \quad \forall v\in V_h \subset H_0^1(D)\, \\
\int_D a_s^\bsy \nabla q_{h,s}^\bsy \cdot \nabla v_h \,\mathrm dx &= \int_D (u_{h,s}^\bsy-g)\,v_h \,\mathrm dx,\,\quad \forall v_h \in V_h \subset H_0^1(D)\,.\label{eq:weakFE}
\end{align}
Let $V_h \in H_0^1(D)$ denote the standard FE space of continuous piecewise linear functions that vanish on the boundary $\partial\!D$ and we have $u_{h,s}^\bsy, q_{h,s}^\bsy \in V_h$.

\section{Quasi-Monte Carlo quadrature} \label{sec:QMC}
\newcommand{\meanDel}[1]{\ensuremath{\mathbb{E}_{\bs{\Delta}}[#1]}}
\newcommand{\varDel}[1]{\ensuremath{\mathbb{V}_{\bs{\Delta}}[#1]}}
QMC methods are equal weight quadrature rules integrating over the $s$-dimensional unit cube $[0,1]^s$. 
We are however interested in finding an approximation for $\mean{q_{h,s}(x,\bsy)}$ where $\bsy$ has a normal distribution. 
Therefore, it is necessary to perform the change of variables $\bsy = \bs{\Phi}^{-1}(\bs{\xi})$, with $\bs{\Phi}^{-1}(\und)$ the element-wise inverse cumulative normal distribution to obtain
\begin{equation}
\mean{q_{h,s}(x,\bsy)} = \int_{\mathbb{R}^s} q_{h,s}(x,\bsy)\mathrm d\bs{y} = \int_{[0,1]^s} q_{h,s}(x,\bs{\Phi}^{-1}(\bs{\xi}))\mathrm d\bs{\xi}.
\end{equation}
To approximate $\mean{q_{h,s}(x,\bsy)}$, we employ the $N$-point \emph{shifted rank-$1$ lattice rule} $\mathcal{Q}_N$ defined as 
\begin{equation} \label{eq:qmc_rule}
	\mathcal{Q}_N(q_{h,s}(x,\und);\bs{\Delta}) := \frac{1}{N}\sum_{i=1}^N q_{h,s}\left(x,\bs{\Phi}^{-1}\left(\operatorname{frac}\left(\frac{i\bs{z}}{N}+\bs{\Delta}\right)\right)\right),
\end{equation}
where $\bs{z} \in \mathbb{N}^s$ is a \emph{generating vector} and $\bs{\Delta} \in [0,1]^s$ is the \emph{shift}. The function $\operatorname{frac}(\bs{v})$ returns the fractional part for each component in a given vector $\bs{v}\in \mathbb{R}^s$.

For any a priori choice of the shift $\bs{\Delta}$, the rule (\ref{eq:qmc_rule}) is a biased estimator for $\mean{q_{h,s}(x,\bsy)}$. This bias can be removed by instead considering shifts that are uniformly distributed over $[0,1]^s$. 
The resulting QMC points $\bs{\xi}_i = \operatorname{frac}\left(\frac{i\bs{z}}{N}+\bs{\Delta}\right)$, $i=1,\ldots,N$ are then also uniformly distributed over the unit cube. The rule is then an unbiased estimator for $\mean{q_{h,s}(x,\bsy)}$ since
\begin{align*}
	\meanDel{\mathcal{Q}_{N}(q_{h,s}(x,\und);\bs{\Delta})} &= \int_{[0,1]^s} \frac{1}{N}\sum_{i=1}^{N} q_{h,s}\left(x,\bs{\Phi}^{-1}\left(\operatorname{frac}\left(\frac{i\bs{z}}{N}+\bs{\Delta}\right)\right)\right)\mathrm d\bs{\Delta} \\
	&=\frac{1}{N} \sum_{i=1}^{N} \int_{[0,1]^s} q_{h,s}\left(x,\bs{\Phi}^{-1}\left(\bs{\xi}_i\right)\right)\mathrm d\bs{\xi}_i
	= \mean{q_{h,s}(x,\bsy)}.
\end{align*}
The notation $\meanDel{\und}$ emphasizes that the expected value is taken w.r.t.~the random shifts.
By taking the sample average over $R$ samples of the random shift $\bs{\Delta}$, and therefore of $\mathcal{Q}_{N}(q_{h,s}(x,\und);\bs{\Delta})$, one obtains the \emph{randomly shifted lattice rule}
\begin{equation} \label{eq:randomly_shifted_lattice_rule}
\mathcal{Q}_{N,R}(q_{h,s}(x,\und)) := \frac{1}{R}\sum_{r=1}^R\mathcal{Q}_{N}(q_{h,s}(x,\und);\bs{\Delta}_r).
\end{equation}

Another purpose of the random shifts is to facilitate the error estimation. The randomly shifted lattice rule is stochastic, so its root mean square error (RMSE) can be defined as
\begin{equation}\label{eq:def_rmse}
	\varepsilon(\mathcal{Q}_{N,R}(q_{h,s})) := \sqrt{\meanDel{\|\mathcal{Q}_{N,R}(q_{h,s})-\mean{q}\|_{L^2(D)}^2}}.
\end{equation} 
Since the means $\mean{q_{h,s}}$ and $\mean{q}$ are deterministic, it is easily verified that the MSE $\varepsilon^2$ can be expressed as
\begin{align}
\meanDel{\norm{\mathcal{Q}_{N,R}(q_{h,s}) - \mean{q}}_{L^2(D)}^2} 
&= \meanDel{\norm{\mathcal{Q}_{N,R}(q_{h,s}) - \mean{q_{h,s}} + \mean{q_{h,s}} - \mean{q}}_{L^2(D)}^2} \nonumber \\
&=\underbrace{
	\meanDel{\norm{\mathcal{Q}_{N,R}(q_{h,s}) - \mean{q_{h,s}}}_{L^2(D)}^2}
}_\text{QMC quadrature error}
+
\underbrace{
	\norm{\mean{q_{h,s}-q}}_{L^2(D)}^2
}_\text{Bias}.
\label{eq:QMC_MSE_split}
\end{align}
The first term is due to the error incurred by the QMC quadrature. It is related to the variance of the randomly shifted lattice rule since 
\begin{align}
\meanDel{\norm{\mathcal{Q}_{N,R}(q_{h,s}) - \mean{q_{h,s}}}_{L^2(D)}^2} &= 
\int_D \meanDel{(\mathcal{Q}_{N,R}(q_{h,s}) - \mean{\mathcal{Q}_{N,R}(q_{h,s})})^2} \mathrm dx \nonumber \\
&=
\int_D
\varDel{ \mathcal{Q}_{N,R}(q_{h,s}) }
\mathrm dx 
= \int_D 
\frac{1}{R} \varDel{\mathcal{Q}_{N}(q_{h,s};\bs{\Delta})}
\mathrm dx ,
\label{eq:stochastic_error}
\end{align}
where we introduced the notation $\varDel{\und}$ for the variance w.r.t.~the random shifts.
The $R$ samples of the shift in (\ref{eq:randomly_shifted_lattice_rule}) allow the easy estimation
\begin{equation} \label{eq:sample_variance_shifts}
\varDel{ \mathcal{Q}_{N,R}(q_{h,s}) } =
\frac{1}{R} \varDel{\mathcal{Q}_{N}(q_{h,s};\bs{\Delta})} \approx \frac{1}{R(R-1)}\sum_{r=1}^{R}(\mathcal{Q}_{N}(q_{h,s};\bs{\Delta}_r)-\mathcal{Q}_{N,R}(q_{h,s}))^2.
\end{equation}
This QMC quadrature error depends on the number of QMC points $N$ and the generating vector $\bs{z}$ in (\ref{eq:qmc_rule}).
The second term in (\ref{eq:QMC_MSE_split}) is the bias w.r.t.~$\mean{q}$, due to the discretization error incurred by numerically solving the PDEs. It can be decreased by considering a finer discretization mesh width $h$.

The multilevel quasi-Monte Carlo (MLQMC) estimator for $\mathbb{E}[q]$ combines estimators of the form (\ref{eq:randomly_shifted_lattice_rule}) on a hierarchy of levels $\ell \in \{0,1,\ldots,L\}$, with level $0$ being the coarsest level and $L$ the finest. For each level, we consider a discretization mesh width $h_\ell$, with $h_{\ell} < h_{\ell-1}$, and corresponding spaces $V_{h_0} \subset V_{h_1} \subset \ldots \subset V_{h_L} \subset V = H_0^1(D)$ in which approximations $u_h$ for the state and $q_h$ for the adjoint exist.

Define $q_\ell:=q_{h_\ell,s_\ell}$, $\ell={0,\ldots,L}$. Using a telescopic sum and the linearity of the expected value operator, we observe that the expected value on the finest discretization level is equal to the expected value on the coarsest level plus a series of corrections, i.e.
\begin{align}\label{eq:telescopic_sum}
\mean{q_L} = \mean{q_0} + \sum_{\ell = 1}^L \mean{q_\ell - q_{\ell-1}} = \sum_{\ell = 0}^L \mean{q_\ell - q_{\ell-1}}\,,
\end{align}
where we follow the convention $q_{-1} := 0$. 
The multilevel quasi-Monte Carlo estimator for $\mathbb{E}[q]$ is obtained by estimating each of the terms in the right-hand side with a randomly shifted lattice rule (\ref{eq:randomly_shifted_lattice_rule}), yielding
\newcommand{\Qml}{\ensuremath{\mathcal{Q}_{\bs{N},\bs{R}}^{\text{ML}}}}
\begin{align*}
\Qml(q) := \sum_{\ell = 0}^L \mathcal{Q}_{N_\ell,R_\ell}(q_\ell - q_{\ell-1}) = \sum_{\ell = 0}^L \frac{1}{R_\ell} \sum_{r=1}^{R_\ell} \frac{1}{N_\ell} \sum_{i = 1}^{N_\ell} \big( q_\ell(\cdot, \bsy_\ell^{(i,r)}) - q_{\ell-1}(\cdot, \bsy_\ell^{(i,r)})\big)\,,
\end{align*}
where $\bsy_\ell^{(i,r)} := \bs{\Phi}^{-1}(\text{frac}(i\bsz_\ell N_\ell^{-1} + \boldsymbol{\Delta}_{\ell,r})) \in \mathbb{R}^{s_\ell}$, with $\bsz_\ell \in \mathbb{N}^{s_\ell}$ the generating vector on level $\ell$ and $s_\ell$ the stochastic dimension on level $\ell$. All random shifts $\bs{\Delta}_{\ell,r}$ are independent. Both $s_\ell$ and $\bs{z}_\ell$ are in general different from level to level. 

It is important that both terms 
$q_\ell(\cdot, \bsy_\ell^{\smash{(i,r)}})$ and $q_{\ell-1}(\cdot, \bsy_\ell^{\smash{(i,r)}})$ are evaluated for the same approximate realization $a_{s_\ell}(\und,\bsy_\ell^{\smash{(i,r)}})$ of the stochastic field. Note that if $s_{\ell-1} < s_\ell$, then $q_{\ell-1}(\cdot, \bsy_\ell^{\smash{(i,r)}}) = q_{h_{\ell-1},s_{\ell-1}}(\cdot, \bsy_\ell^{\smash{(i,r)}})$ is evaluated as stated by (\ref{eq:linint2}): first $a_{s_\ell}(\und,\bsy_\ell^{\smash{(i,r)}})$ is evaluated in the CE grid points corresponding to level $\ell-1$ and then $a_{s_{\ell-1}}(\und,\bsy_\ell^{\smash{(i,r)}})$ is formed by linear interpolation between those grid points. The quantity $q_{\ell-1}(\cdot, \bsy_\ell^{\smash{(i,r)}})$ is the adjoint solution corresponding to that interpolated diffusion coefficient $a_{s_{\ell-1}}(\und,\bsy_\ell^{\smash{(i,r)}})$. Now, in order to ensure $\meansmall{\Qml(q)} = \mean{q_L}$ through the telescopic sum (\ref{eq:telescopic_sum}), the distribution of $q_{\ell-1}(\cdot, \bsy_\ell^{\smash{(i,r)}})$ must equal the distribution of $q_{\ell-1}(\cdot, \bsy_{\ell-1}^{\smash{(i,r)}})$, and therefore the distribution of $a_{\ell}(\cdot, \bsy_{\ell-1}^{\smash{(i,r)}})$ equals the distribution of $a_{\ell-1}(\cdot, \bsy_{\ell-1}^{\smash{(i,r)}})$. As discussed in \S\ref{sec:stoch_field}, this necessitates that the uniform rectilinear grids involved in the CE sampling of the diffusion coefficient are nested. 
If we denote the $m_\ell$ point CE grid at level $\ell$ by $\{x^\ell_i\}_{i=1}^{m_\ell}$, we therefore must choose grids such that
	 $\{x^{0}_i\}_{i=1}^{m_0} \subseteq \{x^{1}_i\}_{i=1}^{m_1} \subseteq \ldots \subseteq \{x^L_i\}_{i=1}^{m_L}$ and therefore we also have $s_0 \leq s_1 \leq \ldots \leq s_L$.

\subsection{Error and cost}
Analogous to (\ref{eq:def_rmse}), and due to the independence of the random shifts used for each level, the RMSE of the MLQMC estimator can be shown to equal
\begin{equation} \label{eq:rmse_mlqmc}
\varepsilon(\Qml(q))^2:= \sum_{\ell=0}^L\mathcal{V}_\ell + \|\mean{q_{L}-q}\|^2_{L^2(D)},
\end{equation}
with 
\begin{equation} \label{eq:Vell}
\mathcal{V}_\ell := \int_D \varDel{\mathcal{Q}_{N_\ell,R_\ell}(q_{\ell} - q_{\ell-1})}\mathrm dx.
\end{equation}
Like in (\ref{eq:QMC_MSE_split}), the first term quantifies the quadrature errors of the QMC methods on all levels. They can be estimated using the sample variance of the $R_\ell$ samples as demonstrated in (\ref{eq:sample_variance_shifts}). The second term is the bias, which coincides with the single-level bias term in (\ref{eq:QMC_MSE_split}) for $h = h_L$.

The basic cost and convergence theorems are now presented following \cite{kuo2017multilevel}, but applied to our specific case where the circulant embedding method is used as opposed to the KL expansion. To that end, we first formulate a few general assumptions about the convergence rate of the PDE discretization, the RMSE of the QMC estimator and the computational cost of generating samples. The notation $a \lesssim b$ implies that $a \leq cb$ with $c>0$ some constant independent of $a$ and $b$, and $a \eqsim b$ as $a \lesssim b$ and $b \lesssim a$. 

Let $M_\ell:=\dim(V_{h_\ell})$ denote the number of degrees of freedom associated with the FE approximation of the PDE at level $\ell$. We assume that
\begin{assumption} \label{as:Mhs}
	$M_\ell \simeq h_\ell^{-d}$ and $s_\ell \lesssim M_\ell \log M_\ell$.
\end{assumption}
The first part of the assumption holds for a variety of mesh families, including locally or anisotropically refined meshes \cite{graham2018circulant}.
The second part here states that the stochastic dimension $s_\ell$ at level $\ell$ is proportional to $M_\ell \log M_\ell$, which is a natural assumption to make if one uses the CE method; see \cite{graham2018analysis} for a detailed analysis. If no padding is required in the CE method, then $s_\ell \simeq M_\ell$. In either case, the assumption allows the CE grid to contain all the quadrature points in the FE triangulation. Even if the FE grid is not a subgrid of the CE grid, the assumption allows the mesh width of the CE grid to be proportional to the FE mesh width, which is a straightforward choice in practice and allows for a comfortable analysis in the remainder of the paper.

We assume that the hierarchy of discretization levels for the PDE (\ref{eq:PDE}) has a weak order of convergence $\rho$, i.e.,
\begin{assumption} \label{as:rho}
	$\norm{\mean{q_{\ell} - q}}_{L^2(D)} \lesssim h_\ell^{\rho}$ for some constant $\rho > 0$. 
\end{assumption}
This assumption and the next two are stated in terms of $h_\ell$. Due to Assumption \ref{as:Mhs}, any possible dependence on $s_\ell$ is incorporated into a dependence on $h_\ell$. For elliptic problems such as the Laplace problem described in this paper, one expects $\rho = 2$, at least for diffusion coefficients that are smooth enough. However, the simultaneous refining of the random field itself leads to an order $\rho = 1$. 


Next we make an assumption on the variance of the QMC estimator, the justification of which is the subject of the analysis in the later sections of this paper.
\begin{assumption}\label{assump:Var}
	$\mathcal{V}_\ell \lesssim R_\ell^{-1}N_\ell^{-1/\lambda}h_{\ell}^\varphi$ for some constants $\lambda,\varphi > 0$, with $\mathcal{V}_\ell$ as defined in (\ref{eq:Vell}).
\end{assumption}
Usually one expects $\varphi  = 2\rho$. For a standard Monte Carlo method, one would have $\lambda = 1$, i.e., the variance would be inversely proportional to the number of Monte Carlo samples. We will see that the QMC method yields a better rate of convergence. The theoretical results in Section \ref{sec:analysis} show that $\lambda \in (1/2,1]$ can be attained. 

Finally, let the cost to compute a sample $q_\ell(\cdot, \bsy_\ell)$ with $\bsy_\ell \in \mathbb{R}^{s_\ell}$ on level $\ell$ be denoted as $\mathcal{C}_\ell$. We assume
\begin{assumption} \label{as:kappa}
	The computational cost for a single sample, denoted $\mathcal{C}_\ell$, satisfies
	$\mathcal{C}_\ell \lesssim h_\ell^{-\kappa} \label{eq:sample_cost}$
	for some constant $\kappa$. 
\end{assumption}
The cost $\mathcal{C}_\ell$ consists of two parts. First, there is the cost $\mathcal{C}_\ell^{\text{FE}}$ of the FE solver. If a multigrid solver is used, this cost is typically of the order $\mathcal{O}(M_\ell \log M_\ell)$. Next, there is a cost $\mathcal{C}_\ell^{\text{CE}}$ of $\mathcal{O}(s_\ell \log s_\ell)$ operations for generating the diffusion coefficient sample through the CE method. Due to Assumption \ref{as:Mhs}, $\mathcal{C}_\ell^{\text{CE}} = \mathcal{O}(M_\ell (\log M_\ell)^2)$. Assumption \ref{as:kappa} then holds with $\kappa = d+\delta$ for an arbitrary small $\delta > 0$.

Supposing that constants $\lambda,\rho,\varphi,\kappa > 0$ exist such that Assumptions \ref{as:Mhs}--\ref{as:kappa} hold for $\ell = 0,\ldots,L$, it follows immediately from the (\ref{eq:rmse_mlqmc}) and the discussion of the cost above that
\begin{align} \label{eq:qmc_rmse_cost}
	\varepsilon(\Qml(q))^2 \lesssim h_L^{2\rho} + \sum_{\ell=0}^{L}R_\ell^{-1}N_\ell^{-1/\lambda}h_{\ell}^\varphi \;\; \text{ and }\;\;
	\mathcal{C}(\Qml(q)) \lesssim  \sum_{\ell=0}^{L}R_\ell N_\ell h_\ell^{-\kappa}.
\end{align}
\begin{theorem}\label{th:ML}
	Suppose that constants $\lambda,\rho,\varphi,\kappa > 0$ exist such that Assumptions \ref{as:Mhs}--\ref{as:kappa} hold for $\ell = 0,\ldots,L$. If the meshes have mesh widths $h_\ell \simeq q^{-\ell}$ for some $q>1$ and the choice $R_\ell = R$ is made for some $R \in \mathbb{R}$, then for any $\epsilon > 0$, there exists a choice of $L$ and of $N_0,\ldots,N_L$ such that 
	\begin{equation}
	\varepsilon(\Qml(q))^2 \lesssim \epsilon^2 \text{ and } \mathcal{C}(\Qml(q)) \lesssim 
	\begin{cases} 
	\epsilon^{-2\lambda} & \text{ if } \varphi \lambda > \kappa,\\
	\epsilon^{-2\lambda}(\log_2 \epsilon^{-1})^{\lambda+1} & \text{ if } \varphi \lambda = \kappa,\\
	\epsilon^{-2\lambda-(\kappa-\varphi\lambda)/\rho} & \text{ if } \varphi \lambda < \kappa.
	\end{cases} 
	\end{equation}
\end{theorem}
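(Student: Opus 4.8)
This is a standard multilevel complexity optimization, and the plan is to treat the number of samples $N_\ell$ (with $R_\ell = R$ fixed) as continuous variables and minimize the total cost subject to the error constraint. From \eqref{eq:qmc_rmse_cost} we have two competing quantities: the squared RMSE bounded by $h_L^{2\rho} + \sum_{\ell} R^{-1} N_\ell^{-1/\lambda} h_\ell^{\varphi}$, and the cost bounded by $\sum_\ell R N_\ell h_\ell^{-\kappa}$. First I would split the error budget $\epsilon^2$ into a bias part and a quadrature part: choose $L$ so that the bias $h_L^{2\rho} \simeq q^{-2\rho L} \lesssim \epsilon^2/2$, which forces $L \simeq \rho^{-1}\log_q(\epsilon^{-1})$ up to constants, and then demand $\sum_\ell R^{-1} N_\ell^{-1/\lambda} h_\ell^\varphi \lesssim \epsilon^2/2$.

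Next I would solve the constrained optimization by Lagrange multipliers. Minimizing $\sum_\ell N_\ell h_\ell^{-\kappa}$ subject to $\sum_\ell N_\ell^{-1/\lambda} h_\ell^\varphi = $ const yields the optimal allocation
\begin{equation*}
N_\ell \;\eqsim\; \left( h_\ell^{\varphi + \kappa} \right)^{\lambda/(\lambda+1)}
\;=\; h_\ell^{\,\lambda(\varphi+\kappa)/(\lambda+1)},
\end{equation*}
obtained by setting the derivative of the Lagrangian to zero (the exponents follow from balancing $N_\ell^{-1/\lambda} h_\ell^\varphi$ against the multiplier times $N_\ell h_\ell^{-\kappa}$). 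Substituting this back, the total cost scales like $\Bigl(\sum_\ell h_\ell^{(\varphi\lambda - \kappa)/(\lambda+1)}\Bigr)^{\lambda+1}$ times the appropriate power of the error budget. The key structural observation is that the geometric sum $\sum_{\ell=0}^L q^{-\ell(\varphi\lambda-\kappa)/(\lambda+1)}$ behaves in three qualitatively different ways according to the sign of $\varphi\lambda - \kappa$: it converges (dominated by the finest level, giving the $\epsilon^{-2\lambda}$ rate) when $\varphi\lambda > \kappa$; it grows linearly in $L \eqsim \log_q \epsilon^{-1}$ (producing the logarithmic factor $(\log_2\epsilon^{-1})^{\lambda+1}$) when $\varphi\lambda = \kappa$; and it is dominated by the coarsest-versus-finest level geometrically, contributing the extra factor $\epsilon^{-(\kappa-\varphi\lambda)/\rho}$ when $\varphi\lambda < \kappa$.

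Finally I would carry out the bookkeeping to convert the real-valued optimal $N_\ell$ into admissible choices (rounding up to integers, which only changes constants) and verify that the rounded allocation still satisfies both $\varepsilon(\Qml(q))^2 \lesssim \epsilon^2$ and the claimed cost bound. The main obstacle I anticipate is not the optimization itself, which is routine, but rather the careful case analysis of the geometric sum together with tracking how $L$ enters: in the critical and subcritical cases one must substitute $L \eqsim \rho^{-1}\log_q\epsilon^{-1}$ and confirm the exponents combine to give exactly $\epsilon^{-2\lambda}(\log_2\epsilon^{-1})^{\lambda+1}$ and $\epsilon^{-2\lambda-(\kappa-\varphi\lambda)/\rho}$ respectively. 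One must also confirm that the $R_\ell = R$ constraint and the fixed number of shifts do not interfere with the allocation — since $R$ appears only as a multiplicative constant in both cost and variance, it is absorbed harmlessly, which is the reason the theorem can fix $R_\ell = R$ without loss.
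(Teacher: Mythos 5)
Your proposal is correct and follows essentially the same route as the paper, which gives no self-contained proof but defers to the standard sample-allocation argument of \cite[Theorem 1 and Corollary 2]{kuo2017multilevel}: Lagrange optimization of the $N_\ell$ under the quadrature-error constraint yields $N_\ell \eqsim C\, h_\ell^{\lambda(\varphi+\kappa)/(\lambda+1)}$, both the cost and variance contributions then become proportional to $h_\ell^{(\varphi\lambda-\kappa)/(\lambda+1)}$, and the three regimes follow from the behaviour of the resulting geometric sum combined with $L \eqsim \rho^{-1}\log_q\epsilon^{-1}$. One minor slip that does not affect the bound: when $\varphi\lambda>\kappa$ the sum $\sum_\ell h_\ell^{(\varphi\lambda-\kappa)/(\lambda+1)}$ is dominated by the \emph{coarsest} level $\ell=0$ (not the finest), but all that is used is that it is $O(1)$; and when rounding $N_\ell$ up to integers in the regime $\varphi\lambda<\kappa$ one should note that the additive contribution $\sum_\ell h_\ell^{-\kappa}\eqsim h_L^{-\kappa}$ is absorbed into the claimed rate precisely when $\varphi\le 2\rho$, the condition implicitly assumed here as in \cite{kuo2017multilevel}.
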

The proof is analogous to the one presented in \cite[Corollary 2]{kuo2017multilevel}. In fact, Theorem \ref{th:ML} can be understood as equivalent to \cite[Theorem 1 and Corollary 2]{kuo2017multilevel} with the constants $\alpha'$ and $\beta'$ defined there equal to $-\infty$ and the dimension $d$ there, due to the assumptions in this paper being slightly different, replaced by our $\kappa$.

\section{Numerical results}
\label{sec:numerical}
This section presents numerical evidence that the MLQMC method outperforms the MLMC method and the single level QMC and MC methods for gradient calculations involving the elliptic model problem. Assumption \ref{assump:Var} is verified numerically to hold for $\lambda$ smaller than $1$, thus outperforming standard Monte Carlo methods. Certain practical aspects and implementational details are of course also discussed.

\subsection{Problem specification}
We consider a spatial domain $D = (0,1)^2$. 
The gradient is calculated for the target function 
$$ g(x) = \begin{cases} 
1 & x \in [0.25, 0.75] \times [0.25, 0.75]\\
0 & \text{otherwise}
\end{cases} 
$$
in the control point $z(x) = 5(1-\cos(2\pi x_1))(1-\cos(2\pi x_2))$, see Figure \ref{fig:gzgrad}.
The stochastic diffusion coefficient has a Mat\'ern covariance 
\begin{equation}
\label{eq:Matern}
r_{\rm cov}(x,x') = \sigma^2\frac{2^{1-\nu}}{\Gamma(\nu)}\Big(\sqrt{2\nu}\frac{\|x-x'\|_2}{\lambda_c}\Big)^\nu K_\nu\Big(\sqrt{2\nu}\frac{\|x-x'\|_2}{\lambda_c}\Big),
\end{equation}
where $\Gamma$ is the gamma function and $K_\nu$ is the modified Bessel function of the second kind. Here, $\sigma^2$ is the variance, $\lambda_c$ the correlation length and $\nu$ a parameter determining the smoothness of the resulting field samples. 
We choose $\sigma^2 = 0.1$, $\lambda_c = 1$ and consider two values for $\nu$. Problem 1 has $\nu = 0.5$, which yields an exponential covariance, and Problem 2 has $\nu = 2.5$. These particular parameters were also investigated in a MLQMC context in \cite{kuo2017multilevel}. 
\begin{figure}
	\centering
	\hspace{-1.7cm}
	\begin{subfigure}{.32\textwidth}
		\centering
		\includegraphics[width=1.2\linewidth]{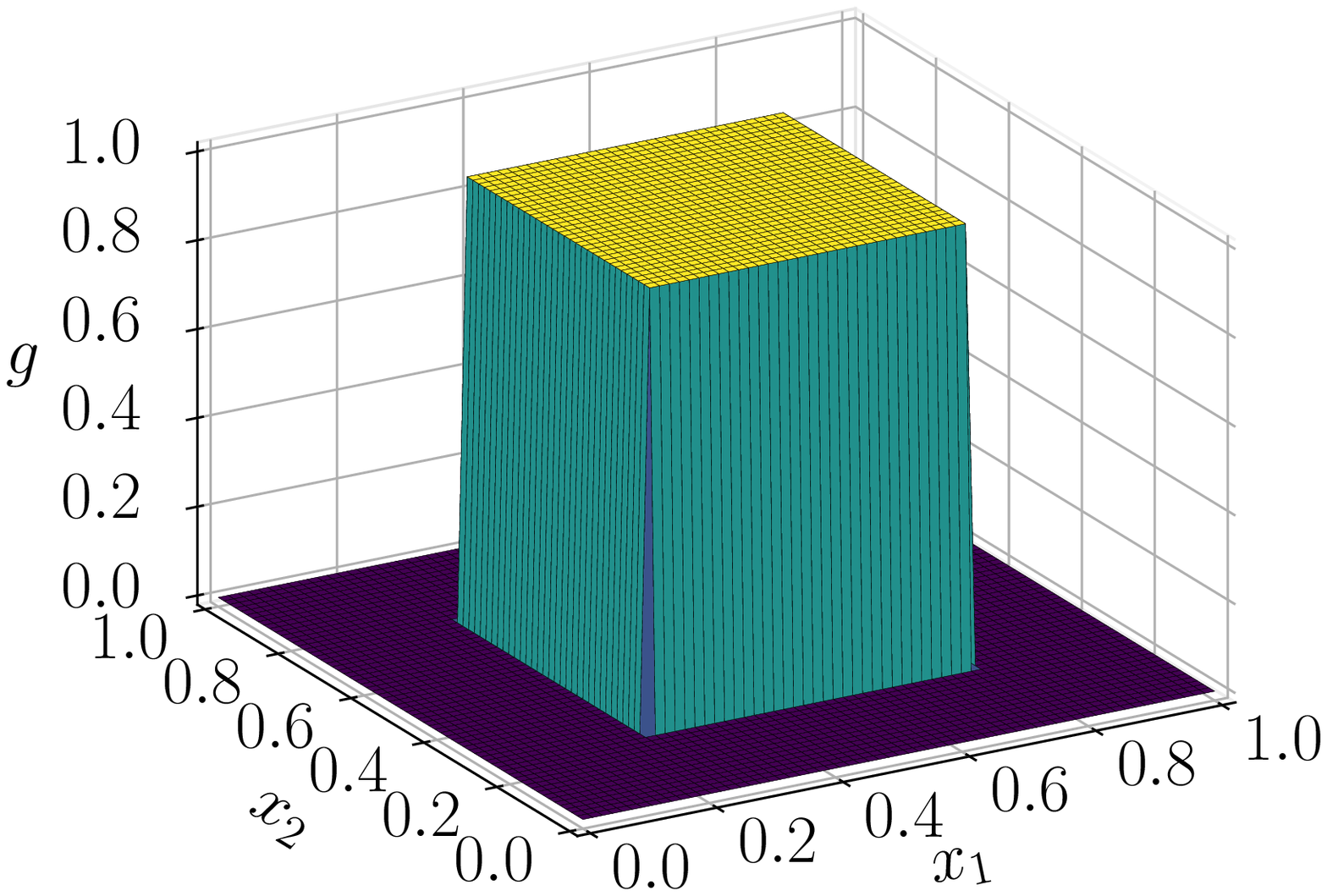}
	\end{subfigure}
	\hspace{0.0cm}
	\begin{subfigure}{.32\textwidth}
		\centering
		\includegraphics[width=1.2\linewidth]{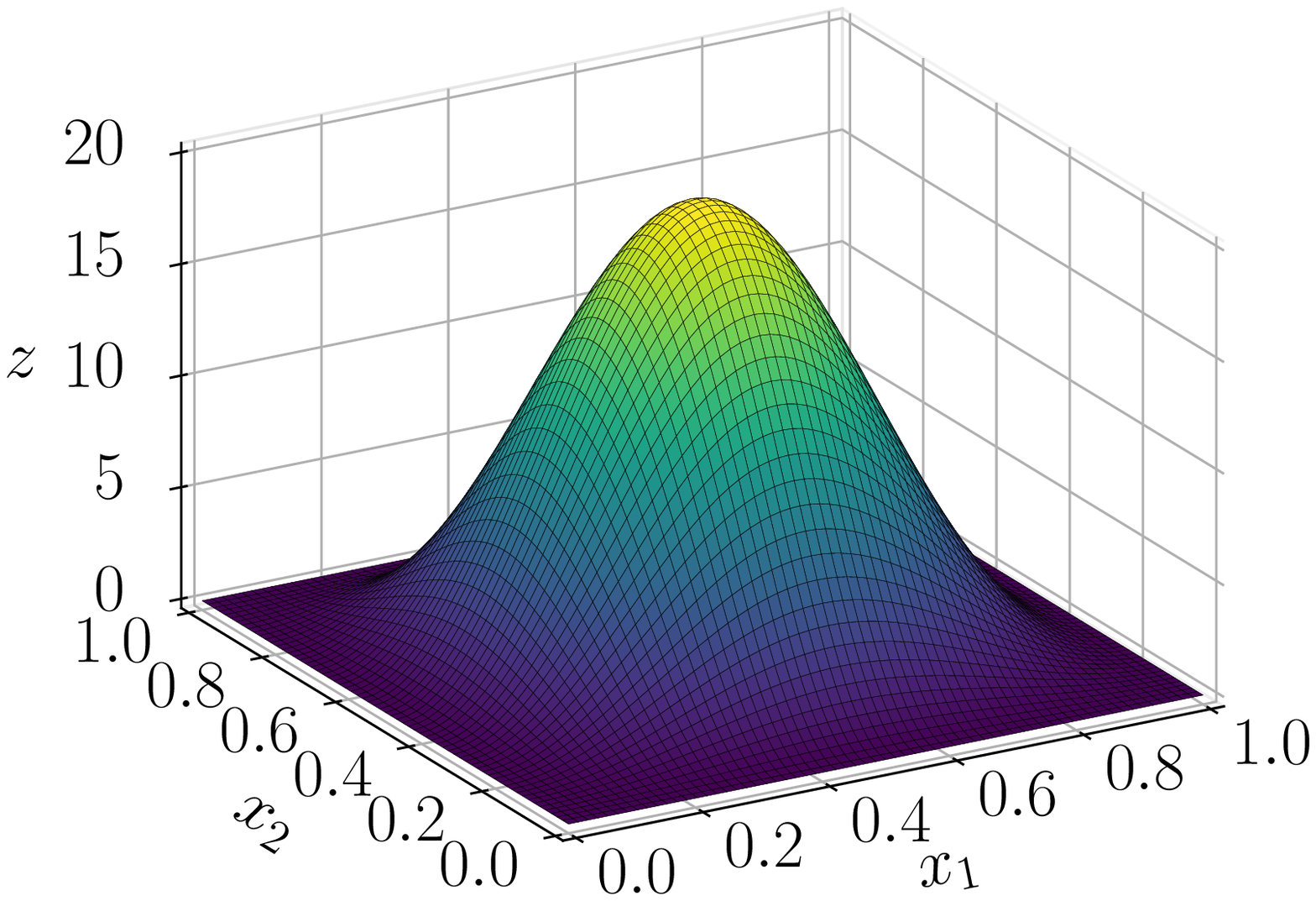}
	\end{subfigure}
	\hspace{0.0cm}
	\begin{subfigure}{.32\textwidth}
		\centering
		\includegraphics[width=1.2\linewidth]{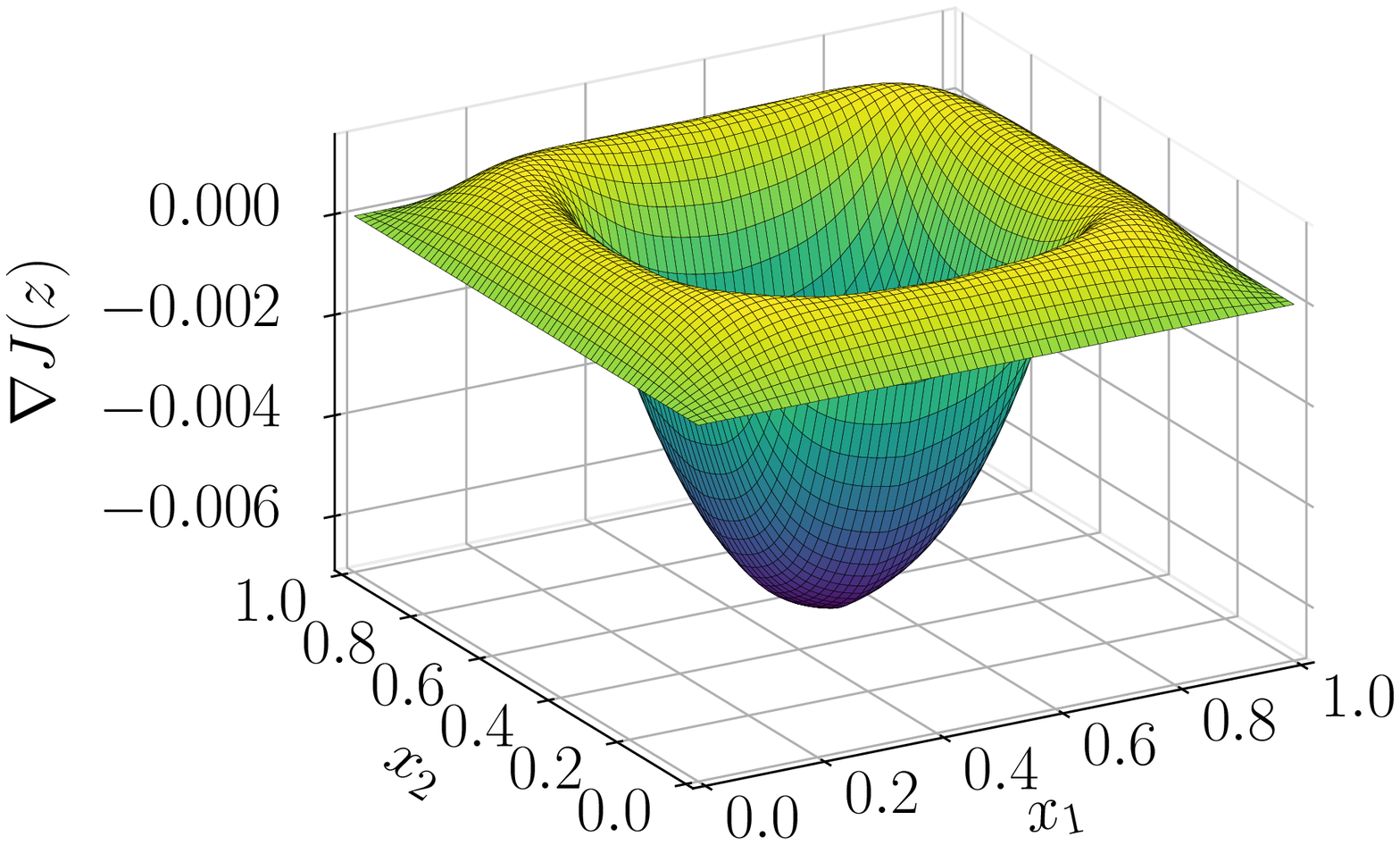}
	\end{subfigure}
	\caption{Target $g$, control $z$ and gradient $\nabla J(z)$.}
	\label{fig:gzgrad}
\end{figure}

\subsection{Level definitions, CE and FE details}
We consider $7$ levels for which the FE grids are regular rectangular grids having size $(2^{2+\ell}+1) \times (2^{2+\ell}+1), \ell = 0,\ldots,6$, including the boundary points. 
For the CE, we consider coarser regular rectangular grids of size $(2^\ell+1) \times (2^\ell+1), \ell = 0,\ldots,6$. 
The resulting FE and stochastic CE dimensions are shown in Figure \ref{fig:stoch_dim}. The stochastic dimension is different for the two model problems since the different stochastic field parameters necessitate a different amount of padding in the CE method. The resulting computational single threaded performance on an Intel\textregistered\hspace{3pt} Core i5--4690K CPU @ 3.50GHz is shown in Figure \ref{fig:N_levels}. These costs are only important relative to one another; the scaling of the figure has no further consequence. The CE and FE costs are comparable, which is the reason for choosing the CE grid slightly coarser than the FE grid.
\definecolor{myblue}{rgb}{0.00000,0.44700,0.74100}%
\definecolor{myred}{rgb}{0.85000,0.32500,0.09800}%
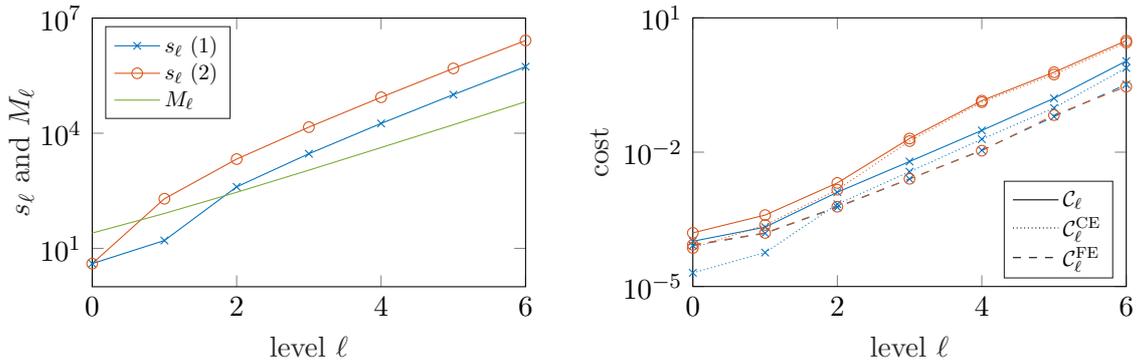
\begin{figure}
	\centering
	\begin{subfigure}[b]{0.475\textwidth}
		\centering
		\sethw{0.5}{0.8}
		\hspace{-0.1cm}
%
%
\definecolor{mycolor1}{rgb}{0.00000,0.44700,0.74100}%
\definecolor{mycolor2}{rgb}{0.85000,0.32500,0.09800}%
\definecolor{mycolor3}{rgb}{0.92900,0.69400,0.12500}%
\definecolor{mycolor4}{rgb}{0.49400,0.18400,0.55600}%
\definecolor{mycolor5}{rgb}{0.46600,0.67400,0.18800}%
\begin{tikzpicture}
\begin{axis}[%
width=\figurewidth,
height=\figureheight,
at={(0\figurewidth,0\figureheight)},
scale only axis,
xmin=0,
xmax=6,
xlabel style={font=\color{white!15!black}},
xlabel={level $\ell$},
ymode=log,
ymin=1e0,
ymax=1e7,
yminorticks=true,
ylabel style={font=\color{white!15!black}, yshift=-0.3cm},
ylabel={$s_\ell$ and $M_\ell$},
axis background/.style={fill=white},
legend style={nodes={scale=0.75, transform shape}, legend cell align=left, align=left, draw=white!15!black},
legend pos=north west,
mark options=solid
]
\addplot [color=mycolor1, mark=x]
  table[row sep=crcr]{%
0  4 \\
1  16 \\
2  400 \\
3  2916 \\
4  17956 \\
5  101124 \\
6  541696 \\
};
\addlegendentry{$s_\ell$ (1)}

\addplot [color=mycolor2, mark=o]
table[row sep=crcr]{%
0  4 \\
1  196 \\
2  2116 \\
3  14400 \\
4  86436 \\
5  487204 \\
6  2604996 \\
};
\addlegendentry{$s_\ell$ (2)}

\addplot [color=mycolor5]
table[row sep=crcr]{%
0  25 \\
1  81 \\
2  289 \\
3  1089 \\
4  4225 \\
5  16641 \\
6  66049 \\
};
\addlegendentry{$M_\ell$}

\end{axis}
\end{tikzpicture}%
		\vspace{-0.6cm}
		\caption{Stochastic dimension $s_\ell$ and FE dimension $M_\ell$ as a function of level $\ell$. $M_\ell$ is the same for both problems.}
		\label{fig:stoch_dim}
		\vspace{-0.2cm}
	\end{subfigure}
	\hspace*{\fill}
	\begin{subfigure}[b]{0.475\textwidth}	
		\centering
		\sethw{0.5}{0.8}
		\hspace{-0.3cm}
%
%
\definecolor{mycolor1}{rgb}{0.00000,0.44700,0.74100}%
\definecolor{mycolor2}{rgb}{0.85000,0.32500,0.09800}%
\definecolor{mycolor3}{rgb}{0.92900,0.69400,0.12500}%
\definecolor{mycolor4}{rgb}{0.49400,0.18400,0.55600}%
\definecolor{mycolor5}{rgb}{0.46600,0.67400,0.18800}%
\begin{tikzpicture}
\begin{axis}[%
width=\figurewidth,
height=\figureheight,
at={(0\figurewidth,0\figureheight)},
scale only axis,
xmin=0,
xmax=6,
xlabel style={font=\color{white!15!black}},
xlabel={level $\ell$},
ymode=log,
ymin=1e-5,
ymax=1e1,
yminorticks=true,
ylabel style={font=\color{white!15!black}},
ylabel={cost},
axis background/.style={fill=white},
legend style={nodes={scale=0.75, transform shape}, legend cell align=left, align=left, draw=white!15!black},
legend pos=south east,
mark options=solid
]

\addplot [color=black]
table[row sep=crcr]{%
	1  1 \\
};
\addlegendentry{$\mathcal{C}_\ell$}

\addplot [color=black, densely dotted]
table[row sep=crcr]{%
	1  1 \\
};
\addlegendentry{$\mathcal{C}_\ell^{\text{CE}}$}

\addplot [color=black, dashed]
table[row sep=crcr]{%
	1  1 \\
};
\addlegendentry{$\mathcal{C}_\ell^{\text{FE}}$}

\addplot [color=mycolor1, mark=x]
table[row sep=crcr]{%
0  0.00010252916 \\
1  0.00021143995999999997 \\
2  0.0012753025999999998 \\
3  0.006185962960000001 \\
4  0.03058119652 \\
5  0.15966781064000002 \\
6  1.0857596167999999 \\
};
\label{fig:costs:total1}

\addplot [color=mycolor1, densely dotted, mark=x]
table[row sep=crcr]{%
0  2.0222559999999992e-5 \\
1  5.7294919999999994e-5 \\
2  0.0006887577199999999 \\
3  0.0036651469600000003 \\
4  0.019516001839999998 \\
5  0.09731702380000001 \\
6  0.76069587716 \\
};
\label{fig:costs:CE1}

\addplot [color=mycolor1, dashed, mark=x]
table[row sep=crcr]{%
0  8.230660000000001e-5 \\
1  0.00015414503999999998 \\
2  0.00058654488 \\
3  0.0025208160000000004 \\
4  0.011065194680000001 \\
5  0.06235078684000001 \\
6  0.32506373963999996 \\
};
\label{fig:costs:FE1}

\addplot [color=mycolor2, mark=o]
table[row sep=crcr]{%
0  0.00015769308 \\
1  0.00039354708000000006 \\
2  0.00205690556 \\
3  0.020219314839999995 \\
4  0.14146659584 \\
5  0.60997781924 \\
6  3.0957620612799994 \\
};
\label{fig:costs:total2}

\addplot [color=mycolor2, densely dotted, mark=o]
table[row sep=crcr]{%
0  7.240039999999999e-5 \\
1  0.00023643976000000002 \\
2  0.0014517775999999998 \\
3  0.017677948479999997 \\
4  0.13066323944 \\
5  0.5426837021600001 \\
6  2.8050291027999994 \\
};
\label{fig:costs:CE2}

\addplot [color=mycolor2, dashed, mark=o]
table[row sep=crcr]{%
0  8.529267999999999e-5 \\
1  0.00015710732000000002 \\
2  0.00060512796 \\
3  0.00254136636 \\
4  0.010803356399999998 \\
5  0.06729411707999998 \\
6  0.29073295848 \\
};
\label{fig:costs:FE2}
\end{axis}
\end{tikzpicture}%
		\vspace{-0.6cm}
		\caption{At each level, the CE sampling cost $\mathcal{C}_\ell^{\text{CE}}$, the FE cost $\mathcal{C}_\ell^{\text{FE}}$ and their sum $\mathcal{C}_\ell$, measured in run time.}
		\label{fig:N_levels}
		\vspace{-0.2cm}
	\end{subfigure}
	\caption{CE and FE details. Problem 1 is marked by blue $\color{myblue}\times$, Problem 2 by red $\color{myred}\circ$.}
\end{figure}

As indicated in (\ref{eq:rmse_mlqmc}), the RMSE is composed of a variance term, due to the QMC quadrature error, and a bias term due to the FE discretization. 
The maximum level $L$ determines the bias.
For the numerical experiments in this paper however, we make abstraction of the FE error and study only the QMC quadrature error. The levels we use and thus $L$ are fixed. This does not fundamentally alter the computational cost for a multilevel methods (MLQMC or MLMC), since the number of samples is small on any additional fine levels.
Furthermore, in a context of optimization, fixing the levels is a natural thing to do since it allows an optimization algorithm access to gradients at a known and consistent discretization level, independent of the requested tolerance $\epsilon$, which, for performance reasons, may differ from optimization step to optimization step \cite{vanbarel2019robust}. 



\subsection{QMC details}
We use $R = R_\ell = 10$ random shifts for the single level QMC estimator, as well as for each level in the MLQMC estimator. We use an embedded lattice rule with a generating vector that can be found online at \cite[lattice-32001-1024-1048576.3600.txt]{kuo_lattice}. This rule works optimally for a number of QMC points $N_\ell \in [2^{10},2^{20}] = [1024,1048576]$. Note that this lattice rule is not specifically tuned to the problem at hand, as one could do by incorporating information about certain constants in \S\ref{sec:analysis}. 
Even though there is thus no theoretical justification to use this particular lattice rule, numerical experiments in \cite{graham2015} and \cite{kuo2017multilevel} show that such generic lattice rules have comparable performance. An issue is that the generating vector provided here has length $3600$, making it only usable for integrals of dimension up to $3600$. Due to the circulant embedding method, the stochastic dimension $s_\ell$ grows with $\ell$, see Assumption \ref{as:Mhs}. In the experiments that follow, a stochastic dimension in the millions is not uncommon, see Figure \ref{fig:stoch_dim}. 
The construction of a custom lattice rule tuned to our problem with POD weights (see \S\ref{sec:analysis}) for all stochastic dimensions is not feasible as the cost of constructing the generating vector using a CBC algorithm scales as $\mathcal{O}(s_\ell^2N_\ell+ s_\ell N_\ell\log{N_\ell})$, with $s_\ell$ the stochastic dimension on level $\ell$, see e.g., \cite{graham2018circulant}.
Therefore, the generating vector \cite{kuo_lattice} is appended with as many as necessary independent uniformly distributed random integers between $1$ and $2^{20} - 1$. Before applying the QMC method, the stochastic dimensions are sorted from most important to least important. The most important dimensions are then handled by the first, high quality elements of the random vector. The importance of a stochastic dimension is taken to be proportional to the corresponding eigenvalue of the circulant matrix $C$, see \S\ref{sec:stoch_field}. As suggested in, e.g., \cite{kuo2017multilevel}, the optimal number of samples to take at each of the $L$ levels, given a tolerance on the QMC quadrature error $\epsilon$, is attained dynamically by Algorithm \ref{alg:qmc_N}. It ensures that $\mathcal{V}_\ell \simeq N_\ell \mathcal{C}_\ell$, i.e., it ensures that the computational effort required to further reduce the variance contribution $\mathcal{V}_\ell$ at any level is comparable.
\begin{algorithm}[H] 
	\caption{Determining $\bs{N} = (N_0, \ldots, N_L)$}
	\label{alg:qmc_N}
	\begin{algorithmic}[1]
		\State Set $N_0 = N_1 = \ldots = N_L = 1$.
		\State Estimate $\mathcal{V}_0, \ldots, \mathcal{V}_L$ using (\ref{eq:sample_variance_shifts})
		\If{$\sum_{\ell=1}^{L} \mathcal{V}_\ell > \epsilon^2$}
		\State Double $N_\ell$ at $\ell$ where $\mathcal{V}_\ell / (N_\ell\mathcal{C}_\ell)$ is largest.
		\EndIf
		\State (An algorithm with adaptive $L$ could estimate and check the bias here.)
	\end{algorithmic}
\end{algorithm}


\subsection{Results}
The performance for both problems is shown in Figure \ref{fig:prob_conv}. Clearly, the MLQMC method outperforms the other methods. Note that due to the fixed number of levels $L$, the MC and MLMC methods follow the typical convergence rate of $\mathcal{O}(\epsilon^{-2})$. If $L$ were not fixed, then smaller and smaller tolerances on $\epsilon$ would eventually prompt a refinement of the single grid at which all samples are taken, resulting in a sudden massive increase in computational cost. The rate at which the single level methods become more expensive with decreasing $\epsilon$ is thus underestimated in the results shown. This in contrast to the multilevel methods, for which an increase in $L$ would at most incur a moderate cost increase. The flat costs for the multilevel methods for large $\epsilon$ are due to warm-up samples.

Figure \ref{fig:MSE_levels} illustrates Assumption \ref{as:kappa}. Shown is $R_\ell \mathcal{V}_\ell$ since that quantity does not depend on the chosen number of shifts. Remark that of course the precision of the numerical estimation (\ref{eq:sample_variance_shifts}) of $\mathcal{V}_\ell$ does depend on $R_\ell$. Clearly, the variance contributions for each of the levels go down faster than the MC rate of $N_\ell^{-1}$. Furthermore, the variances decay with $\ell$ as some power of $h_\ell$. Curiously, for $\ell=0$, the variances take a large $N_0$ before their faster decay starts. Should this be a problem in practice, a method different from the QMC method could be used to estimate at the coarsest level, especially considering that the stochastic dimension there is very small ($4$ in this case), see Figure \ref{fig:stoch_dim}.
\begin{figure}
	\centering
	\begin{subfigure}[b]{0.475\textwidth}
		\centering
		\sethw{0.5}{0.8}
%
%
\definecolor{mycolor1}{rgb}{0.00000,0.44700,0.74100}%
\definecolor{mycolor2}{rgb}{0.85000,0.32500,0.09800}%
\definecolor{mycolor3}{rgb}{0.92900,0.69400,0.12500}%
\definecolor{mycolor4}{rgb}{0.49400,0.18400,0.55600}%
\definecolor{mycolor5}{rgb}{0.46600,0.67400,0.18800}%
\begin{tikzpicture}
\begin{axis}[%
width=\figurewidth,
height=\figureheight,
at={(0\figurewidth,0\figureheight)},
scale only axis,
xmode=log,
xmin=1e-6,
xmax=1e-3,
xlabel style={font=\color{white!15!black}},
xlabel={RMSE $\epsilon$},
ymode=log,
ymin=1e1,
ymax=1e5,
yminorticks=true,
ylabel style={font=\color{white!15!black}, yshift=-0.3cm},
ylabel={Cost $\mathcal{C}(\epsilon)$},
axis background/.style={fill=white},
legend style={nodes={scale=0.75, transform shape}, legend cell align=left, align=left, draw=white!15!black},
mark options=solid
]
\addplot [color=mycolor1, mark=x]
  table[row sep=crcr]{%
0.001  15.27672650882991 \\
0.0004641588833612782  15.326492080665023 \\
0.00021544346900318823  15.67153676573001 \\
0.0001  17.79855166993629 \\
4.641588833612782e-5  31.277589219426403 \\
2.1544346900318823e-5  104.16292488935383 \\
1.0e-5  204.93433896008844 \\
4.641588833612773e-6  491.7083296012214 \\
2.1544346900318865e-6  1666.6067982296613 \\
1.0e-6  4391.587511111746 \\
};
\addlegendentry{MLQMC}

\addplot [color=mycolor2, mark=x]
table[row sep=crcr]{%
0.001  20.0 \\
0.0004641588833612782  80.0 \\
0.00021544346900318845  320.0 \\
0.0001  1280.0 \\
4.641588833612782e-5  2560.0 \\
2.1544346900318823e-5  10240.0 \\
1.0e-5  40960.0 \\
};
\addlegendentry{QMC}

\addplot [color=mycolor3, mark=x]
table[row sep=crcr]{%
0.001  18 \\
0.0004641588833612782  18 \\
0.00021544346900318845  18.5 \\
0.0001  21.3 \\
4.641588833612782e-5  80.14172842419325 \\
2.1544346900318823e-5  264.287975722853 \\
1.0e-5  1221.3680041717066 \\
4.641588833612773e-6  5665 \\
};
\addlegendentry{MLMC}

\addplot [color=mycolor4, mark=x]
table[row sep=crcr]{%
	0.001  16 \\
	0.00046415888336127724  83 \\
	0.00021544346900318823  306 \\
	0.0001  1780 \\
	4.641588833612773e-5  8262 \\
	2.1544346900318823e-5  38348 \\
};
\addlegendentry{MC}

\addplot [color=black, dashed]
  table[row sep=crcr]{%
1e-3  1e0 \\
1e-6  1e3 \\
};

\addplot [color=black, dashed]
table[row sep=crcr]{%
1e-3  1e2 \\
1e-6  1e8 \\
};

%

%
%

%

\addplot [color=black]
table[row sep=crcr]{%
	3e-6  0.5e2 \\
	9e-6  0.5e2 \\
	3e-6  2.3e2 \\ 
	3e-6  0.5e2\\
};
\node[] at (axis cs: 4.5e-6,0.7e2) {\small $1.39$};

\end{axis}
\end{tikzpicture}%
		\caption{Problem 1: $\nu= 0.5, \sigma^2 = 0.1, \lambda_c = 1$.}
		\label{fig:prob1_conv}
	\end{subfigure}
	\hspace{.03\textwidth}
	\begin{subfigure}[b]{0.475\textwidth}
		\centering
		\sethw{0.5}{0.8}
%
%
\definecolor{mycolor1}{rgb}{0.00000,0.44700,0.74100}%
\definecolor{mycolor2}{rgb}{0.85000,0.32500,0.09800}%
\definecolor{mycolor3}{rgb}{0.92900,0.69400,0.12500}%
\definecolor{mycolor4}{rgb}{0.49400,0.18400,0.55600}%
\definecolor{mycolor5}{rgb}{0.46600,0.67400,0.18800}%
\begin{tikzpicture}
\begin{axis}[%
width=\figurewidth,
height=\figureheight,
at={(0\figurewidth,0\figureheight)},
scale only axis,
xmode=log,
xmin=1e-6,
xmax=1e-3,
xlabel style={font=\color{white!15!black}},
xlabel={RMSE $\epsilon$},
ymode=log,
ymin=1e1,
ymax=1e5,
yminorticks=true,
ylabel style={font=\color{white!15!black}, yshift=-0.3cm},
ylabel={Cost $\mathcal{C}(\epsilon)$},
axis background/.style={fill=white},
legend style={nodes={scale=0.75, transform shape}, legend cell align=left, align=left, draw=white!15!black},
mark options=solid
]
\addplot [color=mycolor1, mark=o]
  table[row sep=crcr]{%
0.001  15.27672650882991 \\
0.0004641588833612782  15.303528837569296 \\
0.00021544346900318823  16.548070593817833 \\
0.0001  19.159077314693242 \\
4.641588833612782e-5  39.145864941451194 \\
2.1544346900318823e-5  109.670159306481 \\
1.0e-5  222.40482301192293 \\
4.641588833612773e-6  619.2314451136924 \\
2.1544346900318865e-6  2274.3399905388483 \\
1.0e-6  6501.079695126618 \\
};
\addlegendentry{MLQMC}

\addplot [color=mycolor2, mark=o]
table[row sep=crcr]{%
0.001  40.0 \\
0.0004641588833612782  80.0 \\
0.00021544346900318845  320.0 \\
0.0001  1280.0 \\
4.641588833612782e-5  5120.0 \\
2.1544346900318823e-5  20480.0 \\
1.0e-5  81920.0 \\
};
\addlegendentry{QMC}

\addplot [color=mycolor3, mark=o]
table[row sep=crcr]{%
0.001  18 \\
0.0004641588833612782  18 \\
0.00021544346900318845  18 \\
0.0001  23.5 \\
4.641588833612782e-5  96.62617614539647 \\
2.1544346900318823e-5  437.2993339105831 \\
1.0e-5  2029.23823076504 \\
4.641588833612773e-6  9414 \\
};
\addlegendentry{MLMC}

\addplot [color=mycolor4, mark=o]
table[row sep=crcr]{%
0.001  37.0 \\
0.00046415888336127724  169.0 \\
0.00021544346900318823  784.0 \\
0.0001  3635.0 \\
4.641588833612772e-5  16870.0 \\
2.1544346900318823e-5  78304.0 \\
};
\addlegendentry{MC}

\addplot [color=black, dashed]
  table[row sep=crcr]{%
1e-3  1e0 \\
1e-6  1e3 \\
};

\addplot [color=black, dashed]
table[row sep=crcr]{%
1e-3  1e2 \\
1e-6  1e8 \\
};

%

\addplot [color=black]
table[row sep=crcr]{%
	3e-6  0.5e2 \\
	9e-6  0.5e2 \\
	3e-6  2.15e2 \\ 
	3e-6  0.5e2\\
};
\node[] at (axis cs: 4.5e-6,0.7e2) {\small $1.33$};
\end{axis}
\end{tikzpicture}%
		\caption{Problem 2: $\nu= 2.5, \sigma^2 = 0.1, \lambda_c = 1$.}
		\label{fig:prob2_conv}
	\end{subfigure}
	\caption{Performance of the MLQMC method compared with the MLMC method and their single level counterparts. The cost is expressed in equivalent finest level PDE solves.}
	\label{fig:prob_conv}
\end{figure}
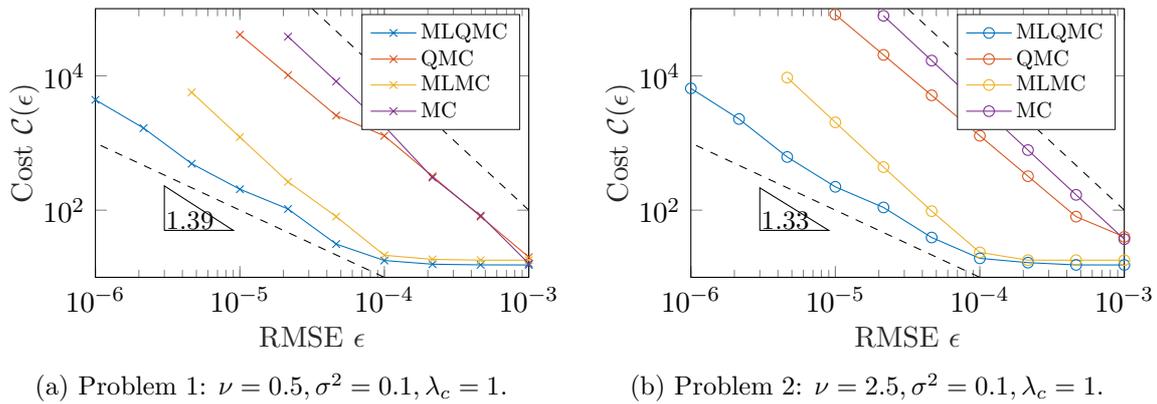
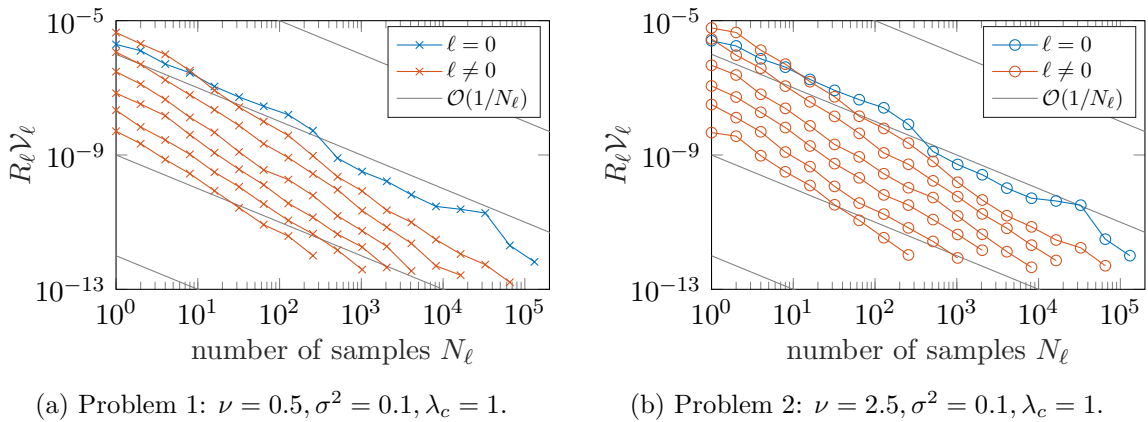
\begin{figure}
	\centering
	\begin{subfigure}[b]{0.475\textwidth}
		\centering
		\sethw{0.5}{0.8}
%
%
\definecolor{mycolor1}{rgb}{0.00000,0.44700,0.74100}%
\definecolor{mycolor2}{rgb}{0.85000,0.32500,0.09800}%
\definecolor{mycolor3}{rgb}{0.92900,0.69400,0.12500}%
\definecolor{mycolor4}{rgb}{0.49400,0.18400,0.55600}%
\definecolor{mycolor5}{rgb}{0.46600,0.67400,0.18800}%
\begin{tikzpicture}
\begin{axis}[%
width=\figurewidth,
height=\figureheight,
at={(0\figurewidth,0\figureheight)},
scale only axis,
xmode=log,
xmin=1,
xmax=2e5,
xlabel style={font=\color{white!15!black}},
xlabel={number of samples $N_\ell$},
ymode=log,
ymin=1e-13,
ymax=1e-5,
yminorticks=true,
ylabel style={font=\color{white!15!black}},
ylabel={$R_\ell\mathcal{V}_\ell$},
axis background/.style={fill=white},
legend style={nodes={scale=0.75, transform shape}, legend cell align=left, align=left, draw=white!15!black},
mark options=solid
]

\addplot [color=gray, forget plot]
table[row sep=crcr]{%
	1  1e-6 \\
	1e6  1e-12 \\
};

\addplot [color=gray, forget plot]
table[row sep=crcr]{%
	1  1e-9 \\
	1e6  1e-15 \\
};

\addplot [color=gray, forget plot]
table[row sep=crcr]{%
	1  1e-3 \\
	1e6  1e-9 \\
};

\addplot [color=mycolor1, mark=x]
table[row sep=crcr]{%
1  1.961083086935747e-6 \\
2  1.2747543008893472e-6 \\
4  5.12286080296838e-7 \\
8  2.799844698668764e-7 \\
16  1.0969181876582811e-7 \\
32  5.296357854301617e-8 \\
64  2.8320478609615622e-8 \\
128  1.5832934115512314e-8 \\
256  5.3109150382020674e-9 \\
512  8.048251533381333e-10 \\
1024  3.213231298634973e-10 \\
2048  1.6491575814062366e-10 \\
4096  6.562489161555283e-11 \\
8192  2.941434721924743e-11 \\
16384  2.419255687808611e-11 \\
32768  1.8720787088518313e-11 \\
65536  2.0286528332545875e-12 \\
131072  6.582885958673814e-13 \\
};
\addlegendentry{$\ell=0$}

\addplot [color=mycolor2, mark=x]
table[row sep=crcr]{%
1  4.428921596578092e-6 \\
2  2.0495896615880394e-6 \\
4  9.91731675896155e-7 \\
8  3.2615252801325586e-7 \\
16  8.416230773324291e-8 \\
32  2.6553526070795125e-8 \\
64  9.885721795973547e-9 \\
128  3.804319368208063e-9 \\
256  9.457660219836382e-10 \\
512  2.240564329883983e-10 \\
1024  8.470453082955697e-11 \\
2048  2.2525222738147418e-11 \\
4096  9.922000418011883e-12 \\
8192  3.0173597276438815e-12 \\
16384  1.1246921438830347e-12 \\
32768  5.461829654681451e-13 \\
65536  1.624825057974467e-13 \\
};
\addlegendentry{$\ell\neq0$}

\addplot [color=gray]
table[row sep=crcr]{%
	1  1e-12 \\
	1e6  1e-18 \\
};
\addlegendentry{$\mathcal{O}(1/N_\ell)$}

\addplot [color=mycolor2, mark=x]
table[row sep=crcr]{%
1  1.1570501681229466e-6 \\
2  4.99119878437013e-7 \\
4  1.7157800742802957e-7 \\
8  6.16468500107851e-8 \\
16  2.219108745822187e-8 \\
32  7.534475604532396e-9 \\
64  2.154238651291924e-9 \\
128  8.34196902770416e-10 \\
256  2.7380604913748465e-10 \\
512  9.242962938100328e-11 \\
1024  2.1928695354454775e-11 \\
2048  7.040872025428476e-12 \\
4096  2.3303764807223006e-12 \\
8192  4.99523962758069e-13 \\
16384  2.6271191875952015e-13 \\
};

\addplot [color=mycolor2, mark=x]
table[row sep=crcr]{%
1  3.0254778497254706e-7 \\
2  1.285933399208863e-7 \\
4  4.5108150178693615e-8 \\
8  1.3174506832082086e-8 \\
16  4.6978126278043585e-9 \\
32  1.2325128602295172e-9 \\
64  3.5919732151135376e-10 \\
128  1.851680508531709e-10 \\
256  6.234052958218463e-11 \\
512  1.5363540001564778e-11 \\
1024  5.519069728274168e-12 \\
2048  1.8946369820573987e-12 \\
4096  3.516630272379387e-13 \\
};

\addplot [color=mycolor2, mark=x]
table[row sep=crcr]{%
1  6.912841486289995e-8 \\
2  3.273298486463188e-8 \\
4  1.4759387029380773e-8 \\
8  4.019568773305711e-9 \\
16  1.1617201604730192e-9 \\
32  3.73994711348018e-10 \\
64  1.2875543945136205e-10 \\
128  3.6341528563138374e-11 \\
256  1.3911866393350324e-11 \\
512  4.383455367879108e-12 \\
1024  1.8141181816148796e-12 \\
2048  4.479880499763018e-13 \\
};

\addplot [color=mycolor2, mark=x]
table[row sep=crcr]{%
1  2.1605567154331566e-8 \\
2  6.82487204115369e-9 \\
4  2.7998381460945916e-9 \\
8  1.049153103566661e-9 \\
16  3.075725945278916e-10 \\
32  1.1314477356710746e-10 \\
64  3.4682223472592176e-11 \\
128  1.1238603888130918e-11 \\
256  4.454023999652233e-12 \\
512  1.469403080732865e-12 \\
1024  3.859713200585597e-13 \\
};

\addplot [color=mycolor2, mark=x]
table[row sep=crcr]{%
1  5.109283130516816e-9 \\
2  2.164187967754021e-9 \\
4  7.328961800503321e-10 \\
8  2.783670373584934e-10 \\
16  8.587944587679422e-11 \\
32  2.6794649593423472e-11 \\
64  8.388845765796774e-12 \\
128  3.856706010458135e-12 \\
256  1.00798389217414e-12 \\
};

\end{axis}
\end{tikzpicture}%
		\caption{Problem 1: $\nu= 0.5, \sigma^2 = 0.1, \lambda_c = 1$.}
	\end{subfigure}
	\hspace{.03\textwidth}
	\begin{subfigure}[b]{0.475\textwidth}
		\centering
		\sethw{0.5}{0.8}
%
%
\definecolor{mycolor1}{rgb}{0.00000,0.44700,0.74100}%
\definecolor{mycolor2}{rgb}{0.85000,0.32500,0.09800}%
\definecolor{mycolor3}{rgb}{0.92900,0.69400,0.12500}%
\definecolor{mycolor4}{rgb}{0.49400,0.18400,0.55600}%
\definecolor{mycolor5}{rgb}{0.46600,0.67400,0.18800}%
\begin{tikzpicture}
\begin{axis}[%
width=\figurewidth,
height=\figureheight,
at={(0\figurewidth,0\figureheight)},
scale only axis,
xmode=log,
xmin=1,
xmax=2e5,
xlabel style={font=\color{white!15!black}},
xlabel={number of samples $N_\ell$},
ymode=log,
ymin=1e-13,
ymax=1e-5,
yminorticks=true,
ylabel style={font=\color{white!15!black}},
ylabel={$R_\ell\mathcal{V}_\ell$},
axis background/.style={fill=white},
legend style={nodes={scale=0.75, transform shape}, legend cell align=left, align=left, draw=white!15!black},
mark options=solid
]

\addplot [color=gray, forget plot]
table[row sep=crcr]{%
	1  1e-6 \\
	1e6  1e-12 \\
};

\addplot [color=gray, forget plot]
table[row sep=crcr]{%
	1  1e-9 \\
	1e6  1e-15 \\
};

\addplot [color=gray, forget plot]
table[row sep=crcr]{%
	1  1e-3 \\
	1e6  1e-9 \\
};

\addplot [color=mycolor1, mark=o]
table[row sep=crcr]{%
1  2.5255379293112715e-6 \\
2  1.7687727300133097e-6 \\
4  7.313407014629295e-7 \\
8  4.133621269173037e-7 \\
16  1.7753106908664466e-7 \\
32  8.42723951415309e-8 \\
64  4.445195909945198e-8 \\
128  2.5425539451143946e-8 \\
256  8.167878261821535e-9 \\
512  1.267835832419195e-9 \\
1024  5.21476656766123e-10 \\
2048  2.5802605143002004e-10 \\
4096  1.0300255251608185e-10 \\
8192  5.167351915483606e-11 \\
16384  4.270087870672703e-11 \\
32768  3.2047371595751526e-11 \\
65536  3.1157666863249482e-12 \\
131072  1.0054898080001414e-12 \\
};
\addlegendentry{$\ell=0$}

\addplot [color=mycolor2, mark=o]
table[row sep=crcr]{%
1  6.074017829896498e-6 \\
2  4.45123565781046e-6 \\
4  1.318534563476667e-6 \\
8  5.070120803913345e-7 \\
16  1.5229829573479845e-7 \\
32  5.42022530025661e-8 \\
64  1.4270823550884527e-8 \\
128  6.198682218616024e-9 \\
256  2.2561247525058507e-9 \\
512  6.43766861594888e-10 \\
1024  1.5441622452866717e-10 \\
2048  4.5683913805083854e-11 \\
4096  1.5579751250513472e-11 \\
8192  7.24712494719899e-12 \\
16384  2.9456336633684774e-12 \\
32768  1.724762601127098e-12 \\
65536  5.046188735271882e-13 \\
};
\addlegendentry{$\ell\neq0$}

\addplot [color=gray]
table[row sep=crcr]{%
	1  1e-12 \\
	1e6  1e-18 \\
};
\addlegendentry{$\mathcal{O}(1/N_\ell)$}

\addplot [color=mycolor2, mark=o]
table[row sep=crcr]{%
1  2.858285166223043e-6 \\
2  9.37354928722406e-7 \\
4  3.850608291126396e-7 \\
8  1.1242401187913433e-7 \\
16  3.822416753905074e-8 \\
32  1.409439587847442e-8 \\
64  4.4111060960024384e-9 \\
128  1.1539716014294676e-9 \\
256  5.282654301526512e-10 \\
512  1.702066371698187e-10 \\
1024  5.779112359023803e-11 \\
2048  1.7600463652275794e-11 \\
4096  6.5714466099249414e-12 \\
8192  2.0654281810928175e-12 \\
16384  7.222339022157215e-13 \\
};

\addplot [color=mycolor2, mark=o]
table[row sep=crcr]{%
1  4.6902296506422496e-7 \\
2  2.410572976697135e-7 \\
4  6.423634188596237e-8 \\
8  2.4447607170052504e-8 \\
16  7.382227032334066e-9 \\
32  2.060652952650791e-9 \\
64  8.80597329633427e-10 \\
128  2.7628986958078034e-10 \\
256  1.0590436300247832e-10 \\
512  5.421155891953784e-11 \\
1024  1.5333700217905995e-11 \\
2048  4.353982848811738e-12 \\
4096  1.3198433287888164e-12 \\
8192  4.5332889054891096e-13 \\
};

\addplot [color=mycolor2, mark=o]
table[row sep=crcr]{%
1  1.1371842208084408e-7 \\
2  5.312127394999738e-8 \\
4  1.873717179129406e-8 \\
8  6.571979824903253e-9 \\
16  1.8569927201008097e-9 \\
32  5.526890370532024e-10 \\
64  2.0894217937227336e-10 \\
128  6.486055815540593e-11 \\
256  3.222166356718808e-11 \\
512  1.1919189234380503e-11 \\
1024  4.301172100209851e-12 \\
2048  1.461193586072791e-12 \\
};

\addplot [color=mycolor2, mark=o]
table[row sep=crcr]{%
1  3.1811675410598413e-8 \\
2  1.295838114921773e-8 \\
4  4.916186587736092e-9 \\
8  1.2217687371894998e-9 \\
16  4.4999570071884484e-10 \\
32  1.1603996109697574e-10 \\
64  3.696957130770174e-11 \\
128  1.7364641191912037e-11 \\
256  6.770584378696325e-12 \\
512  2.6645560578829424e-12 \\
1024  8.5966317098194116e-13 \\
};

\addplot [color=mycolor2, mark=o]
table[row sep=crcr]{%
1  4.648831180681515e-9 \\
2  3.6330729773103745e-9 \\
4  9.530397208558607e-10 \\
8  3.16368518084301e-10 \\
16  1.2383076082702143e-10 \\
32  3.2972638432456804e-11 \\
64  1.130695495948865e-11 \\
128  3.4341131745807717e-12 \\
256  1.0571614447414563e-12 \\
};

\end{axis}
\end{tikzpicture}%
		\caption{Problem 2: $\nu= 2.5, \sigma^2 = 0.1, \lambda_c = 1$.}
	\end{subfigure}
	\caption{MSE contribution $\mathcal{V}_\ell$ as a function of the number of QMC samples $N_\ell$ used for each of the $R_\ell=R=10$ shifts. Shown is $R_\ell\mathcal{V}_\ell$, since this quantity does not depend on $R_\ell$. Lower lines correspond to finer levels, except in the case $\ell=0$ for low $N_0$.
	}
	\label{fig:MSE_levels}
\end{figure}

\section{Convergence analysis} \label{sec:analysis}
This section provides a theoretical justification for Assumption \ref{assump:Var}. The novelties in the regularity analysis are the following. Firstly, we analyze the adjoint equation, which has a right-hand side that depends on the uncertain variables through the solution of the state equation. Moreover, our integration error is stated in terms of $L^2$ errors over the spatial domain $D$, we do not apply a bounded linear functional to the PDE solution. Both aspects occur in \cite{guth2021quasi}, where the regularity analysis for the solution of the adjoint equation is provided with a complete error analysis for the single level method with uniformly distributed parameters. In this manuscript we study lognormally distributed parameters using a multilevel estimator. While multilevel methods are well studied for problems with deterministic right-hand sides, the regularity anaylsis for a multilevel method has not been studied for the problem class considered in this manuscript. Secondly, we sample the random field using the circulant embedding method instead of a series expansion. We therefore first show that the linearly interpolated random field inherits important properties from the true random field.

\subsection{Properties of the random field} \label{sec:a_properties}
For $\beta \in (0,1]$, we denote by $C^\beta(\overline{D})$ the space of H\"older continuous functions on $\overline{D}$ with exponent $\beta$ and norm $\|v\|_{C^\beta(\overline{D})} := \sup_{x\in \overline{D}} |v(x)| + |v|_{C^\beta(\overline{D})}$ with seminorm $|v|_{C^\beta(\overline{D})} := \sup_{x_1,x_2\in \overline{D}, x_1\neq x_2} |v(x_1)-v(x_2)| / \|x_1-x_2\|^\beta <\infty$. The space $L^p(\Omega,X)$ denotes the Bochner space of all random fields in a separable Banach space $X$ with bounded $p$-th moments over $\Omega$, i.e., $L^p(\Omega,X)$ contains strongly measurable functions that have finite norm given by
\begin{align*}
\|v\|_{L^p(\Omega,X)} := \begin{cases} \big(\int_\Omega \|v\|_X^p \mathrm d\mathbb{P}\big)^{1/p} ,& \text{for }  p < \infty\,,\\ \text{ess\,sup}_{\omega \in \Omega} \|v\|_X ,&\text{for } p = \infty\,.\end{cases}
\end{align*}
The variational form \eqref{eq:PDE} is based on the Sobolev space $H_0^1(D)$ with norm 
\begin{align*}
\|v\|_{H_0^1(D)} := \|\nabla v\|_{L^2(D)}
\end{align*} 
and dual space $H^{-1}(D)$. Later we will use the embeddings
\begin{align}
\|v\|_{H^{-1}(D)} &\leq  c_1 \|v\|_{L^2(D)}\,, \label{eq:embconst1} \\
\|v\|_{L^2(D)} &\leq c_2 \|v\|_{H_0^1(D)}\,, \label{eq:embconst2}
\end{align}
with embedding constants $c_1,c_2>0$. Moreover, by $|\cdot|$ we denote the Euclidean norm in $\mathbb{R}^n$.

\begin{assumption}\label{assump:diffcoeff}
	We assume that $Z(\cdot,\omega) \in C^\beta(\overline{D})$, for some $\beta \in (0,1]$ $\mathbb{P}$-a.s.
\end{assumption}
Under this assumption, using Fernique's Theorem, one can show (see \cite{charrier2013}), 
that $a \in L^p(\Omega,C^\beta(\overline{D}))$ for all $p \in [1,\infty)$ and furthermore that 
	\begin{align*}
	a_{\max}(\omega) := \max_{x\in \overline{D}} a(x,\omega) \in L^p(\Omega) \qquad \text{and} \qquad \frac{1}{a_{\min}(\omega)} := \frac{1}{\min_{x \in \overline{D}}a(x,\omega)} \in L^p(\Omega)\,,
	\end{align*} 
	for all $p \in [1,\infty)$, i.e., $0 < a_{\min}(\omega) \leq a_{\max}(\omega) <\infty$ $\mathbb{P}$-a.s.
Clearly, for $x$ in any set of points $\{x_i\}_{i=1}^m \subset D$, we have
\begin{align*}
0 < a_{\min}(\omega) \leq \min_{x \in \{x_i\}_{i=1}^m} a(x,\omega) \leq \max_{x \in \{x_i\}_{i=1}^m} a(x,\omega) \leq a_{\max}(\omega) < \infty \quad \mathbb{P}\text{-}a.s.
\end{align*}
Hence for any realization of the linearly interpolated field $a_s^\bsy(x)$ (see \eqref{eq:linint}), which is exact on $\{x_i\}_{i=1}^m$, the bounds can only be tighter
\begin{align*}
0 < a_{\min}^\bsy \leq \min_{x \in \{x_i\}_{i=1}^m} a_s^\bsy(x) \leq \min_{x \in \overline{D}} a_s^\bsy(x) \leq \max_{x \in \overline{D}} a_s^\bsy(x) \leq \max_{x \in \{x_i\}_{i=1}^m} a_s^\bsy(x) \leq a_{\max}^\bsy < \infty\,,
\end{align*}
where we use the convention $a_{\min}^\bsy := a_{\min}(\omega)$ and $a_{\max}^\bsy := a_{\max}(\omega)$.

The piecewise linear interpolant $a_s^\bsy(x)$ is clearly Lipschitz, i.e., $a_s^\bsy(x) \in C^\beta(\overline{D})$ for $\beta = 1$ (and thus also for all $\beta < 1$). 
We conclude that
\begin{align} \label{eq:grad_a_bound}
\sup_{x\in D} |\nabla a_s^\bsy(x)| = |a_s^\bsy|_{C^1(\overline{D})}\,.
\end{align}
In fact, since $w_{k,x}$ in \eqref{eq:linint} are first-order polynomials in $x$,
\begin{align}\label{eq:grad_a_bound2}
|a_s^\bsy|_{C^1(\overline{D})} = \sup_{x \in D} |\nabla a_s^\bsy(x)| &= \sup_{x\in D} |\sum_{k=1}^{2^d} \nabla w_{k,x} a_s^\bsy(x_{k,x})|\nonumber\\ 
&\leq \sum_{k=1}^{2^d} \sup_{x\in D}|\nabla w_{k,x}| a_{\max}^\bsy =: C_d\, a_{\max}^\bsy\,.
\end{align}
The constants $C_d < \infty$ are then finite if the interpolation points $x_{k,x}$ have a nonzero distance. In this case we thus have $|a_s^\bsy|_{C^1(\overline{D})} \in L^p(\Omega)$.
Note that we silently ignored the issue that the gradients $\nabla a_s^\bsy(x_i)$ and $\nabla w_{k,x}$ are not well defined in the interpolation points. One could overcome this issue by either considering the gradients in the interpolation points to be either zero or set-valued, where the set contains all gradients around the interpolation point. It can easily be checked that \eqref{eq:grad_a_bound} and \eqref{eq:grad_a_bound2} can then remain as stated above.

In order to analyze the regularity w.r.t.~the uncertain variables, we will use the following notation.
Let $\bsnu \in \mathbb{N}^s_0$ be a multi-index. 
Let $\partial^\bsnu$ denote the $\bsnu$-th derivative w.r.t.~$\bsy$. The cardinality of a multi-index $\bsnu \in \mathbb{N}^s_0$ is denoted by $|\bsnu| := \sum_{j=1}^s \nu_j$. For a vector $\bsb = (b_1,\ldots,b_s) \in \mathbb{R}^s$ we define $\bsb^\bsnu := \prod_{j=1}^s b_j^{\nu_j}$.
For the remainder of this text, the vector $\bsb$ is specified as
\begin{align} \label{eq:b_j_def}
\bsb := (b_1,\ldots,b_s) \text{ with } b_j := \|B_{\cdot,j}\|_{\max},
\end{align}
i.e, the maximum of the $j$-th column of the matrix $B$ in \eqref{eq:randfield}.

Since $a_s^\bsy(x_i) = \exp\big(\sum_{j=1}^s B_{i,j} y_j + \overline{Z}_i\big) \geq 0$ for any of the uniform CE grid points $x_i \in \{x_i\}_{i=1}^m$, see \eqref{eq:a_CE}, the chain rule results in 
	$|\partial^\bsnu a_s^\bsy(x_i)| =  a_s^\bsy(x_{i}) \prod_{j=1}^s |B_{i,j}^{\nu_j}| \leq a_s^\bsy(x_i)\, \bsb^\bsnu\,.$
With the intermediate points included, the random field is specified by the interpolation \eqref{eq:linint}.
Since $w_{k,x}\geq 0$ for all $k= 1,\ldots 2^d$ and $x \in D$, this result generalizes to all $x \in D$:
\begin{equation} \label{eq:bound_partial_a}
|\partial^\bsnu a_s^\bsy(x)| = \sum_{k=1}^{2^d}w_{k,x}|\partial^\bsnu a_s^\bsy(x_{k,x})| \leq \sum_{k=1}^{2^d}w_{k,x} a_s^\bsy(x_{k,x})  \bsb^\bsnu = a_s^\bsy(x) \bsb^\bsnu. 
\end{equation}
It then follows immediately that 
\begin{align}\label{eq:reg_a}
\max_{x \in D}\left|\frac{\partial^\bsnu a_s^\bsy(x)}{a_s^\bsy(x)}\right| \leq  \bsb^\bsnu\,.
\end{align}
Furthermore,
\begin{align}
\max_{x\in D} \left|\nabla \bigg( \frac{\partial^{\bsnu}\asy(x)}{\asy(x)}  \bigg)\right|  &= \max_{x\in D} \bigg| \frac{\asy(x) \nabla(\partial^{\bsnu}\asy(x)) - \nabla \asy(x) \partial^{\bsnu}\asy(x)}{(\asy(x))^2}\bigg|\nonumber\\
&\leq\max_{x\in D} \bigg(\bigg|\frac{\asy(x) \nabla\big(\asy(x) \bsb^{\bsnu}\big)}{(\asy(x))^2}\bigg| + \bigg|\frac{\nabla \asy(x) \big(\asy(x) \bsb^{\bsnu}\big)}{(\asy(x))^2}\bigg|\bigg)\nonumber\\
&=\max_{x\in D} \bigg(\bigg|\frac{\nabla\big(\asy(x) \bsb^{\bsnu}\big)}{\asy(x)}\bigg| + \bigg|\frac{\nabla \asy(x) \bsb^{\bsnu}}{\asy(x)}\bigg|\bigg)\nonumber\\
&=2\,\max_{x\in D} \bigg|\frac{\nabla \asy(x) \bsb^{\bsnu}}{\asy(x)}\bigg|
=2\,\max_{x\in D} \bigg|\frac{\nabla \asy(x)}{\asy(x)}\bigg| \bsb^{\bsnu}
\leq 2 \bsb^{\bsnu}\frac{|\asy|_{C^1(\overline{D})}}{a_{\min}^\bsy} \,,
\label{eq:grad_reg_a}
\end{align}
where the last inequality follows from (\ref{eq:grad_a_bound}).

The following lemma is based on \cite[Lemma 1]{graham2018circulant} and bounds the interpolation error for functions in $C^\beta(\overline{D})$ for some $\beta \in (0,1]$.
\begin{lemma} \label{lem:interpolation_error}
	Let $a \in C^\beta(\overline{D})$ for some $\beta \in (0,1]$. Let $b$ be the linear interpolant of $a$ in interpolation points $\{x_i\}_{i=1}^{m}$ forming some uniform mesh with mesh width $\hat{h}$, i.e.,
	$b(x) = \calI(a;\{x_i\}_{i=1}^{m})(x)$. Then we have for any $x \in D$ that
	\begin{align*}
		|a(x) - b(x)| \leq (\sqrt{d}\hat{h})^\beta |a|_{C^\beta(\overline{D})}.
	\end{align*}
\end{lemma}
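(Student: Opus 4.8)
The plan is to exploit the fact that the multilinear interpolant
$b(x) = \calI(a;\{x_i\}_{i=1}^{m})(x) = \sum_{k=1}^{2^d} w_{k,x}\, a(x_{k,x})$
is a \emph{convex combination} of the $2^d$ vertex values surrounding $x$, whose weights satisfy $\sum_{k=1}^{2^d} w_{k,x} = 1$ and $0\le w_{k,x}\le 1$ (cf.\ \eqref{eq:linint}). First I would use this partition-of-unity property to write $a(x) = \sum_{k=1}^{2^d} w_{k,x}\, a(x)$, so that the error becomes
\begin{equation*}
a(x) - b(x) = \sum_{k=1}^{2^d} w_{k,x}\big(a(x) - a(x_{k,x})\big).
\end{equation*}
The whole point of the proof is that this turns the interpolation error into a weighted average of \emph{pointwise} differences of $a$, to which the H\"older seminorm can be applied directly.

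Next, by the triangle inequality and the non-negativity of the weights, $|a(x) - b(x)| \leq \sum_{k=1}^{2^d} w_{k,x}\, |a(x) - a(x_{k,x})|$. Invoking the H\"older estimate $|a(x) - a(x_{k,x})| \leq |a|_{C^\beta(\overline{D})}\, \|x - x_{k,x}\|^\beta$ reduces the task to bounding the geometric distances. Since $x$ lies inside the cell whose corners are the $\{x_{k,x}\}_{k=1}^{2^d}$, namely a $d$-dimensional hypercube of side $\hat{h}$, the distance from $x$ to any of its vertices is at most the cell diagonal $\sqrt{d}\,\hat{h}$; hence $\|x - x_{k,x}\| \le \sqrt{d}\,\hat{h}$ uniformly in $k$, giving $|a(x) - a(x_{k,x})| \le (\sqrt{d}\,\hat{h})^\beta |a|_{C^\beta(\overline{D})}$. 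Substituting this $k$-independent bound and using $\sum_{k=1}^{2^d} w_{k,x} = 1$ yields $|a(x) - b(x)| \le (\sqrt{d}\,\hat{h})^\beta |a|_{C^\beta(\overline{D})}$, as claimed.

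There is no deep analytic obstacle here; the argument is essentially a convexity estimate combined with H\"older continuity. The only steps requiring a little care are the geometric bound $\|x - x_{k,x}\| \le \sqrt{d}\,\hat{h}$, which relies on Assumption \ref{as:rrgrid} that the grid is uniform rectangular so that each cell is a cube of side $\hat{h}$ with diameter exactly $\sqrt{d}\,\hat{h}$, and the verification that the interpolation weights are genuinely a non-negative partition of unity (including for $x$ lying on a cell boundary, where several cells meet but the vertex values agree). Once these are in place, the estimate is uniform in $x \in D$ and therefore holds for every point, matching the statement.
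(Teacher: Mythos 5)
Your argument is correct and is essentially identical to the paper's proof: both use the partition-of-unity property of the weights to rewrite the error as a convex combination of pointwise differences, apply the H\"older seminorm, and bound each distance by the cell diagonal $\sqrt{d}\,\hat{h}$. No gaps; the extra care you flag about the weights and the cell geometry is exactly what justifies the paper's one-line chain of inequalities.
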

\begin{proof}
	The statement follows from
	\begin{align*}
	|a(x) - b(x)| &= |a(x) - \sum_{k=1}^{2^d} w_{k,x} a(x_{k,x})| = |\sum_{k=1}^{2^d} w_{k,x} (a(x) - a(x_{k,x}))|\\
	&\leq \sum_{k=1}^{2^d} w_{k,x} |a(x) - a(x_{k,x})| \leq \sum_{k=1}^{2^d} w_{k,x} |a|_{C^\beta(\overline{D})} |x-x_{k,x}|^\beta\\
	&\leq \sum_{k=1}^{2^d} w_{k,x} |a|_{C^\beta(\overline{D})} (\sqrt{d}\hat{h})^\beta\,,
	\end{align*}
	since $\sum_{k=1}^{2^d} w_{k,x} = 1$. 
\end{proof}
	\noindent The above lemma can be applied to the diffusion coefficient and its interpolation. Taking $a$ above to be the exact diffusion coefficient $a(\und,\omega)$ for some $\omega$ and $b$ its interpolation $a_s^\bsy$, as defined in \eqref{eq:linint}, we find
	\begin{align*}
		|a(x,\omega) - a_s^\bsy(x)| \leq (\sqrt{d}\hat{h})^\beta |a(\cdot,\omega)|_{C^\beta(\overline{D})}.
	\end{align*}
	The quantity $\hat{h}$ is then the mesh width of the uniform CE mesh on which the diffusion coefficient is sampled exactly. Furthermore, since we use nested but not necessarily equal CE grids, the mesh width depends on $\ell$. Denoting the CE mesh width at level $\ell$ by $\hat{h}_\ell$, we have due to Assumption \ref{as:Mhs} that $\hat{h}_\ell \simeq h_\ell$, where $h_\ell$ is the FE mesh width defined in previous sections. The above Lemma then implies
	\begin{equation} \label{eq:interpolation_error}
		|a(x,\omega) - a_{s_\ell}^\bsy(x)| \lesssim (\sqrt{d}h_\ell)^\beta |a(\cdot,\omega)|_{C^\beta(\overline{D})}. 
	\end{equation}
\newcommand{\uysl}{{u_{s_\ell}^\bsy}}
\newcommand{\uysll}{{u_{s_{\ell-1}}^\bsy}}
\newcommand{\aysl}{{a_{s_\ell}^\bsy}}
\newcommand{\aysll}{{a_{s_{\ell-1}}^\bsy}}

\begin{assumption} \label{as:b}
	For adjacent CE grid points $x_i$ and $x_j$, i.e., for $|x_i-x_j| \leq \hat{h}\sqrt{d}$, we have
	$|\bsb_i^\bsnu - \bsb_j^\bsnu| \leq C_b \hat{h} \bs{b}^\bsnu$, with $C_b$ some constant, $\bsb_i = (B_{i,k})_{k=1}^s$ the $i$-th row of $B$, and $\bsb_j$ the $j$-th row of $B$.
\end{assumption}
\begin{lemma} \label{lem:adjacentCEpoints}
	Under Assumption \ref{as:b}, we have
	\begin{align*}
		|\partial^{\bsnu}(\asy(x_i)-\asy(x_j))| \leq \hat{h} (\sqrt{d} C_d+C_b) a^\bsy_{\max} \bsb^{\bsnu}.
	\end{align*}
\end{lemma}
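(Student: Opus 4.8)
The plan is to reduce everything to the exponential derivative formula $\partial^{\bsnu}\asy(x_i) = \asy(x_i)\,\bsb_i^{\bsnu}$ obtained from the chain rule just before \eqref{eq:bound_partial_a} (where $\bsb_i = (B_{i,k})_{k=1}^s$ is the $i$-th row of $B$), and then to split the difference into one part governed by Assumption \ref{as:b} and one part governed by the Lipschitz bound \eqref{eq:grad_a_bound2}. First I would write, using that formula at both points,
\[
\partial^{\bsnu}(\asy(x_i)-\asy(x_j)) = \asy(x_i)\,\bsb_i^{\bsnu} - \asy(x_j)\,\bsb_j^{\bsnu},
\]
and insert the cross term $\asy(x_i)\,\bsb_j^{\bsnu}$ to obtain the decomposition
\[
\partial^{\bsnu}(\asy(x_i)-\asy(x_j)) = \asy(x_i)\,(\bsb_i^{\bsnu}-\bsb_j^{\bsnu}) + (\asy(x_i)-\asy(x_j))\,\bsb_j^{\bsnu}.
\]
Crucially, the split is arranged so that the factor multiplying the power difference $\bsb_i^{\bsnu}-\bsb_j^{\bsnu}$ is $\asy$ evaluated at a single point, while the factor multiplying the spatial increment $\asy(x_i)-\asy(x_j)$ is the row-vector power $\bsb_j^{\bsnu}$.

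Applying the triangle inequality then treats the two terms independently. For the first, I would bound $\asy(x_i)\le a^\bsy_{\max}$ and invoke Assumption \ref{as:b}, which for adjacent CE points gives $|\bsb_i^{\bsnu}-\bsb_j^{\bsnu}| \le C_b\hat{h}\,\bsb^{\bsnu}$, producing $C_b\hat{h}\,a^\bsy_{\max}\bsb^{\bsnu}$. For the second, I would use that the interpolant $\asy$ is Lipschitz with constant equal to its $C^1$ seminorm, which by \eqref{eq:grad_a_bound2} satisfies $|\asy|_{C^1(\overline{D})}\le C_d\,a^\bsy_{\max}$; since adjacent grid points satisfy $|x_i-x_j|\le \sqrt{d}\,\hat{h}$ (the diagonal of a grid cell), this yields $|\asy(x_i)-\asy(x_j)|\le \sqrt{d}\,\hat{h}\,C_d\,a^\bsy_{\max}$. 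Combined with $|\bsb_j^{\bsnu}|\le \bsb^{\bsnu}$, which holds because each entry satisfies $|B_{j,k}|\le b_k = \|B_{\cdot,k}\|_{\max}$ by the definition \eqref{eq:b_j_def}, the second term is at most $\sqrt{d}\,C_d\,\hat{h}\,a^\bsy_{\max}\bsb^{\bsnu}$. Summing the two contributions gives exactly $\hat{h}(\sqrt{d}\,C_d + C_b)\,a^\bsy_{\max}\,\bsb^{\bsnu}$.

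There is no deep obstacle here: the argument is a single triangle-inequality split assembled from three ingredients already established (the exponential derivative identity, Assumption \ref{as:b}, and the $C^1$ bound \eqref{eq:grad_a_bound2}). The only points requiring mild care are the bookkeeping of the decomposition — choosing the cross term so that each factor is controlled by a bound available to it — and the identification that the relevant Lipschitz constant for $\asy$ is its $C^1$ seminorm, together with the geometric fact that adjacent CE grid points lie at distance at most $\sqrt{d}\,\hat{h}$.
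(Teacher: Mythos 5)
Your proof is correct and follows essentially the same argument as the paper: both start from $\partial^{\bsnu}\asy(x_\cdot)=\asy(x_\cdot)\bsb_\cdot^{\bsnu}$, insert a cross term to split the difference into a $(\bsb_i^{\bsnu}-\bsb_j^{\bsnu})$ part controlled by Assumption \ref{as:b} and a $(\asy(x_i)-\asy(x_j))$ part controlled by the Lipschitz bound \eqref{eq:grad_a_bound2} together with $|x_i-x_j|\le\sqrt{d}\hat{h}$. The only (immaterial) difference is which cross term you insert — the paper pairs $\bsb_i^{\bsnu}$ with the field increment and $\asy(x_j)$ with the power difference, while you do the mirror-image pairing — and both yield the identical final bound.
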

\begin{proof}
	Assumption \ref{as:b} and Lemma \ref{lem:interpolation_error} lead to 
	\begin{align*}
		|\partial^{\bsnu}(\asy(x_i)-\asy(x_j))| &= |\asy(x_i)\bsb_i^\bsnu - \asy(x_j)\bsb_j^\bsnu| \\
		&= |(\asy(x_i) - \asy(x_j))\bsb_i^\bsnu + \asy(x_j)(\bsb_i^\bsnu - \bsb_j^\bsnu)| \\
		& \leq |(\asy(x_i) - \asy(x_j))\bsb_i^\bsnu| + |\asy(x_j)(\bsb_i^\bsnu - \bsb_j^\bsnu)| \\
		& \leq \sqrt{d} \hat{h} |\asy|_{C^1(\overline{D})} \bsb^\bsnu + C_b \hat{h} \bsb^\bsnu \asy(x_j) \\
		& \leq \sqrt{d} \hat{h} C_d a^\bsy_{\max} \bsb^\bsnu + C_b \hat{h} \bsb^\bsnu a^\bsy_{\max} \\
		& = \hat{h}(\sqrt{d}C_d + C_b) a^\bsy_{\max}\bsb^\bsnu. \qedhere
	\end{align*}
\end{proof}

\begin{lemma} \label{lem:deriv_interpolation_error}
	Let $\aysl$ be generated with the CE method from $\bsy$ 
	and let $\aysll$ be its interpolation in the points $\{x_i\}_{i=1}^{m_{\ell-1}}$ forming some uniform mesh with mesh width $\hat{h}_{\ell-1}$, i.e.,
	$\aysll(x) = \calI(\aysl;\{x_i\}_{i=1}^{m_{\ell-1}})(x)$. 
	Then we have for any $x \in D$ that
	\begin{align*}
		|\partial^{\bsnu}(\aysl(x)-\aysll(x))| \leq (\hat{h}_\ell + \hat{h}_{\ell-1})(\sqrt{d}C_d + C_b) a^\bsy_{\max}\bsb^\bsnu.
	\end{align*}
\end{lemma}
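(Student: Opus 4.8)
The plan is to reduce everything to the pointwise estimate of Lemma \ref{lem:adjacentCEpoints}, the only extra ingredient being careful bookkeeping of the distances between the fine and coarse grid points surrounding a given $x$. First, I would exploit that the multilinear interpolation weights $w_{k,x}$ in \eqref{eq:linint}--\eqref{eq:linint2} do not depend on $\bsy$ and sum to one, so that $\partial^\bsnu$ commutes with $\calI$. Writing $\{x_{k,x}^{\ell-1}\}_{k=1}^{2^d}$ for the vertices of the coarse cell containing $x$ and $\{x_{j,x}^{\ell}\}_{j=1}^{2^d}$ for those of the fine cell, and using $\sum_k w_{k,x} = 1 = \sum_j w_{j,x}$, I would write
\begin{align*}
\partial^\bsnu(\aysl(x) - \aysll(x))
&= \partial^\bsnu\aysl(x) - \sum_{k=1}^{2^d} w_{k,x}\,\partial^\bsnu\aysl(x_{k,x}^{\ell-1}) \\
&= \sum_{k=1}^{2^d}\sum_{j=1}^{2^d} w_{k,x} w_{j,x}\big(\partial^\bsnu\aysl(x_{j,x}^{\ell}) - \partial^\bsnu\aysl(x_{k,x}^{\ell-1})\big).
\end{align*}
Because the CE grids are nested, every coarse vertex $x_{k,x}^{\ell-1}$ is also a fine grid point, so both arguments are genuine CE grid points at which $\partial^\bsnu\aysl = \aysl(\cdot)\,\bsb_{(\cdot)}^\bsnu$ is exact. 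As the weights are nonnegative and sum to one, it then suffices to bound a single difference $|\partial^\bsnu\aysl(p) - \partial^\bsnu\aysl(q)|$ with $p := x_{j,x}^{\ell}$ and $q := x_{k,x}^{\ell-1}$.

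Next I would reproduce the algebra in the proof of Lemma \ref{lem:adjacentCEpoints}: using $\partial^\bsnu\aysl(p) = \aysl(p)\,\bsb_p^\bsnu$,
\begin{align*}
|\partial^\bsnu\aysl(p) - \partial^\bsnu\aysl(q)|
&= |\aysl(p)\bsb_p^\bsnu - \aysl(q)\bsb_q^\bsnu| \\
&\leq |\aysl(p) - \aysl(q)|\,|\bsb_p^\bsnu| + \aysl(q)\,|\bsb_p^\bsnu - \bsb_q^\bsnu|.
\end{align*}
The first term I would control with the Lipschitz bound $|\aysl(p) - \aysl(q)| \leq |\aysl|_{C^1(\overline{D})}\,|p - q|$ together with \eqref{eq:grad_a_bound2} and $|\bsb_p^\bsnu| \leq \bsb^\bsnu$; the second term with $\aysl(q) \leq a^\bsy_{\max}$. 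The geometric input is that $p$ and $q$ both lie in the coarse cell containing $x$ (with $p$ even in a fine subcell of it), whence $|p - q| \leq |p - x| + |x - q| \leq \sqrt{d}\,\hat{h}_\ell + \sqrt{d}\,\hat{h}_{\ell-1} = \sqrt{d}(\hat{h}_\ell + \hat{h}_{\ell-1})$.

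The one genuinely new point — and the step I expect to need the most care — is the bound on $|\bsb_p^\bsnu - \bsb_q^\bsnu|$. Assumption \ref{as:b} is phrased for grid points that are \emph{adjacent} on a single mesh of width $\hat{h}$, whereas here $p$ and $q$ sit on two different (nested) grids and are in general not adjacent. I would apply Assumption \ref{as:b} with the effective mesh width $\hat{h} := \hat{h}_\ell + \hat{h}_{\ell-1}$, which is legitimate precisely because $|p - q| \leq \sqrt{d}(\hat{h}_\ell + \hat{h}_{\ell-1}) = \sqrt{d}\,\hat{h}$; this yields $|\bsb_p^\bsnu - \bsb_q^\bsnu| \leq C_b(\hat{h}_\ell + \hat{h}_{\ell-1})\bsb^\bsnu$, reading Assumption \ref{as:b} as a Lipschitz-type regularity bound on $x \mapsto \bsb_x^\bsnu$ rather than a statement restricted to grid adjacency. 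Combining the two terms gives
\begin{align*}
|\partial^\bsnu\aysl(p) - \partial^\bsnu\aysl(q)| \leq (\hat{h}_\ell + \hat{h}_{\ell-1})(\sqrt{d}\,C_d + C_b)\,a^\bsy_{\max}\,\bsb^\bsnu,
\end{align*}
and feeding this back into the convex combination of the first display — using once more that the weights are nonnegative and sum to one — reproduces exactly the claimed estimate, uniformly in $x \in D$.
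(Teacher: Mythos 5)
Your decomposition is genuinely different from the paper's. The paper picks a single grid point $x_0$ that is simultaneously a vertex of the fine cell and of the coarse cell containing $x$ (this exists because the grids are nested), writes $\partial^{\bsnu}(\aysl(x)-\aysll(x)) = \partial^{\bsnu}(\aysl(x)-\aysl(x_0)) - \partial^{\bsnu}(\aysll(x)-\aysll(x_0))$, and then observes that each of the two resulting differences is a convex combination of differences between $x_0$ and grid points \emph{adjacent to it on the respective grid}, so Lemma \ref{lem:adjacentCEpoints} applies verbatim once with $\hat{h}_\ell$ and once with $\hat{h}_{\ell-1}$. You instead expand both interpolants and compare every fine vertex $p$ directly against every coarse vertex $q$. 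The algebra of your double convex combination is correct, as is the geometric bound $|p-q|\leq \sqrt{d}(\hat{h}_\ell+\hat{h}_{\ell-1})$ and the treatment of the term $|\aysl(p)-\aysl(q)|\,|\bsb_p^{\bsnu}|$, since the Lipschitz bound \eqref{eq:grad_a_bound2} is global.

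The step you flagged yourself is, however, a real gap as written: Assumption \ref{as:b} is a statement about pairs of grid points that are adjacent on a given CE grid of mesh width $\hat{h}$, and $\hat{h}$ there is the mesh width, not a free parameter. Your pairs $(p,q)$ are in general adjacent on neither grid ($q$ need not be a fine neighbour of $p$, and $p$ need not be a coarse grid point at all), so invoking the assumption with the ``effective mesh width'' $\hat{h}_\ell+\hat{h}_{\ell-1}$ amounts to silently strengthening it to a Lipschitz bound on $i\mapsto\bsb_i^{\bsnu}$ over the whole grid. The paper's detour through $x_0$ exists precisely to avoid this. Your argument can be repaired without changing the constant: since the grids are nested, $q$ is a fine grid point, and one can connect $q$ to $p$ by a lattice path of fine grid points in which consecutive points satisfy $|x_i-x_j|\leq\sqrt{d}\,\hat{h}_\ell$; telescoping Assumption \ref{as:b} along this path (the number of steps needed is at most $(\hat{h}_\ell+\hat{h}_{\ell-1})/\hat{h}_\ell$ for the standard integer refinement ratios) recovers $|\bsb_p^{\bsnu}-\bsb_q^{\bsnu}|\leq C_b(\hat{h}_\ell+\hat{h}_{\ell-1})\bsb^{\bsnu}$. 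With that chaining step made explicit, your proof is complete; without it, the key inequality rests on a version of Assumption \ref{as:b} that the paper does not actually assume.
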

\begin{proof}
	The point $x$ has a set of adjacent points on the level $\ell$ and the level $\ell-1$. Since the CE grids are nested, there exists at least one common adjacent point, which we denote by $x_0$. For this point in particular, it holds that $\partial^\bsnu \aysl(x_0) = \partial^\bsnu \aysll(x_0)$.
	Hence, we have 
	\begin{align}
		|\partial^{\bsnu}(\aysl(x)-\aysll(x))| &= |\partial^{\bsnu}(\aysl(x)-\aysl(x_0)) - \partial^{\bsnu}(\aysll(x)-\aysll(x_0))| \notag \\
		& \leq |\partial^{\bsnu}(\aysl(x)-\aysl(x_0))| + |\partial^{\bsnu}(\aysll(x)-\aysll(x_0))|
		\label{eq:lemp1}
	\end{align}
	Observe that $\partial^\bsnu \aysl(x)$ is again a linear interpolation of $\aysl(x_{k,x})\bsb_k$, with $x_{k,x}, k = 1,\ldots,2^d$ the CE grid points surrounding $x$ on grid $\ell$.
	Therefore, with $\tilde{x}_\ell$ being the neighboring CE grid point that maximizes $|\partial^{\bsnu}(\aysl(x_{k,x})-\aysl(x_0))|$, Lemma \ref{lem:adjacentCEpoints} yields 
	\begin{equation}
		|\partial^{\bsnu}(\aysl(x)-\aysl(x_0))| \leq |\partial^{\bsnu}(\aysl(\tilde{x}_\ell)-\aysl(x_0))| 
		\leq \hat{h}_\ell (\sqrt{d} C_d+C_b) a^\bsy_{\max} \bsb^{\bsnu},
	\end{equation}
	and the analogue for level $\ell-1$. 
	Then \eqref{eq:lemp1} can be bounded as
	\begin{align*}
		|\partial^{\bsnu}(\aysl(x)-\aysll(x))| & \leq (\hat{h}_\ell + \hat{h}_{\ell-1}) (\sqrt{d} C_d+C_b) a^\bsy_{\max} \bsb^{\bsnu}. \qedhere
	\end{align*}

\end{proof}

\newcommand{\Ca}{C_a}
\noindent Again, due to Assumption \ref{as:Mhs}, $\hat{h}_\ell \simeq h_\ell$, implying
\begin{align} \label{eq:deriv_interpolation_error}
	|\partial^{\bsnu}(\aysl(x)-\aysll(x))| \leq \Ca h_{\ell-1} a^\bsy_{\max} \bsb^{\bsnu}.
\end{align}

\subsection{Other properties}
As stated in \cite[equation (9.3)]{kuo2016application}, the identity
\begin{align} \label{eq:counting}
\sum_{\bsm\leq\bsnu, |\bsm|=i}\binom{\bsnu}{\bsm} = \binom{|\bsnu|}{i} = \frac{|\bsnu|!}{i!(|\bsnu|-i)!}
\end{align}
follows from considering the number of ways to pick $i$ objects from a set of bags containing in total $|\bsnu|$ objects.  It then follows that
\begin{align*}
\sum_{\bsm\leq\bsnu} \binom{\bsnu}{\bsm} |\bsm|!|\bsnu-\bsm|! 
= \sum_{i=0}^{|\bsnu|}\sum_{\bsm\leq\bsnu, |\bsm|=i}\binom{\bsnu}{\bsm}i!(|\bsnu|-i)! = \sum_{i=0}^{|\bsnu|}|\bsnu|! = (|\bsnu|+1)!,
\end{align*}
as can be found in \cite[equation (9.4)]{kuo2016application}. Moreover, it follows that
\begin{align}\label{eq:counting2}
\sum_{\bsm\leq\bsnu} \binom{\bsnu}{\bsm} \frac{|\bsm|!}{(\ln{2})^\bsm} &= \sum_{i=0}^{|\bsnu|} \sum_{\bsm\leq\bsnu, |m|=i} \binom{\bsnu}{\bsm} \frac{i!}{(\ln{2})^i} = \sum_{i=0}^{|\bsnu|} \frac{|\bsnu|!}{(|\bsnu|-i)!} \frac{1}{(\ln{2})^i}\notag\\
& = |\bsnu|! \left(\frac{1}{0!(\ln 2)^{|\bsnu|}}  + \frac{1}{1!(\ln 2)^{|\bsnu|-1}} + \ldots + \frac{1}{|\bsnu|!(\ln 2)^{0}}  \right)\notag \\
&\leq \frac{|\bsnu|!}{(\ln 2)^{|\bsnu|}} \left(\frac{1}{0!(\ln 2)^{0}} + \frac{1}{1!(\ln 2)^{-1}} + \ldots + \frac{1}{|\bsnu|!(\ln 2)^{-|\bsnu|}}  \right) \notag\\
&\leq \frac{|\bsnu|!}{(\ln{2})^{|\bsnu|}} e^{\ln 2} = 2\frac{|\bsnu|!}{(\ln{2})^{|\bsnu|}}
\end{align}
and
\begin{align}\label{eq:counting3}
\sum_{\bsm\leq\bsnu, \bsm\neq\bsnu} \binom{\bsnu}{\bsm} \frac{(|\bsm|+1)!}{(\ln{2})^{|\bsm|}} \notag
&= \sum_{i=0}^{|\bsnu|-1}\sum_{\bsm\leq\bsnu, |\bsm|=i} \binom{\bsnu}{\bsm} \frac{(i+1)!}{(\ln{2})^{i}} \notag\\
&= |\bsnu|! \sum_{i=0}^{|\bsnu|-1} \frac{i+1}{(|\bsnu|-i)!(\ln{2})^{i}} \notag\\
&= |\bsnu|! \left(\frac{|\bsnu|}{1!(\ln 2)^{|\bsnu|-1}} + \frac{|\bsnu|-1}{2!(\ln 2)^{|\bsnu|-2}} + \ldots + \frac{1}{|\bsnu|!(\ln 2)^{0}}  \right)\notag \\
&\leq \frac{|\bsnu|!}{(\ln 2)^{|\bsnu|}} \left(\frac{|\bsnu|}{1!(\ln 2)^{-1}} + \frac{|\bsnu|}{2!(\ln 2)^{-2}} + \ldots + \frac{|\bsnu|}{|\bsnu|!(\ln 2)^{-|\bsnu|}}  \right) \notag\\
&\leq \frac{|\bsnu|!|\bsnu|(e^{\ln 2}-1)}{(\ln{2})^{|\bsnu|}} \leq \frac{(|\bsnu|+1)!}{(\ln{2})^{|\bsnu|}}\,,
\end{align}
By adding the $\bsm = \bsnu$ term on both sides of \eqref{eq:counting3} we get
\begin{align}\label{eq:counting4}
\sum_{\bsm\leq\bsnu} \binom{\bsnu}{\bsm} \frac{(|\bsm|+1)!}{(\ln{2})^{|\bsm|}} \leq 2 \frac{(|\bsnu|+1)!}{(\ln{2})^{|\bsnu|}}\,.
\end{align}

\avb{Perhaps this is the place to formulate \cite[Lemma 5]{kuo2017multilevel}.}

\newcommand{\Czg}{C_{zg}}

\subsection{Bounds on partial derivatives of $u^\bsy_s$ and $q^\bsy_s$}
The error estimates for the QMC method require bounds on the partial derivatives of the integrands in \eqref{eq:telescopic_sum}, as we will see in \S\ref{sec:int_error} below. We introduce the frequently used notation
\begin{align*}
C^\bsy_q := \max{(1,\frac{c_1c_2}{a_{\min}^\bsy})} \qquad \text{and} \qquad \Czg:= (\|z\|_{H^{-1}(D)} + \|g\|_{H^{-1}(D)})\,,
\end{align*}
where $c_1,c_2>0$ are the embedding constants from \eqref{eq:embconst1}--\eqref{eq:embconst2}. Note that $C_q^\bsy \leq 1 + \frac{c_1c_2}{a_{\min}^\bsy} \in L^p(\Omega)$ because $1/a_{\min}^\bsy \in L^p(\Omega)$ for all $p \in [1,\infty)$. 

\begin{lemma} \label{lem:bound_deriv}
Let $u_s^\bsy$ and $q_s^\bsy$ be as defined previously in (\ref{eq:PDE_state_sampled})--(\ref{eq:PDE_adjoint_sampled}). Then
\begin{align}\label{eq:regforward}
\|\partial^\bsnu u_s^\bsy\|_{H_0^1(D)} &\leq |\bsnu|! \frac{\bsb^\bsnu}{(\ln{2})^{|\bsnu|}} \frac{\|z\|_{H^{-1}(D)}}{a_{\min}^\bsy}\,,\\
\|\partial^\bsnu \hqy\|_{H_0^1(D)} &\leq (|\bsnu|+1)! \frac{\bsb^\bsnu}{(\ln{2})^{|\bsnu|}} \frac{C_q^\bsy}{a_{\min}^\bsy} (\|z\|_{H^{-1}(D)} + \|g\|_{H^{-1}(D)})\,,\notag
\end{align}
with $\bsb$ as defined in (\ref{eq:b_j_def}).
\end{lemma}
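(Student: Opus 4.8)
The plan is to prove the two bounds by induction on $|\bsnu|$, treating the state equation first and then the adjoint, since the adjoint's right-hand side depends on $u_s^\bsy$.

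For the state bound \eqref{eq:regforward}, I would differentiate the weak form \eqref{eq:PDE_state_sampled} $\bsnu$ times with respect to $\bsy$. Since the right-hand side $\int_D z\,v\,\mathrm dx$ is independent of $\bsy$, applying the Leibniz rule to $\int_D \asy \nabla u_s^\bsy \cdot \nabla v\,\mathrm dx$ gives, for $\bsnu \neq \bs 0$,
\begin{align*}
\int_D \asy \nabla(\partial^\bsnu u_s^\bsy) \cdot \nabla v\,\mathrm dx = -\sum_{\bs 0 \neq \bsm \leq \bsnu} \binom{\bsnu}{\bsm} \int_D (\partial^\bsm \asy)\,\nabla(\partial^{\bsnu-\bsm} u_s^\bsy)\cdot \nabla v\,\mathrm dx.
\end{align*}
The standard move is to test with $v = \partial^\bsnu u_s^\bsy$, use coercivity $\int_D \asy |\nabla v|^2 \geq a_{\min}^\bsy \|v\|_{H_0^1(D)}^2$ on the left, and on the right bound $\partial^\bsm \asy$ using the crucial pointwise estimate \eqref{eq:reg_a}, namely $|\partial^\bsm \asy| \leq \asy\, \bsb^\bsm$. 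This lets one factor out $\asy$ from each term on the right and absorb it against the coercivity constant, yielding a recursion $\|\partial^\bsnu u_s^\bsy\|_{H_0^1} \leq \sum_{\bs 0 \neq \bsm \leq \bsnu}\binom{\bsnu}{\bsm}\bsb^\bsm \|\partial^{\bsnu-\bsm} u_s^\bsy\|_{H_0^1}$. Plugging in the inductive hypothesis and invoking the combinatorial identity \eqref{eq:counting2} (which produces exactly the factor $|\bsnu|!/(\ln 2)^{|\bsnu|}$) closes the induction.

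For the adjoint bound, I would differentiate \eqref{eq:PDE_adjoint_sampled} similarly, but now the right-hand side $\int_D (u_s^\bsy - g)v\,\mathrm dx$ is $\bsy$-dependent, so $\partial^\bsnu$ of it produces $\int_D \partial^\bsnu u_s^\bsy\, v\,\mathrm dx$ (the $g$ term survives only for $\bsnu = \bs 0$). Testing with $v = \partial^\bsnu \hqy$ and using coercivity gives two contributions: the diffusion-coefficient terms handled exactly as before, and the new source term, which I bound via the embeddings \eqref{eq:embconst1}--\eqref{eq:embconst2} and the already-established state estimate. The appearance of the factorial $(|\bsnu|+1)!$ rather than $|\bsnu|!$ comes from the extra factor $|\bsnu|+1$ generated when the state bound (with its own $|\bsnu|!$) is fed through the counting identity \eqref{eq:counting4}, which is precisely tailored to this computation.

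\textbf{The main obstacle} is bookkeeping the combinatorial sums so that the stated constants $|\bsnu|!/(\ln 2)^{|\bsnu|}$ and $(|\bsnu|+1)!/(\ln 2)^{|\bsnu|}$ emerge cleanly rather than as loose over-estimates; this is exactly why the identities \eqref{eq:counting2}--\eqref{eq:counting4} were established beforehand, and matching each recursion term to the correct identity is the delicate part. A secondary subtlety is correctly separating the source term in the adjoint equation and verifying that the embedding constants assemble into $C_q^\bsy$ and $\Czg$ as defined; the factor $\max(1, c_1 c_2/a_{\min}^\bsy)$ in $C_q^\bsy$ should absorb both the $\bsnu = \bs 0$ base case (involving $g$) and the recursive coupling to the state.
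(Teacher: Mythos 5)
Your plan follows essentially the same route as the paper: differentiate the weak forms \eqref{eq:PDE_state_sampled}--\eqref{eq:PDE_adjoint_sampled} by the Leibniz rule, test with $\partial^\bsnu u_s^\bsy$ (resp.\ $\partial^\bsnu q_s^\bsy$), isolate the $\bsm=\bsnu$ term, control $\partial^{\bsnu-\bsm}\asy/\asy$ by $\bsb^{\bsnu-\bsm}$ via \eqref{eq:reg_a}, and resolve the resulting recursion combinatorially, with the state bound feeding the adjoint's source term through the embeddings \eqref{eq:embconst1}--\eqref{eq:embconst2}. The paper does not unroll the induction by hand but invokes \cite[Lemma 5]{kuo2017multilevel} to pass from the recursion $\mathbb{A}_\bsnu\le\sum_{\bsm\ne\bsnu}\binom{\bsnu}{\bsm}\bsb^{\bsnu-\bsm}\mathbb{A}_\bsm+\mathbb{B}_\bsnu$ to $\mathbb{A}_\bsnu\le\sum_{\bsk\le\bsnu}\binom{\bsnu}{\bsk}\frac{|\bsk|!}{(\ln 2)^{|\bsk|}}\bsb^\bsk\mathbb{B}_{\bsnu-\bsk}$, and then uses the exact identity $\sum_{\bsk\le\bsnu}\binom{\bsnu}{\bsk}|\bsk|!\,|\bsnu-\bsk|!=(|\bsnu|+1)!$ for the adjoint; your direct induction is the same argument in expanded form. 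One substantive correction: the recursion with coefficient exactly $\bsb^{\bsnu-\bsm}$ does \emph{not} hold for the plain $\|\cdot\|_{H_0^1(D)}$ norm as you wrote it, since bounding $\int_D \asy\,\nabla(\partial^\bsm u_s^\bsy)\cdot\nabla(\partial^\bsnu u_s^\bsy)\,\mathrm dx$ by plain $H_0^1$ norms costs a factor $a_{\max}^\bsy/a_{\min}^\bsy$ at every step of the recursion and would ruin the stated constant. The recursion must be run in the weighted energy norm $\|(\asy)^{1/2}\nabla(\cdot)\|_{L^2(D)}$, splitting $\asy=(\asy)^{1/2}(\asy)^{1/2}$ in the Cauchy--Schwarz step, and one converts to $H_0^1(D)$ only at the very end via $(a_{\min}^\bsy)^{1/2}\|v\|_{H_0^1(D)}\le\|(\asy)^{1/2}\nabla v\|_{L^2(D)}$; the two half-powers of $a_{\min}^\bsy$ (one from this conversion, one from the inhomogeneous term) then combine into the single $1/a_{\min}^\bsy$ in the statement. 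Finally, the counting identities you need are the strict-subset variants: the $\bsm\ne\bsnu$ part of \eqref{eq:counting2} is bounded by $|\bsnu|!/(\ln 2)^{|\bsnu|}$ without the factor $2$, and for the adjoint the inductive step requires \eqref{eq:counting3} rather than \eqref{eq:counting4} --- the extra factor $2$ in \eqref{eq:counting4} would prevent the induction from closing with the constant $(|\bsnu|+1)!$.
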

\begin{proof}
Let $f^\bsy := u_s^\bsy - g$, then taking the $\bsnu$-th derivative of (\ref{eq:PDE_adjoint_sampled}) yields by Leibniz product rule
\begin{align*}
\sum_{\bsm \leq \bsnu} \begin{pmatrix} \bsnu\\ \bsm \end{pmatrix} \int_D \partial^{\bsnu-\bsm} \asy(x) \nabla \partial^\bsm \hqy(x) \cdot \nabla v(x) \,\mathrm dx = \int_D \partial^{\bsnu} f^\bsy(x)\, v(x)\,\mathrm dx
\end{align*}
for all $v\in H_0^1(D)$.
Setting $v = \partial^\bsnu \hqy$ and separating out the $\bsnu = \bsm$ term gives
\begin{align}
\int_D \asy(x) &|\nabla \partial^\bsnu \hqy(x)|^2\,\mathrm dx\\
&= -\sum_{\bsm\leq \bsnu, \bsm \neq \bsnu} \begin{pmatrix} \bsnu \nonumber\\ 
\bsm \end{pmatrix} \int_D \partial^{\bsnu-\bsm} \asy(x) \nabla \partial^\bsm \hqy(x) \cdot \nabla \partial^\bsnu \hqy(x) \,\mathrm dx \nonumber\\ 
&\quad + \int_D \partial^{\bsnu} f^\bsy(x) \partial^\bsnu \hqy(x)\,\mathrm dx \nonumber\\
&= -\sum_{\bsm\leq \bsnu, \bsm \neq \bsnu} \begin{pmatrix} \bsnu \nonumber\\ 
\bsm \end{pmatrix} \int_D \left(\frac{\partial^{\bsnu-\bsm} \asy}{\asy}\right) \asy(x) \nabla \partial^\bsm \hqy(x) \cdot \nabla \partial^\bsnu \hqy(x) \,\mathrm dx \nonumber\\ 
&\quad + \int_D \partial^{\bsnu} f^\bsy(x) \partial^\bsnu \hqy(x)\,\mathrm dx\nonumber\\
&\leq \sum_{\bsm\leq \bsnu, \bsm \neq \bsnu} \begin{pmatrix} \bsnu \nonumber\\ 
\bsm \end{pmatrix}\left(\max_{x\in D}\left|\frac{\partial^{\bsnu-\bsm} \asy}{\asy}\right|\right) \left| \int_D  \asy(x) \nabla \partial^\bsm \hqy(x) \cdot \nabla \partial^\bsnu \hqy(x) \,\mathrm dx\right| \nonumber\\ 
&\quad + \left|\int_D \partial^{\bsnu} f^\bsy(x) \partial^\bsnu \hqy(x)\,\mathrm dx\right|\,.\label{eq:proof.lem:bound_deriv.1}
\end{align}
We can now use the Cauchy--Schwarz inequality on both integrals above. For the right-hand side in particular we get, $|\int_D \partial^{\bsnu} f^\bsy(x) \partial^\bsnu \hqy(x)\,\mathrm dx| \leq \|\partial^\bsnu f^\bsnu\|_{H^{-1}(D)}\|\partial^\bsnu \hqy\|_{H_0^1(D)}$ and furthermore 
\begin{equation}\label{eq:weightednorm}
\|\partial^\bsnu \hqy\|_{H_0^1(D)} = \bigg( \int_D |\nabla \partial^\bsnu \hqy(x)|^2 \mathrm dx \bigg)^{1/2} \leq \frac{1}{(a_{\min}^\bsy)^{1/2}}\bigg( \int_D \asy(x) |\nabla \partial^\bsnu \hqy(x)|^2 \mathrm dx \bigg)^{1/2}
\end{equation}
such that (\ref{eq:proof.lem:bound_deriv.1}) can be bounded using \eqref{eq:reg_a} by
\begin{align*}
&\int_D \asy(x) |\nabla (\partial^\bsnu \hqy(x))|^2\,\mathrm dx \\
& \leq \sum_{\bsm\leq \bsnu, \bsm \neq \bsnu} \begin{pmatrix} \bsnu\\ \bsm \end{pmatrix} \bsb^{\bsnu-\bsm}\, \bigg( \int_D \asy(x) |\nabla(\partial^\bsm \hqy(x))|^2 \mathrm dx\bigg)^{1/2} \bigg( \int_D \asy(x) |\nabla(\partial^\bsnu \hqy(x))|^2 \mathrm dx\bigg)^{1/2} \\ 
&\quad + \|\partial^\bsnu f^\bsnu\|_{H^{-1}(D)} \frac{1}{(a_{\min}^\bsy)^{1/2}} \bigg( \int_D \asy(x) |\nabla(\partial^\bsnu \hqy(x))|^2 \mathrm dx \bigg)^{1/2}.
\end{align*}
Noting that $\int_D \asy(x) |\nabla (\partial^\bsnu \hqy(x))|^2\,\mathrm dx = \|(\asy)^{1/2} \nabla(\partial^\bsnu \hqy)\|^2_{L^2(D)}$ and cancelling out a common factor, we obtain
\begin{align}\label{eq:recursion_state}
\underbrace{\|(\asy)^{1/2} \nabla(\partial^\bsnu \hqy)\|_{L^2(D)}}_{\mathbb{A}_\bsnu} &\leq \sum_{\bsm\leq \bsnu, \bsm \neq \bsnu} \begin{pmatrix} \bsnu\\ \bsm \end{pmatrix} \bsb^{\bsnu-\bsm}\, \underbrace{\|(\asy)^{1/2} \nabla(\partial^\bsm \hqy)\|_{L^2(D)}}_{\mathbb{A}_{\bsm}}\notag\\ 
&\quad+ \underbrace{(a_{\min}^\bsy)^{-1/2}  \left( \| \partial^\bsnu f^\bsy\|_{H^{-1}(D)} \right)}_{\mathbb{B}_\bsnu}.
\end{align}
We may apply \cite[Lemma 5]{kuo2017multilevel} to get
\begin{align}
\|(\asy)^{1/2} &\nabla(\partial^\bsnu \hqy)\|_{L^2(D)}\notag\\
&\leq \sum_{\bsk\leq \bsnu} \begin{pmatrix} \bsnu\\ \bsk \end{pmatrix} \frac{|\bsk|!}{(\ln{2})^{|\bsk|}}\, \bsb^\bsk \frac{1}{(a_{\min}^\bsy)^{1/2}}  \left( \|\partial^{\bsnu-\bsk} f^\bsy\|_{H^{-1}(D)} \right)\notag\\
&\leq \sum_{\bsk\leq \bsnu} \begin{pmatrix} \bsnu\\ \bsk \end{pmatrix} \frac{|\bsk|!}{(\ln{2})^{|\bsk|}}\, \bsb^\bsk \frac{1}{(a_{\min}^\bsy)^{1/2}}  \left( \|\partial^{\bsnu-\bsk} u_s^\bsy\|_{H^{-1}(D)} + \|\partial^{\bsnu-\bsk}g\|_{H^{-1}(D)} \right)\notag\\
&\leq \sum_{\bsk\leq \bsnu} \begin{pmatrix} \bsnu\\ \bsk \end{pmatrix} \frac{|\bsk|!}{(\ln{2})^{|\bsk|}}\, \bsb^\bsk \frac{1}{(a_{\min}^\bsy)^{1/2}}  \left( c_1c_2\|\partial^{\bsnu-\bsk} u_s^\bsy\|_{H_0^1(D)} + \|\partial^{\bsnu-\bsk}g\|_{H^{-1}(D)} \right)\label{eq:cont_adjoint_est}
\end{align}
for all multi-indices $\bsnu \in \mathbb{N}_0^s$. 
In order to further estimate \eqref{eq:cont_adjoint_est}, we need an estimate for the partial derivatives of the state PDE solution $u_s^\bsy$. This can be obtained as follows: beginning this proof with the $\bsnu$-th partial derivatives of the weak formulation of \eqref{eq:PDE_state_sampled} (instead of \eqref{eq:PDE_adjoint_sampled}), one gets an analogous recursion to \eqref{eq:recursion_state} with $\hqy$ replaced by $u_s^\bsy$ and $f^\bsy$ replaced by the control $z$:
\begin{align*}
\underbrace{\|(\asy)^{1/2} \nabla(\partial^\bsnu u_s^\bsy)\|_{L^2(D)}}_{\mathbb{A}_\bsnu} &\leq \sum_{\bsm\leq \bsnu, \bsm \neq \bsnu} \begin{pmatrix} \bsnu\\ \bsm \end{pmatrix} \bsb^{\bsnu-\bsm} \underbrace{\|(\asy)^{1/2} \nabla(\partial^\bsm u_s^\bsy)\|_{L^2(D)}}_{\mathbb{A}_{\bsm}} + \underbrace{\frac{\| \partial^\bsnu z\|_{H^{-1}(D)}}{(a_{\min}^\bsy)^{1/2}}}_{\mathbb{B}_\bsnu}.
\end{align*}
In this case, the application of \cite[Lemma 5]{kuo2017multilevel} gives 
\begin{align*}
\|(\asy)^{1/2} \nabla(\partial^\bsnu u_s^\bsy)\|_{L^2(D)}&\leq \sum_{\bsk\leq \bsnu} \begin{pmatrix} \bsnu\\ \bsk \end{pmatrix} |\bsk|!\frac{\bsb^\bsk}{(\ln{2})^{|\bsk|}}  \frac{ \|\partial^{\bsnu-\bsk} z\|_{H^{-1}(D)}}{(a_{\min}^\bsy)^{1/2}} = |\bsnu|!\frac{\bsb^\bsnu}{(\ln{2})^{|\bsnu|}} \frac{\|z\|_{H^{-1}(D)}}{(a_{\min}^\bsy)^{1/2}}\,.
\end{align*}
Then, \eqref{eq:regforward} follows directly from $(a_{\min}^\bsy)^{1/2} \|\partial^\bsnu u_s^\bsy\|_{H_0^1(D)} \leq \|(\asy)^{1/2} \nabla(\partial^\bsnu u_s^\bsy)\|_{L^2(D)}$. Using \eqref{eq:regforward} we can now further estimate \eqref{eq:cont_adjoint_est} to get
\begin{align*}
&\|(\asy)^{1/2} \nabla(\partial^\bsnu \hqy)\|_{L^2(D)} \\
&\quad \leq \sum_{\bsk\leq \bsnu} \begin{pmatrix} \bsnu\\ \bsk \end{pmatrix} \frac{|\bsk|!}{(\ln{2})^{|\bsk|}}\, \bsb^\bsk \frac{1}{(a_{\min}^\bsy)^{1/2}}  \bigg( c_1c_2|{\bsnu-\bsk}|! \frac{\bsb^{\bsnu-\bsk}}{(\ln{2})^{|\bsnu-\bsk|}} \frac{\|z\|_{H^{-1}(D)}}{a_{\min}} + \|\partial^{\bsnu-\bsk}g\|_{H^{-1}(D)} \bigg).
\end{align*}
Note that $g$ is independent of $\bsy$, i.e., we have for $\bsnu\leq\bsk$
\begin{align*}
\|\partial^{\bsnu-\bsk}g\|_{H^{-1}(D)}
&= \begin{cases}
\|g\|_{H^{-1}(D)} & \bsnu = \bsk\\
0 & \text{else}
\end{cases} 
\\ &\leq |\bsnu-\bsk|! \frac{\bsb^{\bsnu-\bsk}}{(\ln{2})^{|\bsnu-\bsk|}} \|g\|_{H^{-1}(D)} .
\end{align*}
This and setting $C_q^\bsy := \max{\big(\frac{c_1c_2}{a_{\min}^\bsy},1\big)}$ and $C_{zg} := \|z\|_{H^{-1}(D)} + \|g\|_{H^{-1}(D)}$ gives
\begin{align*}
\|(\asy)^{1/2} \nabla(\partial^\bsnu \hqy)\|_{L^2(D)} & \leq \sum_{\bsk\leq \bsnu} \begin{pmatrix} \bsnu\\ \bsk \end{pmatrix} |\bsk|! \frac{\bsb^\bsk}{(\ln{2})^{|\bsk|}} |\bsnu-\bsk|! \frac{\bsb^{\bsnu-\bsk}}{(\ln{2})^{|\bsnu-\bsk|}} \frac{C_q^\bsy}{(a_{\min}^\bsy)^{1/2}} C_{zg}\\
&= (|\bsnu|+1)!\, \frac{\bsb^\bsnu}{(\ln{2})^{|\bsnu|}} \frac{C_q^\bsy}{(a_{\min}^\bsy)^{1/2}} C_{zg}\,,
\end{align*}
where the last equality follows from \cite[equation 9.4]{kuo2016application}\avb{This is where we use that equality}. The assertion then follows from \eqref{eq:weightednorm}.
\end{proof}

\begin{lemma} \label{lemma:Laplace}
Let $\Delta$ be the Laplace operator. Under the assumptions of the previous lemma, it holds that
\begin{align*}
\|\Delta (\partial^\bsnu \hqy)\|_{L^2(D)} \leq \frac{\bsb^\bsnu}{(\ln 2)^{|\bsnu|}} \frac{(|\bsnu|+4)!}{(|\bsnu|+2)(|\bsnu|+3)} \frac{\widetilde{C}^\bsy C_q^\bsy}{a_{\min}^\bsy}(\|z\|_{H^{-1}(D)}+\|g\|_{H^{-1}(D)})\,,
\end{align*}
where $\widetilde{C}^\bsy = \max{\big(1,2\,\tfrac{|\asy|_{C^1(\overline{D})}}{a_{\min}^\bsy}\big)}$.
\end{lemma}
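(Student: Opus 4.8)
I would mirror the proof of Lemma~\ref{lem:bound_deriv}, but work this time from the strong form of the adjoint equation, reading off a recursion for $\mathbb{D}_\bsnu := \|\Delta\partial^\bsnu\hqy\|_{L^2(D)}$ with exactly the same kernel as in (\ref{eq:recursion_state}). Writing $f^\bsy := u_s^\bsy - g$, equation (\ref{eq:PDE_adjoint_sampled}) is equivalent to the cell-wise identity $-\nabla\cdot(\asy\nabla\hqy)=f^\bsy$. Applying $\partial^\bsnu$ with the Leibniz rule and the product rule $\nabla\cdot(\phi\nabla\psi)=\phi\Delta\psi+\nabla\phi\cdot\nabla\psi$, isolating the $\bsm=\bsnu$ term and dividing by $\asy$ gives
\begin{align*}
\Delta\partial^\bsnu\hqy = -\frac{\partial^\bsnu f^\bsy}{\asy} - \frac{\nabla\asy}{\asy}\cdot\nabla\partial^\bsnu\hqy - \sum_{\bsm\leq\bsnu,\,\bsm\neq\bsnu}\binom{\bsnu}{\bsm}\bigg(\frac{\partial^{\bsnu-\bsm}\asy}{\asy}\,\Delta\partial^\bsm\hqy + \frac{\nabla\partial^{\bsnu-\bsm}\asy}{\asy}\cdot\nabla\partial^\bsm\hqy\bigg).
\end{align*}
Here $\Delta$ is the broken (cell-wise) Laplacian, which is legitimate since $\asy$ is smooth on each interpolation cell and \emph{no} second spatial derivative of $\asy$ ever appears. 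Taking $L^2(D)$ norms, bounding $\|\partial^{\bsnu-\bsm}\asy/\asy\|_{L^\infty}\le\bsb^{\bsnu-\bsm}$ by (\ref{eq:reg_a}) and $1/\asy\le 1/a_{\min}^\bsy$, yields $\mathbb{D}_\bsnu \le \sum_{\bsm\leq\bsnu,\bsm\neq\bsnu}\binom{\bsnu}{\bsm}\bsb^{\bsnu-\bsm}\mathbb{D}_\bsm + R_\bsnu$, i.e.\ a recursion with the kernel of (\ref{eq:recursion_state}) and a source $R_\bsnu$ collecting the $\partial^\bsnu f^\bsy$ term and all gradient terms.

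\textbf{Bounding the source.} The key is to keep the coefficient ratios intact so that the pointwise estimates (\ref{eq:reg_a}), (\ref{eq:grad_reg_a}) and (\ref{eq:grad_a_bound}) apply directly, thereby \emph{avoiding} any $a_{\max}^\bsy$. Writing $\nabla\partial^{\bsnu-\bsm}\asy/\asy = \nabla(\partial^{\bsnu-\bsm}\asy/\asy) + (\partial^{\bsnu-\bsm}\asy/\asy)(\nabla\asy/\asy)$ and using (\ref{eq:reg_a})--(\ref{eq:grad_reg_a}) gives $|\nabla\partial^{\bsnu-\bsm}\asy/\asy|\le 3\bsb^{\bsnu-\bsm}|\asy|_{C^1(\overline D)}/a_{\min}^\bsy$, while $|\nabla\asy/\asy|\le|\asy|_{C^1(\overline D)}/a_{\min}^\bsy$. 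The factors $\|\partial^\bsm\hqy\|_{H_0^1(D)}$ are controlled by Lemma~\ref{lem:bound_deriv}, and $\|\partial^\bsnu f^\bsy\|_{L^2(D)}\le c_2\|\partial^\bsnu u_s^\bsy\|_{H_0^1(D)}$ (plus $\|g\|_{L^2(D)}$ when $|\bsnu|=0$) by (\ref{eq:regforward}). Every $|\asy|_{C^1(\overline D)}/a_{\min}^\bsy$ is absorbed into $\widetilde C^\bsy = \max(1,2|\asy|_{C^1(\overline D)}/a_{\min}^\bsy)$, and the extra $(a_{\min}^\bsy)^{-1}$ from the division combines with $C_q^\bsy$ into $C_q^\bsy/a_{\min}^\bsy$; the inner $\bsm$-sums over the gradient terms are evaluated with (\ref{eq:counting3})--(\ref{eq:counting4}), producing a factor of order $(|\bsnu|+1)!$.

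\textbf{Solving the recursion.} Since the kernel coincides with that of Lemma~\ref{lem:bound_deriv}, I would apply \cite[Lemma 5]{kuo2017multilevel} once more to get $\mathbb{D}_\bsnu \le \sum_{\bsk\le\bsnu}\binom{\bsnu}{\bsk}\tfrac{|\bsk|!}{(\ln 2)^{|\bsk|}}\bsb^\bsk R_{\bsnu-\bsk}$. Substituting the bound on $R_{\bsnu-\bsk}$ produces convolutions of the types $\sum_{\bsk\le\bsnu}\binom{\bsnu}{\bsk}|\bsk|!\,|\bsnu-\bsk|!$ and $\sum_{\bsk\le\bsnu}\binom{\bsnu}{\bsk}|\bsk|!\,(|\bsnu-\bsk|+1)!$, which by the counting identity (\ref{eq:counting}) collapse to $(|\bsnu|+1)!$ and $\tfrac{(|\bsnu|+2)!}{2}$ respectively. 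The resulting factorial prefactor is of the form $c\,(|\bsnu|+1)! + c'\,(|\bsnu|+2)(|\bsnu|+1)!$, which for appropriate constants is dominated by $(|\bsnu|+4)(|\bsnu|+1)! = \tfrac{(|\bsnu|+4)!}{(|\bsnu|+2)(|\bsnu|+3)}$, yielding the stated bound with the prefactor $\tfrac{\widetilde C^\bsy C_q^\bsy}{a_{\min}^\bsy}\Czg$.

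\textbf{Main obstacle.} I expect two sticking points. The first is analytic: justifying the passage to the broken strong form, i.e.\ that each $\partial^\bsm\hqy$ admits a cell-wise $L^2$ Laplacian so that $\mathbb{D}_\bsm$ is finite; this rests on interior elliptic regularity on the cells where $\asy$ is smooth, with flux continuity across interfaces encoded in the weak form. The second is the bookkeeping of constants and, in particular, the $|\bsnu|=0$ source term, which genuinely produces $\|g\|_{L^2(D)}$ rather than $\|g\|_{H^{-1}(D)}$; one therefore has to assume $g\in L^2(D)$ and read this norm into $\Czg$, and then track the several contributions tightly enough that the $|\bsnu|$-dependent prefactor lands exactly on $\tfrac{(|\bsnu|+4)!}{(|\bsnu|+2)(|\bsnu|+3)}$.
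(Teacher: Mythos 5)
Your proposal follows essentially the same route as the paper: differentiate the strong form of the adjoint equation with the Leibniz rule, isolate the top-order term to obtain a recursion with kernel $\binom{\bsnu}{\bsm}\bsb^{\bsnu-\bsm}$, bound the source via Lemma~\ref{lem:bound_deriv} together with the counting identities, and close the recursion with \cite[Lemma 5]{kuo2017multilevel}. The one substantive difference is your choice of recursion variable: the paper recurses on the weighted flux divergence $\|(\asy)^{-1/2}\nabla\cdot(\asy\nabla\partial^\bsnu\hqy)\|_{L^2(D)}$ and converts to the Laplacian only at the very end, so that the coefficient gradients enter solely through $\nabla\big(\partial^{\bsnu-\bsm}\asy/\asy\big)$, bounded by $2\,\bsb^{\bsnu-\bsm}|\asy|_{C^1(\overline{D})}/a_{\min}^\bsy$ via \eqref{eq:grad_reg_a}, whereas recursing directly on $\|\Delta\partial^\bsnu\hqy\|_{L^2(D)}$ produces the factor $3$ you compute and hence lands on $\max(1,3|\asy|_{C^1(\overline{D})}/a_{\min}^\bsy)$ rather than the stated $\widetilde{C}^\bsy$ --- harmless for every downstream use (the lemma is only invoked inside $\lesssim$ bounds and absorbed into $C_P$), but not the exact constant of the statement. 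Your observation that the $|\bsnu|=\boldsymbol{0}$ source genuinely requires $\|g\|_{L^2(D)}$ rather than $\|g\|_{H^{-1}(D)}$ is correct, and in fact applies equally to the paper's own proof, which silently makes the same substitution.
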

\begin{proof}
We have
\begin{align*}
\partial^\bsnu f^\bsy(x) &= \partial^\bsnu \big(-\nabla \cdot (\asy(\bsx) \nabla \hqy(\bsx))\big) \\
&=  -\nabla \cdot \partial^\bsnu (\asy(\bsx) \nabla \hqy(\bsx))\,.
\end{align*}
Thus we get by Leibniz product rule that
\begin{align*}
- \nabla \cdot \partial^\bsnu (\asy \nabla \hqy) = - \nabla \cdot \bigg( \sum_{\bsm \leq \bsnu} \binom{\bsnu}{\bsm} (\partial^{\bsnu-\bsm} \asy) \nabla (\partial^\bsm \hqy)\bigg) = \partial^{\bsnu} f^\bsy.
\end{align*}
Separating out the $\bsm = \bsnu$ term yields
\begin{align*}
k_\bsnu :=&\, \nabla \cdot (\asy \nabla(\partial^\bsnu \hqy))\\ =& - \nabla \cdot \bigg( \sum_{\bsm \leq \bsnu, \bsm \neq \bsnu} \binom{\bsnu}{\bsm} (\partial^{\bsnu-\bsm} \asy) \nabla (\partial^\bsm \hqy) \bigg) - \partial^\bsnu f^\bsy\\
=& -\sum_{\bsm \leq \bsnu, \bsm \neq \bsnu} \binom{\bsnu}{\bsm} \nabla \cdot \bigg( \frac{\partial^{\bsnu-\bsm} \asy}{\asy} (\asy \nabla( \partial^\bsm \hqy)) \bigg) -\partial^\bsnu f^\bsy\\
=& - \sum_{\bsm \leq \bsnu, \bsm \neq \bsnu} \binom{\bsnu}{\bsm}  \bigg( \frac{\partial^{\bsnu-\bsm}\asy}{\asy} k_{\bsm} + \nabla \bigg(\frac{\partial^{\bsnu-\bsm}\asy}{\asy}\bigg) \cdot (\asy \nabla(\partial^\bsm \hqy)) \bigg) - \partial^\bsnu f^\bsy\,,
\end{align*}
where we used $\nabla \cdot (AB) = A\nabla \cdot B + \nabla A \cdot B$ in the last equality.
We can multiply $k_\bsnu$ by $(\asy)^{-1/2}$ and obtain the bound
\begin{align*}
\|(\asy)^{-1/2} k_\bsnu\|_{L^2(D)} \leq \sum_{\bsm\leq\bsnu, \bsm\neq\bsnu} \binom{\bsnu}{\bsm} \bigg( \max_{x\in D}\bigg|\frac{\partial^{\bsnu-\bsm}\asy}{\asy}\bigg| \|(\asy)^{-1/2} k_\bsm\|_{L^2(D)}\\
 + \max_{x\in D}\bigg|\nabla \bigg(\frac{\partial^{\bsnu-\bsm}\asy}{\asy}\bigg)\bigg| \|(\asy)^{1/2} \nabla (\partial^\bsm \hqy)\|_{L^2(D)}\bigg)
+ \|(\asy)^{-1/2} \partial^\bsnu f^\bsy\|_{L^2(D)}.
\end{align*}
From the assumption that $g\in L^2(D)$ and $z \in H^{-1}(D)$ implies $k_{\boldsymbol{0}} \in L^2(D)$. From the inequality above we then deduce by induction w.r.t.~$|\bsnu|$ that $(\asy)^{-1/2} k_\bsnu$ and thus by Assumption \ref{assump:diffcoeff} also $k_\bsnu \in L^2(D)$ for all multi-indices $\bsnu \in \mathbb{N}_0^s$. 
Using the properties (\ref{eq:reg_a}) and (\ref{eq:grad_reg_a}) of $\asy$, allows to reformulate the previous inequality as
\begin{align*}
\underbrace{\|(\asy)^{-1/2} k_\bsnu\|_{L^2(D)}}_{\mathbb{A}_\bsnu} \leq \sum_{\bsm\leq\bsnu, \bsm\neq\bsnu} \binom{\bsnu}{\bsm} \bsb^{\bsnu-\bsm} \underbrace{\|(\asy)^{-1/2} k_\bsm\|_{L^2(D)}}_{\mathbb{A}_\bsm} \;+\; \mathbb{B}'_{\bsnu}\,,
\end{align*}
with
\begin{align*}
\mathbb{B}'_{\bsnu} := \hspace{-2pt} \sum_{\bsm\leq\bsnu, \bsm\neq\bsnu} \binom{\bsnu}{\bsm} \bigg( 2 \bsb^{\bsnu-\bsm}\frac{|\asy|_{C^1(\overline{D})}}{a_{\min}^\bsy} \|(\asy)^{1/2} \nabla (\partial^\bsm \hqy)\|_{L^2(D)}\bigg)
+ \|(\asy)^{-1/2} \partial^\bsnu f^\bsy\|_{L^2(D)}.
\end{align*}

\newcommand{\ha}{(\hat{a}^\bsy)}
In the next section of the proof we first find a simple expression $\mathbb{B}_{\bsnu}$ such that $\mathbb{B}'_{\bsnu} \leq \mathbb{B}_{\bsnu}$ and then apply \cite[Lemma 5]{kuo2017multilevel} to obtain
\begin{align}\label{eq:recbound}
\mathbb{A}_{\bsnu} \leq \sum_{\bsk \leq \bsnu} \binom{\bsnu}{\bsk} \frac{|\bsk|!}{(\ln{2})^{|\bsk|}} \bsb^\bsk \mathbb{B}_{\bsnu-\bsk}.
\end{align}
Introducing $C^\bsy := 2\,\frac{|\asy|_{C^1(\overline{D})}}{a_{\min}^\bsy}$ 
to ease readability, we find, using Lemma \ref{lem:bound_deriv},
\begin{align*}
\mathbb{B}'_{\bsnu} 
&\leq \sum_{\bsm\leq\bsnu, \bsm\neq\bsnu} \binom{\bsnu}{\bsm} \bigg(C^\bsy \bsb^{\bsnu-\bsm} \,\bsb^\bsm \frac{(|\bsm|+1)!}{(\ln{2})^{|\bsm|}} \frac{C_q^\bsy}{(a_{\min}^\bsy)^{1/2}}\Czg \bigg) 
+ \|(\asy)^{-1/2} \partial^\bsnu f^\bsy\|_{L^2(D)} \\
&=  \frac{C^\bsy C_q^\bsy \Czg}{(a_{\min}^\bsy)^{1/2}}  \bsb^{\bsnu} \sum_{\bsm\leq\bsnu, \bsm\neq\bsnu} \binom{\bsnu}{\bsm} \frac{(|\bsm|+1)!}{(\ln{2})^{|\bsm|}}
+ \|(\asy)^{-1/2} \partial^\bsnu f^\bsy\|_{L^2(D)}\,.
\end{align*}
Using \eqref{eq:counting3} finally leads to
\begin{equation*}
\mathbb{B}'_{\bsnu} \leq \mathbb{B}_{\bsnu} := \frac{C^\bsy C_q^\bsy\Czg}{(a_{\min}^\bsy)^{1/2}} \bsb^{\bsnu}\, \frac{(|\bsnu|+1)!}{(\ln 2)^{|\bsnu|}} + \|(\asy)^{-1/2}\partial^{\bsnu} (u_s^\bsy-g)\|_{L^2(D)}.
\end{equation*}
Now we apply \cite[Lemma 5]{kuo2017multilevel}, yielding
\begin{align*}
\mathbb{A}_{\bsnu} &\leq \sum_{\bsk \leq \bsnu} \binom{\bsnu}{\bsk} \frac{|\bsk|!}{(\ln{2})^{|\bsk|}} \bsb^\bsk \bigg( \frac{C^\bsy C_q^\bsy\Czg}{(a_{\min}^\bsy)^{1/2}} \bsb^{\bsnu-\bsk}\, \frac{(|\bsnu-\bsk|+1)!}{(\ln 2)^{|\bsnu-\bsk|}} + \|(\asy)^{-1/2}\partial^{\bsnu-\bsk} (u_s^\bsy-g)\|_{L^2(D)}\bigg) \\
&\leq \sum_{\bsk \leq \bsnu} \binom{\bsnu}{\bsk} \frac{|\bsk|!}{(\ln{2})^{|\bsk|}} \bsb^\bsk \bigg( \frac{C^\bsy C_q^\bsy\Czg}{(a_{\min}^\bsy)^{1/2}}  \bsb^{\bsnu-\bsk}\, \frac{(|\bsnu-\bsk|+1)!}{(\ln 2)^{|\bsnu-\bsk|}}
+ |\bsnu-\bsk|! \frac{\bsb^{\bsnu-\bsk}}{(\ln{2})^{|\bsnu-\bsk|}} \frac{C_q^\bsy\Czg}{(a_{\min}^\bsy)^{1/2}} \bigg)\\
&\leq \frac{C_q^\bsy\Czg}{(a_{\min}^\bsy)^{1/2}}  \max{(C^\bsy,1)}\, \bsb^\bsnu \sum_{\bsk \leq \bsnu} \binom{\bsnu}{\bsk}  \frac{|\bsk|!}{(\ln{2})^{|\bsk|}}  \bigg(\frac{(|\bsnu-\bsk|+1)!}{(\ln 2)^{|\bsnu-\bsk|}} + \frac{|\bsnu-\bsk|!}{(\ln{2})^{|\bsnu-\bsk|}}  \bigg)\\
&\leq \frac{C_q^\bsy\Czg}{(a_{\min}^\bsy)^{1/2}}  \max{(C^\bsy,1)}\, \frac{\bsb^\bsnu}{(\ln 2)^{|\bsnu|}} \bigg((|\bsnu|+2)! + (|\bsnu|+1)!\bigg)\\
&= \frac{C_q^\bsy\Czg}{(a_{\min}^\bsy)^{1/2}}  \max{(C^\bsy,1)}\, \frac{\bsb^\bsnu}{(\ln 2)^{|\bsnu|}} \frac{(|\bsnu|+3)!}{|\bsnu|+2}\,.
\end{align*}
Since $(\asy)^{-1/2} k_\bsnu = (\asy)^{-1/2} \nabla \cdot( \asy \nabla (\partial^\bsnu \hqy)) = (\asy)^{1/2} \Delta (\partial^\bsnu \hqy) + (\asy)^{-1/2}\nabla \asy \cdot \nabla(\partial^\bsnu \hqy)$, we have
\begin{align*}
\|(\asy)^{1/2}\Delta &(\partial^\bsnu \hqy)\|_{L^2(D)} \leq \|(\asy)^{-1/2} k_\bsnu \|_{L^2(D)} + \|(\asy)^{-1/2} (\nabla \asy\cdot \nabla (\partial^\bsnu \hqy))\|_{L^2(D)} \\
&\leq \frac{C_q^\bsy\Czg}{(a_{\min}^\bsy)^{1/2}}  \max{(C^\bsy,1)}\, \frac{\bsb^\bsnu}{(\ln 2)^{|\bsnu|}} \frac{(|\bsnu|+3)!}{|\bsnu|+2}
+ \frac{|\asy|_{C^1(\overline{D})}}{a_{\min}^\bsy} \|(\asy)^{1/2}\nabla(\partial^\bsnu \hqy)\|_{L^2(D)}\\
&\leq \frac{C_q^\bsy\Czg}{(a_{\min}^\bsy)^{1/2}}  \max{(C^\bsy,1)}\, \frac{\bsb^\bsnu}{(\ln 2)^{|\bsnu|}} \frac{(|\bsnu|+3)!}{|\bsnu|+2}
+ \frac{|a^\bsy|_{C^1(\overline{D})}}{a_{\min}^\bsy} (|\bsnu|+1)!\, \frac{\bsb^\bsnu}{(\ln{2})^{|\bsnu|}} \frac{C_q^\bsy\Czg}{(a_{\min}^\bsy)^{1/2}} \\
&\leq \max{\big(1,C^\bsy,\tfrac{|\asy|_{C^1(\overline{D})}}{a_{\min}^\bsy}\big)} \frac{\bsb^\bsnu}{(\ln 2)^{|\bsnu|}}  \frac{(|\bsnu|+4)!}{(|\bsnu|+2)(|\bsnu|+3)} \frac{C_q^\bsy\Czg}{(a_{\min}^\bsy)^{1/2}} \\
&= \widetilde{C}^\bsy\, \frac{\bsb^\bsnu}{(\ln 2)^{|\bsnu|}}  \frac{(|\bsnu|+4)!}{(|\bsnu|+2)(|\bsnu|+3)} \frac{C_q^\bsy\Czg}{(a_{\min}^\bsy)^{1/2}} \,,
\end{align*}
with $\widetilde{C}^\bsy = \max{(1,C^\bsy,\tfrac{C^\bsy}{2})} = \max{(1,C^\bsy)}$. The third inequality above follows from lemma \ref{lem:bound_deriv}.
\end{proof}
\pg{Note that $\widetilde{C}^\bsy = \max{\big(1,2\,\tfrac{|\asy|_{C^1(\overline{D})}}{a_{\min}^\bsy}\big)} \leq 1 + 2\tfrac{|\asy|_{C^1(\overline{D})}}{a_{\min}^\bsy} \in L^p(\Omega)$ because $a_{\min}^\bsy$ and $|\asy|_{C^1(\overline{D})}$ are both in $L^p(\Omega)$ for all $p \in [1,\infty)$.}

\begin{lemma} \label{lemma:diff}
\pg{Let $q_{s,h}^\bsy$ be the unique solution of \eqref{eq:weakFE}}. Then, under the assumptions of the previous lemma, it holds that
\begin{align*}
(a_{\min}^\bsy)^{1/2} \| \partial^\bsk (\hqy-q_{s,h}^\bsy)\|_{H_0^1(D)} &\leq \|(\asy)^{1/2} \nabla  \partial^\bsk (\hqy-q_{s,h}^\bsy)\|_{L^2(D)}\\ & \lesssim h\, \frac{\bsb^\bsk}{(\ln 2)^{|\bsk|}} \frac{(|\bsk|+2)! (|\bsk|+6)}{3} \frac{C_q^\bsy \widetilde{C}^\bsy (a_{\max}^\bsy)^{1/2}}{a_{\min}^\bsy} \Czg\,.
\end{align*}
\end{lemma}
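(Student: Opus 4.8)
The first inequality is immediate: since $\asy(x) \ge a_{\min}^\bsy$ pointwise on $D$ and $\|v\|_{H_0^1(D)} = \|\nabla v\|_{L^2(D)}$, we have $a_{\min}^\bsy \int_D |\nabla \partial^\bsk(\hqy - q_{s,h}^\bsy)|^2\,\mathrm dx \le \int_D \asy |\nabla \partial^\bsk(\hqy - q_{s,h}^\bsy)|^2\,\mathrm dx$, and taking square roots gives the claim. The substance is the second inequality, a parametric finite element error estimate, which I would prove by mimicking the recursion-and-resolve strategy of Lemmas \ref{lem:bound_deriv} and \ref{lemma:Laplace}. Setting $e^\bsy := \hqy - q_{s,h}^\bsy$, I would differentiate the continuous adjoint equation \eqref{eq:PDE_adjoint_sampled} and the finite element equation \eqref{eq:weakFE} $\bsk$ times by the Leibniz rule and subtract. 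Testing against $v_h \in V_h$ and isolating the $\bsm=\bsk$ term yields, for all $v_h \in V_h$, the Galerkin-type identity
\begin{equation*}
\int_D \asy \nabla \partial^\bsk e^\bsy \cdot \nabla v_h\,\mathrm dx = -\sum_{\bsm \le \bsk,\,\bsm \neq \bsk} \binom{\bsk}{\bsm} \int_D \partial^{\bsk-\bsm}\asy\, \nabla \partial^\bsm e^\bsy \cdot \nabla v_h\,\mathrm dx + \int_D \partial^\bsk(u_s^\bsy - u_{h,s}^\bsy)\, v_h\,\mathrm dx,
\end{equation*}
where the last term records that the finite element adjoint is driven by the finite element state $u_{h,s}^\bsy$.

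Next I would split $\partial^\bsk e^\bsy = (\partial^\bsk \hqy - I_h \partial^\bsk \hqy) + \chi$, with $I_h$ the nodal interpolant onto $V_h$ and $\chi := I_h \partial^\bsk \hqy - \partial^\bsk q_{s,h}^\bsy \in V_h$. Expanding $\|(\asy)^{1/2}\nabla\partial^\bsk e^\bsy\|_{L^2(D)}^2$ as the sum of $\int_D \asy \nabla \partial^\bsk e^\bsy \cdot \nabla(\partial^\bsk\hqy - I_h\partial^\bsk\hqy)\,\mathrm dx$ and $\int_D \asy \nabla \partial^\bsk e^\bsy \cdot \nabla\chi\,\mathrm dx$, I would bound the first summand by Cauchy--Schwarz together with the interpolation estimate $\|(\asy)^{1/2}\nabla(\partial^\bsk\hqy - I_h\partial^\bsk\hqy)\|_{L^2(D)} \le (a_{\max}^\bsy)^{1/2}\|\nabla(\partial^\bsk\hqy - I_h\partial^\bsk\hqy)\|_{L^2(D)} \lesssim (a_{\max}^\bsy)^{1/2}\, h\,\|\Delta\partial^\bsk\hqy\|_{L^2(D)}$, invoking elliptic $H^2$-regularity (valid on the convex domains considered, and legitimate since each $\partial^\bsm\hqy \in H_0^1(D)$) to pass from the $H^2$-seminorm to the $L^2$-norm of the Laplacian, into which I substitute Lemma \ref{lemma:Laplace}. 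For the second summand I would use the Galerkin identity above (permissible as $\chi \in V_h$), together with \eqref{eq:reg_a} to replace $\max_{x\in D}|\partial^{\bsk-\bsm}\asy/\asy|$ by $\bsb^{\bsk-\bsm}$. Writing $\mathbb{A}_\bsk := \|(\asy)^{1/2}\nabla\partial^\bsk e^\bsy\|_{L^2(D)}$, bounding $\chi$ through $\mathbb{A}_\bsk$ plus the interpolation error, and cancelling a common factor, this produces the familiar recursion $\mathbb{A}_\bsk \le \sum_{\bsm \le \bsk,\,\bsm\neq\bsk} \binom{\bsk}{\bsm}\bsb^{\bsk-\bsm}\mathbb{A}_\bsm + \mathbb{B}_\bsk$, whose source $\mathbb{B}_\bsk$ combines the interpolation contribution (supplying the factor $\tfrac{(|\bsk|+4)!}{(|\bsk|+2)(|\bsk|+3)}$, the prefactor $\tfrac{\widetilde{C}^\bsy C_q^\bsy}{a_{\min}^\bsy}\Czg$, and the extra $h(a_{\max}^\bsy)^{1/2}$ from Lemma \ref{lemma:Laplace}) with the state error term, for which the analogous estimate for the state equation, obtained by running the identical argument with deterministic right-hand side $z$, is used.

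I would then resolve the recursion by \cite[Lemma 5]{kuo2017multilevel}, yielding $\mathbb{A}_\bsk \le \sum_{\bs{j}\le\bsk}\binom{\bsk}{\bs{j}}\tfrac{|\bs{j}|!}{(\ln 2)^{|\bs{j}|}}\bsb^{\bs{j}}\,\mathbb{B}_{\bsk-\bs{j}}$, and collapse the resulting factorial convolution using the counting identities \eqref{eq:counting}--\eqref{eq:counting4}. Pulling out $\tfrac{\bsb^\bsk}{(\ln 2)^{|\bsk|}}$ and the prefactor $\tfrac{C_q^\bsy \widetilde{C}^\bsy (a_{\max}^\bsy)^{1/2}}{a_{\min}^\bsy}\Czg$ and writing $\tfrac{(n+4)!}{(n+2)(n+3)} = (n+4)(n+1)!$, the convolution reduces via \eqref{eq:counting} to $|\bsk|!\sum_{m=0}^{|\bsk|}(m+4)(m+1)$, and the elementary sum $\sum_{m=0}^{n}(m+4)(m+1) = (n+1)\tfrac{(n+2)(n+6)}{3}$ collapses this precisely to $\tfrac{(|\bsk|+2)!\,(|\bsk|+6)}{3}$, with the $h$ surviving linearly; this reproduces exactly the claimed constant.

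I expect the main obstacle to be the state coupling: the term $\partial^\bsk(u_s^\bsy - u_{h,s}^\bsy)$ must be shown to contribute at an order in $h$ no worse than the interpolation term (an Aubin--Nitsche-type $L^2$ estimate for the state renders it higher order, hence absorbable into $\lesssim$), without introducing additional powers of $a_{\max}^\bsy/a_{\min}^\bsy$ or inflating the factorial order beyond that of the interpolation contribution. The second delicate point is the exact arithmetic of the counting sums, which must reproduce the specific polynomial prefactor $(|\bsk|+6)/3$ rather than merely the correct factorial order; the elliptic regularity step, by contrast, is routine on the domains considered but must be explicitly invoked to license replacing $\|\partial^\bsk\hqy\|_{H^2(D)}$ by $\|\Delta\partial^\bsk\hqy\|_{L^2(D)}$.
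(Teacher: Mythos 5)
Your proposal is correct and its backbone is the paper's: the same recursion $\mathbb{A}_\bsk \leq \sum_{\bsm\leq\bsk,\,\bsm\neq\bsk}\binom{\bsk}{\bsm}\bsb^{\bsk-\bsm}\mathbb{A}_\bsm+\mathbb{B}_\bsk$ obtained by differentiating the Galerkin relation and isolating the $\bsm=\bsk$ term, resolved by \cite[Lemma 5]{kuo2017multilevel}, with Lemma \ref{lemma:Laplace} entering the source term through an $\mathcal{O}(h)$ energy-norm best-approximation estimate (C\'ea plus $H^2$-regularity on the convex domain), and the factorial convolution collapsed exactly as you compute --- your identity $\sum_{m=0}^{n}(m+4)(m+1)=(n+1)(n+2)(n+6)/3$ is precisely the arithmetic behind the paper's last line. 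The differences lie in the finite element bookkeeping. The paper splits the error with the parametric Ritz projection $P_h(\bsy)$ of \eqref{eq:proj_def}, so that $(Id-P_h)\partial^\bsk q_{s,h}^\bsy=0$ and the cross term vanishes by the very definition of $P_h$; your nodal-interpolant splitting $\partial^\bsk e^\bsy=(\partial^\bsk\hqy-I_h\partial^\bsk\hqy)+\chi$ reaches the same recursion but needs an extra triangle inequality to trade $\|\nabla\chi\|_{L^2(D)}$ for $\mathbb{A}_\bsk$ plus the interpolation error --- workable, just less clean. More substantively, you retain the consistency term $\int_D\partial^\bsk(u_s^\bsy-u_{h,s}^\bsy)\,v_h\,\mathrm dx$, which is genuinely present because \eqref{eq:weakFE} drives the discrete adjoint by the discrete state, whereas the paper's proof starts from $\int_D\asy\nabla(\hqy-q_{s,h}^\bsy)\cdot\nabla v_h\,\mathrm dx=0$ for all $v_h\in V_h$, i.e., it silently treats $q_{s,h}^\bsy$ as the Galerkin projection of $\hqy$ with exact state data. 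Your treatment is the more faithful one, but be aware that the crude bound $\|\cdot\|_{H^{-1}(D)}\leq c_1c_2\|\cdot\|_{H_0^1(D)}$ applied to the state error produces a source of the same order $h$ whose convolved factorial is one polynomial degree higher than $(|\bsk|+2)!(|\bsk|+6)/3$; so the Aubin--Nitsche step you mention is not optional but essential if the stated prefactor is to survive, and even then one should check (or let the downstream weights $\gamma_{\mathfrak{u}}$ absorb) the uniformity of the absorption in $|\bsk|$.
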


\begin{proof}
Let $P_h= P_h(\bsy): V \to  V_h : w \mapsto w_h$ denote the parametric FE projection onto $V_h$ which is defined, for arbitrary $w\in V$, by
\begin{align} \label{eq:proj_def}
\int_D \asy\, \nabla (P_h(\bsy)w - w) \cdot \nabla v_h \mathrm dx = 0,\,\quad  \forall v_h \in V_h. 
\end{align}
In particular, we have $P_h(\bsy) w = w_h$ in $V_h$ and $P_h^2(\bsy) = P_h(\bsy)$.
We conclude, using $\partial^\bsnu w_h \in V_h$ for every $\bsnu \in \mathbb{N}_0^s$, that $(Id-P_h(\bsy))(\partial^\bsnu w_h^\bsy) = 0$.
We stress here that, since the parametric FE projection $P_h(\bsy)$ depends on $\bsy$, in general
\begin{align*}
\partial^\bsnu (w^\bsy - w_h^\bsy) \neq (Id - P_h(\bsy))(\partial^\bsnu w^\bsy).
\end{align*}
Thus
\begin{align}\label{eq:expansion}
\|(\asy)^{1/2} \nabla \partial^{\bsk}&(\hqy-q_{s,h}^\bsy)\|_{L^2(D)} \notag\\
 &= \|(\asy)^{1/2} \nabla P_h(\bsy) \partial^{\bsk}(\hqy-q_{s,h}^\bsy) + (\asy)^{1/2} \nabla (Id-P_h(\bsy)) \partial^{\bsk}(\hqy-q_{s,h}^\bsy)\|_{L^2(D)}\notag\\
 &\leq \|(\asy)^{1/2} \nabla P_h(\bsy) \partial^{\bsk}(\hqy-q_{s,h}^\bsy)\|_{L^2(D)} + \|(\asy)^{1/2} \nabla (Id-P_h(\bsy)) \partial^{\bsk}\hqy\|_{L^2(D)}.
\end{align}
Now applying $\partial^{\bsk}$ to
\begin{align*}
\int_D \asy \nabla (\hqy-q_{s,h}^\bsy) \cdot \nabla v_h\, \mathrm dx = 0 \quad \forall \,v\in V_h\,, 
\end{align*} 
and separating out the $\bsm = \bsk$ term, we get for all $v_h \in V_h$
\begin{align*}
\int_D \asy \nabla \partial^\bsk(\hqy-q_{s,h}^\bsy) \cdot \nabla v_h \mathrm dx = - \sum_{\bsm \leq \bsk, \bsm \neq \bsk} \begin{pmatrix} \bsk \\ \bsm \end{pmatrix} \int_D (\partial^{\bsk - \bsm} \asy) \nabla \partial^\bsm (\hqy-q_{s,h}^\bsy) \cdot \nabla v_h\, \mathrm dx.
\end{align*} 
Choosing $v_h = P_h\partial^\bsk(\hqy-q_{s,h}^\bsy)$, the left-hand side becomes
\begin{equation*}
\int_D \asy |\nabla P_h \partial^\bsk(\hqy-q_{s,h}^\bsy)|^2\mathrm dx + \int_D \asy \nabla (Id-P_h)\partial^\bsk(\hqy-q_{s,h}^\bsy) \cdot \nabla P_h\partial^\bsk(\hqy-q_{s,h}^\bsy) \mathrm dx,
\end{equation*}
where the second term cancels due to the projection definition (\ref{eq:proj_def}).
Dividing and multiplying the right-hand side by $\asy$ and using the Cauchy--Schwarz inequality, one obtains
\begin{align*}
\int_D \asy |\nabla P_h \partial^\bsk(&\hqy-q_{s,h}^\bsy)|^2 \mathrm dx \leq \sum_{\bsm \leq \bsk, \bsm \neq \bsk} \begin{pmatrix} \bsk \\ \bsm \end{pmatrix} \max_{x\in D}\left| \frac{\partial^{\bsk-\bsm}\asy}{\asy}\right| \\ &\times \bigg( \int_D \asy |\nabla \partial^\bsm (\hqy-q_{s,h}^\bsy)|^2 \mathrm dx\bigg)^{1/2} \bigg( \int_D \asy |\nabla P_h \partial^\bsk (\hqy-q_{s,h}^\bsy)|^2 \mathrm dx\bigg)^{1/2}.
\end{align*}
Cancelling the common factor in both sides and using \eqref{eq:reg_a} we arrive at
\begin{align*}
\| (\asy)^{1/2} \nabla P_h \partial^\bsk (\hqy-q_{s,h}^\bsy)\|_{L^2(D)} \leq \sum_{\bsm \leq \bsk, \bsm \neq \bsk} \begin{pmatrix} \bsk \\ \bsm \end{pmatrix}  \bsb^{\bsk - \bsm} \|(\asy)^{1/2} \nabla \partial^\bsm (\hqy-q_{s,h}^\bsy)\|_{L^2(D)}.
\end{align*}
Substituting this into \eqref{eq:expansion} we obtain
\begin{align*}
\underbrace{\|(\asy)^{1/2} \nabla \partial^\bsk (\hqy-q_{s,h}^\bsy)\|_{L^2(D)}}_{\mathbb{A}_\bsk} &\leq \sum_{\bsm \leq \bsk, \bsm \neq \bsk} \begin{pmatrix} \bsk \\ \bsm \end{pmatrix}  \bsb^{\bsk - \bsm} \underbrace{\|(\asy)^{1/2} \nabla \partial^\bsm (\hqy-q_{s,h}^\bsy)\|_{L^2(D)}}_{\mathbb{A}_\bsm}\\ &\quad+ \underbrace{\|(\asy)^{1/2} \nabla (Id-P_h) \partial^\bsk \hqy\|_{L^2(D)}}_{\mathbb{B}_\bsk}
\end{align*}
leading by \cite[Lemma 5]{kuo2017multilevel} to 
\begin{align*}
&\|(\asy)^{1/2} \nabla \partial^\bsk (\hqy-q_{s,h}^\bsy)\|_{L^2(D)} \\
&\leq \sum_{\bsm \leq \bsk} \begin{pmatrix} \bsk \\ \bsm \end{pmatrix} \frac{|\bsm|!\,\bsb^\bsm }{(\ln{2})^{|\bsm|}} \|(\asy)^{1/2} \nabla (Id-P_h) \partial^{\bsk-\bsm} \hqy\|_{L^2(D)} \\
&\lesssim h\, (a_{\max}^\bsy)^{1/2}\, \sum_{\bsm \leq \bsk} \binom{\bsk}{\bsm}  \frac{|\bsm|!\,\bsb^\bsm }{(\ln{2})^{|\bsm|}} \|\Delta (\partial^{\bsk-\bsm} \hqy) \|_{L^2(D)}\\
&\leq h\, (a_{\max}^\bsy)^{1/2}\, \sum_{\bsm \leq \bsk} \binom{\bsk}{\bsm}  \frac{|\bsm|!\,\bsb^\bsm }{(\ln{2})^{|\bsm|}}  \widetilde{C}^\bsy\, \frac{\bsb^{\bsk-\bsm}}{(\ln 2)^{|\bsk-\bsm|}} \frac{(|\bsk-\bsm|+4)!}{(|\bsk-\bsm|+2)(|\bsk-\bsm|+3)} \frac{C_q^\bsy}{a_{\min}^\bsy}C_{zg}\\
&= h\, \frac{\bsb^\bsk}{(\ln 2)^{|\bsk|}} \sum_{\bsm \leq \bsk} \binom{\bsk}{\bsm} |\bsm|! \frac{(|\bsk-\bsm|+4)!}{(|\bsk-\bsm|+2)(|\bsk-\bsm|+3)} \frac{\widetilde{C}^\bsy C_q^\bsy (a_{\max}^\bsy)^{1/2}}{a_{\min}^\bsy}C_{zg}\\
&= h\,\frac{\bsb^\bsk}{(\ln 2)^{|\bsk|}} \frac{(|\bsk|+2)! (|\bsk|+6)}{3} \frac{\widetilde{C}^\bsy  C_q^\bsy (a_{\max}^\bsy)^{1/2}}{a_{\min}^\bsy}C_{zg}\,.
\end{align*}
In order to justify the second inequality, note that by the product rule $\hqy$ satisfies the following PDE
\begin{align*}
-\Delta \hqy = \frac{1}{\asy} (u_s^\bsy - g + \nabla \asy \cdot \nabla \hqy)\,,
\end{align*}
allowing us to derive $H^2(D)$-regularity
\begin{align*}
\|\hqy \|_{H^2(D)} := \|\Delta \hqy\|_{L^2(D)} &\leq \frac{1}{a_{\min}^\bsy} \bigg(1 + \frac{|\asy|_{C^1(\overline{D})}}{a_{\min}^\bsy}\bigg) \|u_s^\bsy -g\|_{L^2(D)}\\
&\leq \frac{1}{a_{\min}^\bsy} \bigg(1 + \frac{|\asy|_{C^1(\overline{D})}}{a_{\min}^\bsy}\bigg) C_q^\bsy \big(\|z\|_{L^2(D)} + \|g\|_{L^2(D)}\big)\,.
\end{align*}
Classical results from FE theory for $H^2(D)$-regular functions on a convex domain $D$ (see, e.g., \cite{GilbargTrudinger}) lead, as $h\to 0$, to 
\begin{align*}
\inf_{v \in V_{h_\ell}} \|\hqy-v\|_V \lesssim h_\ell \|\Delta \hqy\|_{L^2(D)}\,.
\end{align*}
This result together with C\'ea's lemma and the definition of $a_{\max}^\bsy$ then proves
\begin{equation*}
\|(\asy)^{1/2} \nabla (\hqy - q_{s,h_\ell}^\bsy)\|_{L^2(D)} \lesssim h_\ell (a_{\max}^\bsy)^{1/2} \|\Delta q_s^\bsy\|_{L^2(D)}\,.
\qedhere
\end{equation*}
\end{proof}
\pg{Note that one can apply a standard Aubin--Nitsche duality argument to obtain quadratic convergence in the meshwidth $h$ measured in the $L^2(D)$-norm.}

Let $\uysl$ be the solution of 
\begin{align}\label{eq:usl}
\int_D \aysl \nabla \uysl \cdot \nabla v \, \mathrm dx &= \int_D z v \,\mathrm dx,\, \quad \forall v\in H_0^1(D)
\end{align}
and $\uysll$ be the solution of 
\begin{align}\label{eq:usll}
\int_D \aysll \nabla \uysll \cdot \nabla v \, \mathrm dx &= \int_D z v \,\mathrm dx,\, \quad \forall v\in H_0^1(D)\,.
\end{align}
Subtracting \eqref{eq:usll} from \eqref{eq:usl} we get
\begin{align}\label{eq:usl-usll}
0 = \int_D \aysl (\nabla \uysl - \nabla \uysll) \cdot \nabla v\, \mathrm dx + \int_D (\aysl - \aysll) \nabla \uysll \cdot \nabla v\, \mathrm dx\,.
\end{align}
This is used in \cite{CohenDeVoreSchwab2010} to show, that
\begin{align*}
\|\uysl-\uysll\|_{H_0^1(D)} \leq \max_{x\in D}|\aysl -\aysll| \frac{\|z\|_{H^{-1}(D)}}{(a_{\min}^\bsy)^2} .
\end{align*}
We are next going to show an analogous result for the $\bsnu$-th partial derivatives with respect to the uncertain variable. 

\begin{lemma}\label{lem:lemma_diffu}
Let $\uysl$ be the unique solution of \eqref{eq:usl} and $\uysll$ the unique solution of \eqref{eq:usll}. Then, under the assumptions of the previous lemma, it holds that
\begin{align*}
\|\partial^{\bsnu}(u^\bsy_{s_{\ell}}-u^\bsy_{s_{\ell-1}})\|_{H_0^1(D)} \leq h_{\ell-1} 2 \Ca \frac{\bsb^\bsnu}{(\ln{2})^{|\bsnu|}} (|\bsnu| + 1)!  \frac{a_{\max}^\bsy}{(a_{\min}^\bsy)^{3/2}} \|z\|_{H^{-1}(D)}\,.
\end{align*}
\end{lemma}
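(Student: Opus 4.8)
The plan is to mirror the recursion-and-resummation strategy already used for Lemmas~\ref{lem:bound_deriv}--\ref{lemma:diff}, now applied to the difference $w^\bsy := \uysl - \uysll$. I would start from the subtracted weak form \eqref{eq:usl-usll}, i.e.\ $\int_D \aysl \nabla w^\bsy \cdot \nabla v\,\mathrm dx = -\int_D (\aysl-\aysll)\nabla\uysll\cdot\nabla v\,\mathrm dx$ for all $v\in H_0^1(D)$, apply $\partial^\bsnu$ and use the Leibniz rule on both sides. Separating out the $\bsm=\bsnu$ term on the left, testing with $v=\partial^\bsnu w^\bsy$, weighting by $\aysl$, and dividing out the common factor $\|(\aysl)^{1/2}\nabla\partial^\bsnu w^\bsy\|_{L^2(D)}$ produces a recursion of exactly the form solved by \cite[Lemma 5]{kuo2017multilevel},
\[
\mathbb{A}_\bsnu \le \sum_{\bsm\le\bsnu,\,\bsm\neq\bsnu}\binom{\bsnu}{\bsm}\bsb^{\bsnu-\bsm}\mathbb{A}_\bsm + \mathbb{B}_\bsnu, \qquad \mathbb{A}_\bsnu := \|(\aysl)^{1/2}\nabla\partial^\bsnu w^\bsy\|_{L^2(D)},
\]
where the homogeneous coefficient $\bsb^{\bsnu-\bsm}$ comes from \eqref{eq:reg_a} applied to $\partial^{\bsnu-\bsm}\aysl/\aysl$, and $\mathbb{B}_\bsnu$ collects the inhomogeneous contribution stemming from the $(\aysl-\aysll)$ term.

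The core work is to put $\mathbb{B}_\bsnu$ into closed form. Differentiating the right-hand side of \eqref{eq:usl-usll} and applying Cauchy--Schwarz, each summand carries a factor $\partial^{\bsnu-\bsm}(\aysl-\aysll)$, bounded pointwise by $\Ca h_{\ell-1}a_{\max}^\bsy\bsb^{\bsnu-\bsm}$ via \eqref{eq:deriv_interpolation_error}, and a factor $\|\partial^\bsm\uysll\|_{H_0^1(D)}$, bounded by the state estimate \eqref{eq:regforward} as $|\bsm|!\,\bsb^\bsm(\ln 2)^{-|\bsm|}\|z\|_{H^{-1}(D)}/a_{\min}^\bsy$; extracting the weight from $\nabla\partial^\bsnu w^\bsy$ contributes one further factor $(a_{\min}^\bsy)^{-1/2}$. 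Summing over $\bsm\le\bsnu$ and using \eqref{eq:counting2} to collapse $\sum_{\bsm}\binom{\bsnu}{\bsm}|\bsm|!(\ln2)^{-|\bsm|}$ to $2\,|\bsnu|!(\ln2)^{-|\bsnu|}$ yields
\[
\mathbb{B}_\bsnu \le 2\Ca h_{\ell-1}\,\frac{a_{\max}^\bsy}{(a_{\min}^\bsy)^{3/2}}\,\frac{\bsb^\bsnu\,|\bsnu|!}{(\ln 2)^{|\bsnu|}}\,\|z\|_{H^{-1}(D)}.
\]

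Finally I would feed this into \cite[Lemma 5]{kuo2017multilevel}, giving $\mathbb{A}_\bsnu\le\sum_{\bsk\le\bsnu}\binom{\bsnu}{\bsk}|\bsk|!(\ln2)^{-|\bsk|}\bsb^\bsk\,\mathbb{B}_{\bsnu-\bsk}$; substituting $\mathbb{B}_{\bsnu-\bsk}$ and invoking the counting identity $\sum_{\bsk\le\bsnu}\binom{\bsnu}{\bsk}|\bsk|!\,|\bsnu-\bsk|! = (|\bsnu|+1)!$ (the display following \eqref{eq:counting}, also \cite[equation 9.4]{kuo2016application}) collapses the convolution and produces the factor $(|\bsnu|+1)!$ with the stated constant $(a_{\min}^\bsy)^{-3/2}$. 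As in Lemma~\ref{lemma:diff}, the coercivity relation $(a_{\min}^\bsy)^{1/2}\|\partial^\bsnu w^\bsy\|_{H_0^1(D)}\le\mathbb{A}_\bsnu$ then relates the weighted energy norm to $\|\partial^\bsnu(\uysl-\uysll)\|_{H_0^1(D)}$, delivering the asserted bound. The main obstacle is the bookkeeping of the interacting powers of $a_{\min}^\bsy$ and $a_{\max}^\bsy$ arising from three distinct sources (the interpolation-error estimate \eqref{eq:deriv_interpolation_error}, the state regularity \eqref{eq:regforward}, and the coercivity weight) while keeping the combinatorial factors in a form that the two counting identities can close; the structurally new point relative to Lemmas~\ref{lem:bound_deriv}--\ref{lemma:diff} is that the forcing $\mathbb{B}_\bsnu$ is itself a convolution of the level-difference bound with the entire $\bsy$-derivative hierarchy of $\uysll$, rather than a single forcing term.
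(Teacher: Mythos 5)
Your proof follows the paper's own argument essentially step for step: the same subtracted weak form \eqref{eq:usl-usll}, the same separation of the $\bsm=\bsnu$ term with the test function $v=\partial^\bsnu w^\bsy$, the same recursion closed by \cite[Lemma 5]{kuo2017multilevel}, and the same three ingredients \eqref{eq:deriv_interpolation_error}, \eqref{eq:regforward} and \eqref{eq:counting2} feeding into $\mathbb{B}_\bsnu$, finished with the identity $\sum_{\bsk\le\bsnu}\binom{\bsnu}{\bsk}|\bsk|!\,|\bsnu-\bsk|!=(|\bsnu|+1)!$. So the route is the intended one.

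There is, however, one concrete mismatch in the bookkeeping of $a_{\min}^\bsy$. You charge the factor $(a_{\min}^\bsy)^{-1/2}$ from ``extracting the weight from $\nabla\partial^\bsnu w^\bsy$'' into $\mathbb{B}_\bsnu$, arriving at $\mathbb{B}_\bsnu\lesssim h_{\ell-1}\,a_{\max}^\bsy\,(a_{\min}^\bsy)^{-3/2}\cdots$, and then you \emph{additionally} apply the coercivity relation $(a_{\min}^\bsy)^{1/2}\|\partial^\bsnu w^\bsy\|_{H_0^1(D)}\le\mathbb{A}_\bsnu$ at the very end. Followed literally, this chain yields the exponent $(a_{\min}^\bsy)^{-2}$ in the final estimate, not the $(a_{\min}^\bsy)^{-3/2}$ asserted in the lemma: the two occurrences of $(a_{\min}^\bsy)^{-1/2}$ are genuinely distinct (one converts the test-function gradient inside the Cauchy--Schwarz step so that the common factor $\mathbb{A}_\bsnu$ can be cancelled, the other converts $\mathbb{A}_\bsnu$ back to the unweighted $H_0^1$ norm afterwards), so you cannot count both and still land on the stated power. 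The paper's own proof takes $\mathbb{B}_\bsnu=\sum_{\bsm\le\bsnu}\binom{\bsnu}{\bsm}\max_{x\in D}|\partial^{\bsnu-\bsm}(\aysl-\aysll)|\,\|\nabla(\partial^\bsm\uysll)\|_{L^2(D)}$ \emph{without} the extra $(a_{\min}^\bsy)^{-1/2}$ and only pays the final coercivity factor, which is how it reaches $(a_{\min}^\bsy)^{-3/2}$. Since every negative power of $a_{\min}^\bsy$ lies in $L^p(\Omega)$ for all $p<\infty$, the half power is harmless for the way the lemma is used downstream, but as a proof of the inequality exactly as stated your tally does not close: you should either justify omitting the $(a_{\min}^\bsy)^{-1/2}$ from $\mathbb{B}_\bsnu$ or accept the weaker exponent $2$.
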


\begin{proof}
Taking the $\bsnu$-th partial derivative on both sides of \eqref{eq:usl-usll}, we get with Leibniz product rule
\begin{align*}
\sum_{\bsm \leq \bsnu} \binom{\bsnu}{\bsm} \int_D \partial^{\bsnu-\bsm} \aysl \nabla (\partial^\bsm (\uysl-\uysll)) \cdot \nabla v\, \mathrm dx \\
= - \sum_{\bsm \leq \bsnu} \binom{\bsnu}{\bsm} \int_D \partial^{\bsnu-\bsm} (\aysl - \aysll) \nabla (\partial^\bsm \uysll) \cdot \nabla v\, \mathrm dx\,.
\end{align*} 
Introducing the notation $w^\bsy := \uysl - \uysll$, separating out the $\bsm = \bsnu$ term on the left-hand side and setting $v = \partial^\bsnu w^\bsy$ gives
\begin{align*}
\int_D \aysl &|\nabla(\partial^\bsnu w^\bsy)|^2 \mathrm dx\\
= &-\sum_{\bsm \leq \bsnu, \bsm \neq \bsnu} \binom{\bsnu}{\bsm} \int_D \partial^{\bsnu-\bsm} \aysl \nabla (\partial^\bsm w^\bsy) \cdot \nabla (\partial^\bsnu w^\bsy)\, \mathrm dx \\
&- \sum_{\bsm \leq \bsnu} \binom{\bsnu}{\bsm} \int_D \partial^{\bsnu-\bsm} (\aysl - \aysll) \nabla (\partial^\bsm \uysll) \cdot \nabla (\partial^\bsnu w^\bsy)\, \mathrm dx\\
= &-\sum_{\bsm \leq \bsnu, \bsm \neq \bsnu} \binom{\bsnu}{\bsm} \int_D \frac{\partial^{\bsnu-\bsm} \aysl}{\aysl} \aysl \nabla (\partial^\bsm w^\bsy) \cdot \nabla (\partial^\bsnu w^\bsy)\, \mathrm dx \\
&- \sum_{\bsm \leq \bsnu} \binom{\bsnu}{\bsm} \int_D \partial^{\bsnu-\bsm} (\aysl - \aysll) \nabla (\partial^\bsm \uysll) \cdot \nabla (\partial^\bsnu w^\bsy)\, \mathrm dx\\
\leq & \sum_{\bsm \leq \bsnu, \bsm \neq \bsnu} \binom{\bsnu}{\bsm} \bsb^{\bsnu-\bsm} \|(\aysl)^{1/2} \nabla(\partial^\bsm w^\bsy)\|_{L^2(D)} \|(\aysl)^{1/2} \nabla(\partial^\bsnu w^\bsy)\|_{L^2(D)}\\
&+ \sum_{\bsm \leq \bsnu} \binom{\bsnu}{\bsm} \max_{x \in D}|\partial^{\bsnu-\bsm}(\aysl-\aysll)|\, \|\nabla (\partial^\bsm \uysll)\|_{L^2(D)} \|(\aysl)^{1/2} \nabla(\partial^\bsnu w^\bsy)\|_{L^2(D)}\,.
\end{align*} 
Cancelling one common factor on both sides we obtain
\begin{align*}
\underbrace{\|(\aysl)^{1/2} \nabla(\partial^\bsnu w^\bsy)\|_{L^2(D)}}_{\mathbb{A}_\bsnu}
\leq & \sum_{\bsm \leq \bsnu, \bsm \neq \bsnu} \binom{\bsnu}{\bsm} \bsb^{\bsnu-\bsm} \underbrace{\|(\aysl)^{1/2} \nabla(\partial^\bsm w^\bsy)\|_{L^2(D)}}_{\mathbb{A}_\bsm}\\
&+ \underbrace{\sum_{\bsm \leq \bsnu} \binom{\bsnu}{\bsm} \max_{x \in D}|\partial^{\bsnu-\bsm}(\aysl-\aysll)|\, \|\nabla (\partial^\bsm \uysll)\|_{L^2(D)}}_{\mathbb{B}_\bsnu}\,.
\end{align*} 
We know that
\begin{align*}
\|\nabla (\partial^\bsm \uysll)\|_{L^2(D)} = \|\partial^\bsm \uysll\|_{H_0^1(D)} \leq |\bsm|! \frac{\bsb^\bsm}{(\ln{2})^{|\bsm|}} \frac{\|z\|_{H^{-1}(D)}}{a_{\min}^\bsy} \,,
\end{align*}
and using Lemma \ref{lem:deriv_interpolation_error} we get 
\begin{align*}
\mathbb{B}_\bsnu 
&\leq \sum_{\bsm\leq\bsnu} \binom{\bsnu}{\bsm} \Ca h_{\ell-1} a_{\max}^\bsy \bsb^{\bsnu-\bsm} |\bsm|! \frac{\bsb^\bsm}{(\ln{2})^{|\bsm|}} \frac{\|z\|_{H^{-1}(D)}}{a_{\min}^\bsy}\\
&\leq \bsb^\bsnu   \Ca h_{\ell-1} a_{\max}^\bsy \frac{\|z\|_{H^{-1}(D)}}{a_{\min}^\bsy} 
\sum_{\bsm\leq\bsnu} \binom{\bsnu}{\bsm}  \frac{|\bsm|!}{(\ln{2})^{|\bsm|}} \\
&\leq \frac{|\bsnu|!}{(\ln{2})^{|\bsnu|}}\bsb^\bsnu  2 \Ca h_{\ell-1} a_{\max}^\bsy \frac{\|z\|_{H^{-1}(D)}}{a_{\min}^\bsy} \,,
\end{align*}
where we used \eqref{eq:counting2}.
We can now apply \cite[Lemma 5]{kuo2017multilevel} to get
\begin{align*}
\|(\aysl)^{1/2} \nabla(\partial^\bsnu w^\bsy)&\|_{L^2(D)}\\ 
&\leq \sum_{\bsm \leq \bsnu} \binom{\bsnu}{\bsm} \frac{|\bsm|!}{(\ln{2})^{|\bsm|}} \bsb^\bsm \frac{|\bsnu-\bsm|!}{(\ln{2})^{|\bsnu-\bsm|}}\bsb^{\bsnu-\bsm} 2 \Ca h_{\ell-1} a_{\max}^\bsy \frac{\|z\|_{H^{-1}(D)}}{a_{\min}^\bsy} \\
&= 2\Ca h_{\ell-1} a_{\max}^\bsy \frac{\|z\|_{H^{-1}(D)}}{a_{\min}^\bsy} \frac{\bsb^\bsnu}{(\ln{2})^{|\bsnu|}} \sum_{\bsm \leq \bsnu} \binom{\bsnu}{\bsm} |\bsm|! |\bsnu-\bsm|!\\
&= 2\Ca h_{\ell-1} a_{\max}^\bsy \frac{\|z\|_{H^{-1}(D)}}{a_{\min}^\bsy} \frac{\bsb^\bsnu}{(\ln{2})^{|\bsnu|}} (|\bsnu| + 1)!\,.\qedhere
\end{align*}
\end{proof}

A similar result holds for the adjoint variable. Therefore let $\qysl$ be the solution of 
\begin{align}\label{eq:qsl}
\int_D \aysl \nabla \qysl \cdot \nabla v \, \mathrm dx &= \int_D (\uysl -g) v \,\mathrm dx,\, \quad \forall v\in H_0^1(D)
\end{align}
and $\qysll$ be the solution of 
\begin{align}\label{eq:qsll}
\int_D \aysll \nabla \qysll \cdot \nabla v \, \mathrm dx &= \int_D (\uysll -g) v \,\mathrm dx,\, \quad \forall v\in H_0^1(D)\,.
\end{align}
Subtracting \eqref{eq:qsll} from \eqref{eq:qsl} we get
\begin{align}\label{eq:qsl-qsll}
0 = \int_D \aysl (\nabla \qysl - \nabla \qysll) \cdot \nabla v\, \mathrm dx + \int_D (\aysl - \aysll) \nabla \qysll \cdot \nabla v\, \mathrm dx - \int_D (\uysl - \uysll) v\, \mathrm dx\,.
\end{align}

\begin{lemma}\label{lemma:diffell}
Let $\qysl$ be the unique solution of \eqref{eq:qsl} and $\qysll$ the unique solution of \eqref{eq:qsll}. Then, under the assumptions of the previous lemma, it holds that
\begin{align*}
\|\partial^{\bsnu}(q^\bsy_{s_{\ell}}-q^\bsy_{s_{\ell-1}})\|_{H_0^1(D)} \leq h_{\ell-1}  (|\bsnu| + 2)! \frac{\bsb^\bsnu}{(\ln{2})^{|\bsnu|}} 2\Ca \frac{a_{\max}^\bsy C_q^\bsy}{(a_{\min}^\bsy)^{3/2}}  C_{zg}\,.
\end{align*}
\end{lemma}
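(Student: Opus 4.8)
The plan is to mirror the argument for the state variable in Lemma \ref{lem:lemma_diffu}, carrying along the additional source term that distinguishes the adjoint equation. Writing $w^\bsy := \qysl - \qysll$, I would apply $\partial^\bsnu$ to \eqref{eq:qsl-qsll} via the Leibniz product rule, separate out the $\bsm = \bsnu$ term of the elliptic part, and then test with $v = \partial^\bsnu w^\bsy$. This produces on the left the squared quantity $\mathbb{A}_\bsnu := \|(\aysl)^{1/2}\nabla(\partial^\bsnu w^\bsy)\|_{L^2(D)}$, and on the right three groups of terms: the recursive elliptic terms $\sum_{\bsm\leq\bsnu,\bsm\neq\bsnu}\binom{\bsnu}{\bsm}\bsb^{\bsnu-\bsm}\mathbb{A}_\bsm$ (treated exactly as before through \eqref{eq:reg_a}), the interpolation-difference terms carrying $\partial^{\bsnu-\bsm}(\aysl-\aysll)$, and the genuinely new contribution $\int_D\partial^\bsnu(\uysl-\uysll)\,\partial^\bsnu w^\bsy\,\mathrm dx$ stemming from the differing right-hand sides of \eqref{eq:qsl} and \eqref{eq:qsll}. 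After cancelling a common factor of $\mathbb{A}_\bsnu$, this yields a recursion $\mathbb{A}_\bsnu \leq \sum_{\bsm\leq\bsnu,\bsm\neq\bsnu}\binom{\bsnu}{\bsm}\bsb^{\bsnu-\bsm}\mathbb{A}_\bsm + \mathbb{B}_\bsnu$ of precisely the shape required by \cite[Lemma 5]{kuo2017multilevel}.

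The core of the work is assembling the inhomogeneous term $\mathbb{B}_\bsnu$ from the last two groups. For the interpolation-difference terms I would use \eqref{eq:deriv_interpolation_error}, namely $\max_{x\in D}|\partial^{\bsnu-\bsm}(\aysl-\aysll)| \leq \Ca h_{\ell-1}a_{\max}^\bsy\bsb^{\bsnu-\bsm}$, together with the adjoint bound of Lemma \ref{lem:bound_deriv} for $\|\partial^\bsm\qysll\|_{H_0^1(D)}$, whose factorial weight is $(|\bsm|+1)!$; summing against $\binom{\bsnu}{\bsm}$ and invoking the counting identity \eqref{eq:counting4} collapses this to a factor $2(|\bsnu|+1)!/(\ln 2)^{|\bsnu|}$. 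For the new term I would apply Cauchy--Schwarz in the form $\big|\int_D\partial^\bsnu(\uysl-\uysll)\,\partial^\bsnu w^\bsy\,\mathrm dx\big| \leq \|\partial^\bsnu(\uysl-\uysll)\|_{H^{-1}(D)}\|\partial^\bsnu w^\bsy\|_{H_0^1(D)}$, bound the first factor through the embeddings \eqref{eq:embconst1}--\eqref{eq:embconst2} and the state-difference estimate of Lemma \ref{lem:lemma_diffu} (which again carries $(|\bsnu|+1)!$), and convert the $H_0^1(D)$ factor into $\mathbb{A}_\bsnu$ via \eqref{eq:weightednorm}. Both contributions thus share the common shape $h_{\ell-1}\,\bsb^\bsnu(|\bsnu|+1)!/(\ln 2)^{|\bsnu|}$, so they merge into a single $\mathbb{B}_\bsnu$ with prefactor proportional to $\Ca\,a_{\max}^\bsy C_q^\bsy C_{zg}/a_{\min}^\bsy$ after absorbing $\|z\|_{H^{-1}(D)}$ and the embedding constants into $C_q^\bsy$ and $\Czg$.

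Finally, I would feed $\mathbb{B}_\bsnu$ into \cite[Lemma 5]{kuo2017multilevel} to obtain $\mathbb{A}_\bsnu \leq \sum_{\bsk\leq\bsnu}\binom{\bsnu}{\bsk}\frac{|\bsk|!}{(\ln 2)^{|\bsk|}}\bsb^\bsk\mathbb{B}_{\bsnu-\bsk}$, and evaluate the combinatorial sum $\sum_{\bsk\leq\bsnu}\binom{\bsnu}{\bsk}|\bsk|!(|\bsnu-\bsk|+1)!$. Grouping terms by $|\bsk|=i$ through \eqref{eq:counting} turns this into $|\bsnu|!\sum_{i=0}^{|\bsnu|}(|\bsnu|-i+1) = (|\bsnu|+2)!/2$, which is exactly what promotes the factorial from $(|\bsnu|+1)!$ to $(|\bsnu|+2)!$ while absorbing the stray factor of $2$. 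A last application of \eqref{eq:weightednorm}, in the form $(a_{\min}^\bsy)^{1/2}\|\partial^\bsnu w^\bsy\|_{H_0^1(D)} \leq \mathbb{A}_\bsnu$, raises the denominator to $(a_{\min}^\bsy)^{3/2}$ and delivers the stated estimate. I expect the main obstacle to be one of bookkeeping rather than of ideas: tracking the powers of $a_{\min}^\bsy$ and $a_{\max}^\bsy$ and the constants $C_q^\bsy, \Czg$ through the two distinct source contributions so that they align into a single clean prefactor, and confirming that the coupling term $\uysl-\uysll$ contributes at the same factorial order and therefore does not push the final bound past $(|\bsnu|+2)!$.
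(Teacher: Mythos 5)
Your proposal is correct and follows essentially the same route as the paper's proof: the same Leibniz expansion of \eqref{eq:qsl-qsll} tested with $\partial^\bsnu w^\bsy$, the same assembly of $\mathbb{B}_\bsnu$ from Lemma \ref{lem:deriv_interpolation_error}, Lemma \ref{lem:bound_deriv}, Lemma \ref{lem:lemma_diffu} and \eqref{eq:counting4}, and the same closing application of \cite[Lemma 5]{kuo2017multilevel} with the identity $\sum_{\bsm\leq\bsnu}\binom{\bsnu}{\bsm}|\bsm|!(|\bsnu-\bsm|+1)! = (|\bsnu|+2)!/2$ (which you derive directly where the paper cites \cite[equation 9.5]{kuo2016application}) followed by \eqref{eq:weightednorm}. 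The constant bookkeeping you flag as the main risk works out exactly as you anticipate, yielding the stated $2\Ca\,a_{\max}^\bsy C_q^\bsy (a_{\min}^\bsy)^{-3/2} C_{zg}$ prefactor.
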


\begin{proof}
Taking the $\bsnu$-th partial derivative on both sides of \eqref{eq:qsl-qsll}, we get by Leibniz product rule
\begin{align*}
\sum_{\bsm \leq \bsnu} \binom{\bsnu}{\bsm} \int_D \partial^{\bsnu-\bsm} \aysl \nabla (\partial^\bsm (\qysl-\qysll)) \cdot \nabla v\, \mathrm dx \\
= - \sum_{\bsm \leq \bsnu} \binom{\bsnu}{\bsm} \int_D \partial^{\bsnu-\bsm} (\aysl - \aysll) \nabla (\partial^\bsm \qysll) \cdot \nabla v\, \mathrm dx + \int_D \partial^\bsnu (\uysl-\uysll) v\,\mathrm dx\,.
\end{align*} 
Introducing the notation $w^\bsy := \qysl - \qysll$, separating out the $\bsm = \bsnu$ term on the left-hand side, setting $v = \partial^\bsnu w^\bsy$ and cancelling the common factor $\|(\aysl)^{-1/2} \partial^\bsnu w^\bsy\|_{H_0^1(D)}$, gives
\begin{align*}
\underbrace{\|(\aysl)^{1/2} \nabla(\partial^\bsnu w^\bsy)\|_{L^2(D)}}_{\mathbb{A}_\bsnu}
\leq \sum_{\bsm \leq \bsnu, \bsm \neq \bsnu} \binom{\bsnu}{\bsm} \bsb^{\bsnu-\bsm} \underbrace{\|(\aysl)^{1/2} \nabla(\partial^\bsm w^\bsy)\|_{L^2(D)}}_{\mathbb{A}_\bsm}\\
+ \underbrace{\sum_{\bsm \leq \bsnu} \binom{\bsnu}{\bsm} \max_{x \in D}|\partial^{\bsnu-\bsm}(\aysl-\aysll)|\, \|\nabla (\partial^\bsm \qysll)\|_{L^2(D)} +  \frac{\|\partial^\bsnu(\uysl-\uysll)\|_{H^{-1}(D)}}{(a_{\min}^\bsy)^{1/2}}}_{\mathbb{B}_\bsnu}\,,
\end{align*} 
where we used $\int_D \partial^\bsnu(\uysl-\uysll) \partial^\bsnu w^\bsy\, \mathrm dx = \int_D (\aysl)^{1/2} \partial^\bsnu(\uysl-\uysll) (\aysl)^{-1/2} \partial^\bsnu w^\bsy\, \mathrm dx \leq \|(\aysl)^{1/2} \partial^\bsnu(\uysl-\uysll)\|_{H^{-1}(D)} \|(\aysl)^{-1/2} \partial^\bsnu w^\bsy\|_{H_0^1(D)}$ in order to cancel the common factors.

We know from Lemma \ref{lem:bound_deriv} that
\begin{align*}
\|\nabla (\partial^\bsm \qysll)\|_{L^2(D)} = \|\partial^\bsm \qysll\|_{H_0^1(D)} \leq (|\bsm|+1)! \frac{\bsb^\bsm}{(\ln{2})^{|\bsm|}} \frac{C_q^\bsy}{a_{\min}^\bsy} \Czg\,.
\end{align*}
This bound holds because $\qysll$ is the adjoint state corresponding to the stochastic field $a_{s_{\ell-1}}^\bsy$, which in turn is obtained by interpolating the field $a_s^\bsy$ in the nodes of a coarser CE method; see \S\ref{sec:stoch_field}. Note that in both cases $\bsy \in \mathbb{R}^s$. Importantly, the stochastic field $a_{s_{\ell-1}}^\bsy$ thus originates from the CE method of dimension $s$. Since the $\bsb$ are characterized by the CE method, the $\bsb$ in the bound of Lemma \ref{lem:bound_deriv} is the same for $\qysl$ and $\qysll$.
Furthermore, from Lemma \ref{lem:lemma_diffu} we know that
\begin{align*}
\|\partial^\bsnu(\uysl-\uysll)\|_{H^{-1}(D)} &\leq c_1c_2\|\partial^\bsnu(\uysl-\uysll)\|_{H_0^1(D)} \\
&\leq c_1c_2 h_{\ell-1} 2 \Ca \frac{a_{\max}^\bsy}{(a_{\min}^\bsy)^{3/2}} \frac{\bsb^\bsnu}{(\ln{2})^{|\bsnu|}} (|\bsnu| + 1)! \|z\|_{H^{-1}(D)}\,.
\end{align*}
This and Lemma \ref{lem:deriv_interpolation_error} gives
\begin{align*}
\mathbb{B}_\bsnu 
&\leq \sum_{\bsm\leq\bsnu} \binom{\bsnu}{\bsm}  \Ca h_{\ell-1} a_{\max}^\bsy \bsb^{\bsnu-\bsm} (|\bsm|+1)! \frac{\bsb^\bsm}{(\ln{2})^{|\bsm|}} \frac{C_q^\bsy}{a_{\min}^\bsy} \Czg\\
&\quad+ \frac{1}{(a_{\min}^\bsy)^{1/2}} c_1c_2 h_{\ell-1} 2 \Ca a_{\max}^\bsy \frac{\|z\|_{H^{-1}(D)}}{(a_{\min}^\bsy)^{3/2}} \frac{\bsb^\bsnu}{(\ln{2})^{|\bsnu|}} (|\bsnu| + 1)!\\
&\leq \bsb^\bsnu \Ca h_{\ell-1} a_{\max}^\bsy \frac{C_q^\bsy}{a_{\min}^\bsy} (\|z\|_{H^{-1}(D)} + \|g\|_{H^{-1}(D)}) \sum_{\bsm\leq\bsnu} \binom{\bsnu}{\bsm} \frac{(|\bsm|+1)!}{(\ln{2})^{|\bsm|}} \\
&\quad+ \frac{1}{(a_{\min}^\bsy)^{1/2}} c_1c_2 h_{\ell-1} 2 \Ca a_{\max}^\bsy \frac{\|z\|_{H^{-1}(D)}}{(a_{\min}^\bsy)^{3/2}} \frac{\bsb^\bsnu}{(\ln{2})^{|\bsnu|}} (|\bsnu| + 1)!\\
&\leq \bsb^\bsnu \Ca h_{\ell-1} a_{\max}^\bsy \frac{C_q^\bsy}{a_{\min}^\bsy} (\|z\|_{H^{-1}(D)} + \|g\|_{H^{-1}(D)}) 2\frac{(|\bsnu|+1)!}{(\ln{2})^{|\bsnu|}} \\
&\quad+ \frac{1}{(a_{\min}^\bsy)^{1/2}} c_1c_2 h_{\ell-1} 2 \Ca a_{\max}^\bsy \frac{\|z\|_{H^{-1}(D)}}{(a_{\min}^\bsy)^{3/2}} \frac{\bsb^\bsnu}{(\ln{2})^{|\bsnu|}} (|\bsnu| + 1)!\\
&=h_{\ell-1} (|\bsnu|+1)! \frac{\bsb^\bsnu}{(\ln{2})^{|\bsnu|}} \Ca \frac{a_{\max}^\bsy}{a_{\min}^\bsy} \big( 2C_q^\bsy \Czg + \frac{2 c_1c_2}{ a_{\min}^\bsy} \|z\|_{H^{-1}(D)} \big)\\
&\leq h_{\ell-1} (|\bsnu|+1)! \frac{\bsb^\bsnu}{(\ln{2})^{|\bsnu|}} 4 \Ca \frac{a_{\max}^\bsy}{a_{\min}^\bsy} C_q^\bsy \Czg\,,
\end{align*}
where we used \eqref{eq:grad_a_bound2} in the second inequality and \eqref{eq:counting4} in the third inequality.
We can now apply \cite[Lemma 5]{kuo2017multilevel} to get
\begin{align*}
\|(\aysl)^{1/2} \nabla(\partial^\bsnu w^\bsy)\|_{L^2(D)}
&\leq \sum_{\bsm \leq \bsnu} \binom{\bsnu}{\bsm} \frac{|\bsm|!}{(\ln{2})^{|\bsm|}} \bsb^\bsm \frac{|(\bsnu-\bsm|+1)!}{(\ln{2})^{|\bsnu-\bsm|}}\bsb^{\bsnu-\bsm}\\
&\quad \times h_{\ell-1} 4 \Ca \frac{a_{\max}^\bsy}{a_{\min}^\bsy} C_q^\bsy ( \|z\|_{H^{-1}(D)} + \|g\|_{H^{-1}(D)}) \\
&\hspace{-10pt}= h_{\ell-1}  \frac{(|\bsnu| + 2)!}{2} \frac{\bsb^\bsnu}{(\ln{2})^{|\bsnu|}} 4\Ca \frac{a_{\max}^\bsy}{a_{\min}^\bsy} C_q^\bsy ( \|z\|_{H^{-1}(D)} + \|g\|_{H^{-1}(D)})\,,
\end{align*}
where we used the equality $\sum_{\bsm\leq\bsnu} \binom{\bsnu}{\bsm} |\bsm|! (|\bsnu-\bsm|+1)! = \frac{(|\bsnu|+2)!}{2}$, which is stated, e.g., in \cite[equation 9.5]{kuo2016application}. The claim follows from 
\begin{equation*}
	(a_{\min}^\bsy)^{1/2} \|\partial^\bsnu w^\bsy\|_{H_0^1(D)} \leq \|(\aysl)^{1/2} \nabla(\partial^\bsnu w^\bsy)\|_{L^2(D)}. \qedhere
\end{equation*}
\end{proof}

\subsection{Integration error on difference of two levels}
\label{sec:int_error}
\avb{When paper is essentially finished, check consistency of use of $q_\ell$ and $q_{\ell-1}$ shorthand notations as defined in QMC explanation. Perhaps it is better to get rid of these shorthand notations, as the $s$'s play a subtle role here such that hiding them behind a notation would be unwise.}

In this section we analyze the expected (w.r.t.~the random shifts) MSE for approximating the difference of two consecutive levels in the MLQMC estimator. To this end, we introduce the weighted Sobolev space $\mathcal{W}_{s,\boldsymbol{\gamma}}$, with norm given by
\begin{align*}
\|F\|^2_{\mathcal{W}_{s,\boldsymbol{\gamma}}} := \hspace{-5pt} \sum_{\mathfrak{u} \subseteq\{1:s\}} \frac{1}{\gamma_\mathfrak{u}}\int_{\mathbb{R}^{|\mathfrak{u}|}} \hspace{-3pt}\bigg( \int_{\mathbb{R}^{s-|\mathfrak{u}|}} \hspace{-3pt}\frac{\partial^{|\mathfrak{u}|}F}{\partial\bsy_\mathfrak{u}}(\bsy_{\mathfrak{u}},\bsy_{\{1:s\}\setminus \mathfrak{u}}) \hspace{-6pt} \prod_{j \in \{1:s\}\setminus\mathfrak{u}} \hspace{-6pt} \phi(y_j)\, \mathrm d\bsy_{\{1:s\}\setminus\mathfrak{u}} \bigg)^2 \prod_{j \in \mathfrak{u}} \psi_j^2(y_j)\, \mathrm d\bsy_\mathfrak{u}\,.
\end{align*}
Here $\{1:s\}$ is a shorthand notation for the set of indices $\{1,2,\ldots,s\}$. In the sum, $\bsy_\mathfrak{u} = (y_j)_{j\in\mathfrak{u}}$ denotes the \emph{active} variables, while $\bsy_{\{1:s\}\setminus\mathfrak{u}} = (y_j)_{j\notin\mathfrak{u}}$ denotes the \emph{inactive} variables. The constants $\gamma_{\mathfrak{u}}$ are weights, collected formally in $\bs{\gamma}$, and the functions $\psi_j : \mathbb{R} \rightarrow \mathbb{R}^+$ determine the behavior of the functions in the space. For the analysis, based on \cite{graham2015, kuo2010randomly, nichols2014fast} to hold, we consider functions $\psi_j^2(y) = \exp(-\pg{9}\alpha_j |y|)$ with $\alpha_j > 0$ to be specified below.

In the multilevel estimator for our gradient we want to apply the QMC rule to the difference $q_{s_\ell}^\bsy - q_{s_{\ell-1}}^\bsy$.
On a level $\ell \in \{1,\ldots,L\}$ we can use Fubini's theorem and \cite[Theorem 15]{graham2015} to get
\begin{align}
\mathcal{V}_\ell &= \int_D \mathbb{V}_{\Delta}[\mathcal{Q}_{N_\ell,R_\ell}(q_{s_\ell}^\bsy - q_{s_{\ell-1}}^\bsy)] \mathrm dx = \frac{1}{R_\ell} \int_D \mathbb{V}_{\Delta}[\mathcal{Q}_{N_\ell}(q_{s_\ell}^\bsy - q_{s_{\ell-1}}^\bsy)] \mathrm dx\notag\\ 
&= \mathbb{E}_\Delta \big[\| \mathcal{Q}_{N}(q_{s_\ell}^\bsy - q_{s_{\ell-1}}^\bsy) - \mathbb{E}[F] \|_{L^2(D)}^2\big] = \int_D \mathbb{E}_{\Delta}[(\mathcal{Q}_{N}(q_{s_\ell}^\bsy - q_{s_{\ell-1}}^\bsy) - \mathbb{E}[q_{s_\ell}^\bsy - q_{s_{\ell-1}}^\bsy])^2] \mathrm dx \notag\\
&\leq  \frac{1}{R_\ell}  \bigg( \sum_{\emptyset \neq \mathfrak{u} \subset \{1:s_\ell\}} \gamma_{\mathfrak{u}}^\lambda \prod_{j\in \mathfrak{u}} \varrho_j(\lambda) \bigg)^{1/\lambda} (\varphi_{\text{tot}}(N))^{-1/\lambda} \int_D \|q_{s_\ell}^\bsy - q_{s_{\ell-1}}^\bsy\|^2_{\mathcal{W}_{s_\ell,\boldsymbol{\gamma}}} \mathrm dx\label{eq:varianceQMCell}
\end{align}
where 
$$\varrho_j(\lambda) := 2\bigg(\frac{\sqrt{2\pi}\exp(\alpha_j^2/\eta^*)}{\pi^{2-2\eta^*}(1-\eta^*)\eta^*}\bigg)^\lambda \zeta\left(\lambda + \frac{1}{2}\right).$$
Here $\eta^* = (2\lambda-1)/(4\lambda)$, $\zeta(x)$ denotes the Riemann Zeta function and $\varphi_{\text{tot}}(N):=|\{1\leq z\leq N \;|\; \gcd(z,N) = 1\}|$ denotes the Euler totient function. In particular, if $N$ is a power of a prime, it can be shown that $1/\varphi_{\text{tot}}(N) \leq 2/N$. By using the shorthand notation $F(\bsy) := q_{s_\ell}^\bsy - q_{s_{\ell-1}}^\bsy$, we observe that 
\begin{align}
&\int_D \|F\|_{\mathcal{W}_{s,\boldsymbol{\gamma}}}^2 \mathrm dx\notag\\ &= \int_D
\sum_{\mathfrak{u} \subset\{1:s\}} \frac{1}{\gamma_\mathfrak{u}} \int_{\mathbb{R}^{|\mathfrak{u}|}} \bigg( \int_{\mathbb{R}^{s-|\mathfrak{u}|}} \frac{\partial^{|\mathfrak{u}|}F}{\partial\bsy_{\mathfrak{u}}} \prod_{j\in \{1:s\}\setminus\mathfrak{u}} \phi(y_j) \mathrm d\bsy_{\{1:s\}\setminus{\mathfrak{u}}} \bigg)^2 \prod_{j\in \mathfrak{u}} \psi_j^2(y_j) \mathrm d\bsy_{\mathfrak{u}}
\mathrm dx\notag\\
&\leq \int_D
\sum_{\mathfrak{u} \subset\{1:s\}} \frac{1}{\gamma_\mathfrak{u}} \int_{\mathbb{R}^{|\mathfrak{u}|}} 
\int_{\mathbb{R}^{s-|\mathfrak{u}|}} \bigg(\frac{\partial^{|\mathfrak{u}|}F}{\partial \bsy_{\mathfrak{u}}} \prod_{j\in \{1:s\}\setminus\mathfrak{u}} \phi(y_j)\bigg)^2 \mathrm d\bsy_{\{1:s\}\setminus{\mathfrak{u}}} 
\prod_{j\in \mathfrak{u}} \psi_j^2(y_j) \mathrm d\bsy_{\mathfrak{u}}
\mathrm dx\notag\\
&=  \sum_{\mathfrak{u} \subset\{1:s\}} \frac{1}{\gamma_\mathfrak{u}} \int_{\mathbb{R}^{|\mathfrak{u}|}} 
 \int_{\mathbb{R}^{s-|\mathfrak{u}|}} \Big\|\frac{\partial^{|\mathfrak{u}|}F}{\partial \bsy_{\mathfrak{u}}}\Big\|_{L^2(D)}^2 \bigg(\prod_{j\in \{1:s\}\setminus\mathfrak{u}} \phi(y_j)\bigg)^2 \mathrm d\bsy_{\{1:s\}\setminus{\mathfrak{u}}} 
 \prod_{j\in \mathfrak{u}} \psi_j^2(y_j) \mathrm d\bsy_{\mathfrak{u}}\notag\\
&\leq c_2^2 \sum_{\mathfrak{u} \subset\{1:s\}} \frac{1}{\gamma_\mathfrak{u}} \int_{\mathbb{R}^{|\mathfrak{u}|}} 
 \int_{\mathbb{R}^{s-|\mathfrak{u}|}} \Big\|\frac{\partial^{|\mathfrak{u}|}F}{\partial \bsy_{\mathfrak{u}}}\Big\|_{H_0^1(D)}^2 \bigg(\prod_{j\in \{1:s\}\setminus\mathfrak{u}} \phi(y_j)\bigg)^2 \mathrm d\bsy_{\{1:s\}\setminus{\mathfrak{u}}} 
 \prod_{j\in \mathfrak{u}} \psi_j^2(y_j) \mathrm d\bsy_{\mathfrak{u}}\,.\label{eq:doublenorm}
\end{align}

Thus we take $F = q_{s_\ell}^\bsy - q_{s_{\ell-1}}^\bsy$ and plug \eqref{eq:doublenorm} into \eqref{eq:varianceQMCell} to obtain the following result. 

\begin{theorem}
Let $\psi_j^2(y) := \exp(-9\alpha_j|y|)$ for $\max{(b_j,\alpha_{\min})} < \alpha_j < \alpha_{\max}$ for all $j\in\mathfrak{u}$ and some $0<\alpha_{\min}< \alpha_{\max}<\infty$. The variance $\mathcal{V}_\ell$ for approximating the difference of two consecutive levels in the MLQMC estimator satisfies, for all $\lambda \in (1/2,1]$,
\begin{align*}
\mathcal{V}_\ell \leq \frac{1}{R_\ell}\, \varphi_{tot}(N_\ell)^{-1/\lambda} C_{s_\ell,\boldsymbol{\gamma}} \bigg( h_{\ell-1} c_2 C_{zg} C_P \exp(\|\bar{Z}\|_{\infty}) \bigg)^2 \exp\Big( \frac{81}{4} \|\bsb\|_2^2 + \frac{9}{\sqrt{\pi}} \|\bsb\|_1 \Big),
\end{align*}
with $C_P$ some constant depending only on $c_1, c_2$ and $C_d$ and where
\begin{align*}
C_{s,\boldsymbol{\gamma}} := \bigg( \sum_{\emptyset \neq \mathfrak{u} \subset \{1:s_\ell\}} \gamma_{\mathfrak{u}}^\lambda \prod_{j\in \mathfrak{u}} \varrho_j(\lambda) \bigg)^{1/\lambda} 
\sum_{\mathfrak{u}\subset\{1:s_\ell\}} \frac{1}{\gamma_{\mathfrak{u}}} \bigg( \frac{(|\mathfrak{u}|+2)! (|\mathfrak{u}|+6)}{3(\ln{2})^{|\mathfrak{u}|}}   \bigg)^2 \bigg(\prod_{j\in \mathfrak{u}}\frac{\tilde{b}_j^2}{\alpha_j - b_j} \bigg)
\end{align*}
and
\begin{align}\label{eq:varrho}
\varrho_j(\lambda) := 2\bigg(\frac{\sqrt{2\pi}\exp(\alpha_j^2/\eta^*)}{\pi^{2-2\eta^*}(1-\eta^*)\eta^*}\bigg)^\lambda \zeta\left(\lambda + \frac{1}{2}\right)\,.
\end{align}
\end{theorem}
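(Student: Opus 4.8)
The plan is to substitute the two-level derivative bounds of the preceding lemmas into the chain (\ref{eq:varianceQMCell})--(\ref{eq:doublenorm}) and then evaluate the resulting weighted integrals coordinate by coordinate. Write $F := \qysl - \qysll$. In the weighted norm only the first-order mixed derivative $\partial^{|\mathfrak{u}|}F/\partial\bsy_\mathfrak{u} = \partial^\bsnu F$ appears, where $\bsnu$ is the $0$--$1$ multi-index supported on $\mathfrak{u}$; hence $|\bsnu| = |\mathfrak{u}|$ and $\bsb^\bsnu = \prod_{j\in\mathfrak{u}} b_j$. Starting from (\ref{eq:varianceQMCell}), the prefactor $\frac{1}{R_\ell}\varphi_{\mathrm{tot}}(N_\ell)^{-1/\lambda}(\sum_{\emptyset\neq\mathfrak{u}}\gamma_\mathfrak{u}^\lambda\prod_{j\in\mathfrak{u}}\varrho_j(\lambda))^{1/\lambda}$ is already in the desired shape, so the work reduces to bounding $\int_D\|F\|^2_{\mathcal{W}_{s_\ell,\boldsymbol{\gamma}}}\,\mathrm dx$ through (\ref{eq:doublenorm}).

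For this I would estimate $\|\partial^\bsnu F\|_{H_0^1(D)}$ by the regularity bound for the difference of two levels (Lemma \ref{lemma:diffell}, and, on the part of the difference coming from the finite element discretization, Lemma \ref{lemma:diff}, which supplies the dominant factorial and growth rate). This isolates a deterministic factor $h_{\ell-1}\frac{(|\mathfrak{u}|+2)!(|\mathfrak{u}|+6)}{3(\ln 2)^{|\mathfrak{u}|}}\prod_{j\in\mathfrak{u}} b_j$ from a $\bsy$-dependent prefactor $G(\bsy)$ of order $C_{zg}\,C_q^\bsy\,\widetilde{C}^\bsy (a_{\max}^\bsy)^{1/2}/a_{\min}^\bsy$, all purely geometric constants ($c_1,c_2,C_d$) being absorbed into $C_P$. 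Squaring and inserting into (\ref{eq:doublenorm}), the deterministic factor pulls out of both the spatial integral over $D$ and the $\bsy$-integral, leaving $\int_D\|F\|^2_{\mathcal{W}_{s_\ell,\boldsymbol{\gamma}}}\,\mathrm dx$ bounded by a sum over $\mathfrak{u}$ of $\frac{c_2^2}{\gamma_\mathfrak{u}}(\frac{(|\mathfrak{u}|+2)!(|\mathfrak{u}|+6)}{3(\ln 2)^{|\mathfrak{u}|}})^2\prod_{j\in\mathfrak{u}} b_j^2$ times the integral of $G(\bsy)^2$ against $\prod_{j\notin\mathfrak{u}}\phi(y_j)^2$ over the inactive variables and $\prod_{j\in\mathfrak{u}}\psi_j^2(y_j)$ over the active ones.

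The technical heart is the growth and integration of $G(\bsy)^2$. From $a_s^\bsy(x_i) = \exp(\sum_j B_{ij}y_j + \overline{Z}_i)$ in (\ref{eq:a_CE}) and $b_j = \|B_{\cdot,j}\|_{\max}$ one gets $a_{\max}^\bsy \le e^{\|\bar Z\|_\infty}e^{\sum_j b_j|y_j|}$ and $1/a_{\min}^\bsy \le e^{\|\bar Z\|_\infty}e^{\sum_j b_j|y_j|}$, while $|\asy|_{C^1(\overline D)}\le C_d\, a_{\max}^\bsy$ by (\ref{eq:grad_a_bound2}); counting the powers ($\tfrac32$ of $a_{\max}^\bsy$ and $3$ of $1/a_{\min}^\bsy$) yields $G(\bsy)^2 \lesssim (C_P C_{zg}e^{\|\bar Z\|_\infty})^2 \exp(9\sum_j b_j|y_j|)$, which is precisely why the weight $\psi_j^2(y)=\exp(-9\alpha_j|y|)$ was chosen. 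The integral then factorizes across coordinates. On each active coordinate, $\int_{\bbR} e^{9b_j|y|}e^{-9\alpha_j|y|}\,\mathrm dy = \frac{2}{9(\alpha_j-b_j)}$, which converges exactly because $\alpha_j > b_j$ and contributes $\prod_{j\in\mathfrak{u}}\tilde b_j^2/(\alpha_j-b_j)$ with $\tilde b_j^2 = \tfrac29 b_j^2$. On each inactive coordinate, completing the square in $\int_\bbR e^{9b_j|y|}\phi(y)^2\,\mathrm dy$ and bounding the residual error function via $\mathrm{erf}(x)\le \tfrac{2}{\sqrt\pi}x$ and then $1+x\le e^x$ gives $\le \frac{1}{2\sqrt\pi}\exp(\tfrac{81}{4}b_j^2 + \tfrac{9}{\sqrt\pi}b_j)$; extending the product to all $j$ produces the global factor $\exp(\tfrac{81}{4}\|\bsb\|_2^2 + \tfrac{9}{\sqrt\pi}\|\bsb\|_1)$.

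Assembling the $\mathfrak{u}$-sum with the weights $1/\gamma_\mathfrak{u}$, the squared factorial factors and the active-coordinate product reproduces the second factor of $C_{s_\ell,\boldsymbol{\gamma}}$, while the worst-case QMC constant from (\ref{eq:varianceQMCell}) gives the first, and the deterministic prefactor supplies $(h_{\ell-1}c_2 C_{zg}C_P e^{\|\bar Z\|_\infty})^2$. The main obstacle is the constant bookkeeping in this last step: one must verify that the lognormal prefactor grows at the exact rate $\exp(\tfrac92\sum_j b_j|y_j|)$ so that $\psi_j^2$ just dominates it on the active coordinates (needing only $\alpha_j>b_j$), and carry the completed-square estimate on the inactive coordinates uniformly in the dimension $s_\ell$ so that the bound does not degrade as $s_\ell\to\infty$; the constraints $\alpha_{\min}<\alpha_j<\alpha_{\max}$ are what keep each $\varrho_j(\lambda)$ uniformly bounded.
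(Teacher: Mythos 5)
Your proposal follows essentially the same route as the paper's proof: a triangle-inequality split of $\partial^\bsnu(q^\bsy_{s_\ell,h_\ell}-q^\bsy_{s_{\ell-1},h_{\ell-1}})$ into the two FE-error terms (Lemma \ref{lemma:diff}) and the level-difference term (Lemma \ref{lemma:diffell}), the lognormal growth bound giving the exponent $\tfrac{9}{2}\bsb^\top|\bsy|$, and the coordinate-wise factorization with completion of the square on the inactive variables and the $\exp(-9\alpha_j|y|)$ weights on the active ones. The only minor bookkeeping discrepancy is your normalization $\tilde b_j^2=\tfrac{2}{9}b_j^2$: the paper instead defines $\tilde b_j$ as $b_j$ divided by the inactive-coordinate factor $\tfrac{1}{\sqrt{\pi}}\exp(\tfrac{81}{4}b_j^2)\Phi(\tfrac{9b_j}{\sqrt{2}})$, which is what legitimately lets you extend that product over all $j\in\{1{:}s_\ell\}$ (that factor is not bounded below by $1$, so the extension is not free), but this does not affect the validity of your argument.
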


\begin{proof}
For this proof it is important to recall from \S\ref{sec:stoch_field} that $\qysll$ is the adjoint state corresponding to the stochastic field $a_{s_{\ell-1}}^\bsy$, which in turn is obtained by interpolating the field $a_{s_\ell}^\bsy$ in the nodes of a coarser CE method. In both cases $\bsy \in \mathbb{R}^{s_\ell}$.
By the triangle inequality we have
\begin{align}\label{eq:expanddiff}
&\|\partial^\bsnu (q^\bsy_{s_\ell,h_\ell} - q^\bsy_{s_{\ell-1},h_{\ell-1}})\|_{H_0^1(D)}\\
&\quad\leq \underbrace{\|\partial^\bsnu(q^\bsy_{s_\ell,h_\ell} - q^\bsy_{s_{\ell}})\|_{H_0^1(D)}}_{\text{term}_1} 
+ \underbrace{\|\partial^{\bsnu}(q^\bsy_{s_{\ell}}-q^\bsy_{s_{\ell-1}})\|_{H_0^1(D)}}_{\text{term}_2}\notag
+ \underbrace{\|\partial^{\bsnu}(q^\bsy_{s_{\ell-1}}-q^\bsy_{s_{\ell-1},h_{\ell-1}})\|_{H_0^1(D)}}_{\text{term}_3}\,,\notag
\end{align}
which in turn can be estimated using Lemma \ref{lemma:diff} ($\text{term}_1$ and $\text{term}_3$) and Lemma \ref{lemma:diffell} ($\text{term}_2$):
\begin{align*}
\text{term}_1 &\lesssim h_\ell\,\frac{\bsb^\bsnu}{(\ln 2)^{|\bsnu|}} \frac{(|\bsnu|+2)! (|\bsnu|+6)}{3} \,  \frac{(a_{\max}^\bsy)^{1/2}\widetilde{C}^\bsy C_q^\bsy}{a_{\min}^\bsy}(\|z\|_{H^{-1}(D)}+\|g\|_{H^{-1}(D)})\\
\text{term}_2 &\leq h_{\ell-1}\,\frac{\bsb^\bsnu}{(\ln{2})^{|\bsnu|}} (|\bsnu| + 2)! \frac{a_{\max}^\bsy C_q^\bsy}{(a_{\min}^\bsy)^{3/2}} 2\Ca(\|z\|_{H^{-1}(D)} + \|g\|_{H^{-1}(D)})\\
\text{term}_3 &\lesssim h_{\ell-1}\,\frac{\bsb^\bsnu}{(\ln 2)^{|\bsnu|}} \frac{(|\bsnu|+2)! (|\bsnu|+6)}{3} \,  \frac{(a_{\max}^\bsy)^{1/2}\widetilde{C}^\bsy C_q^\bsy}{a_{\min}^\bsy}(\|z\|_{H^{-1}(D)}+\|g\|_{H^{-1}(D)}).
\end{align*}

Recalling that $\widetilde{C}^\bsy = \max{(1,C^\bsy)} = \max{(1,2\frac{|a_s^\bsy|_{C^1(\overline{D})}}{a_{\min}^\bsy})}\leq \max{(1,2C_d\frac{a_{\max}^\bsy}{a_{\min}^\bsy})}$ and $C_q^\bsy = \max{(1,\frac{c_1c_2}{a_{\min}^\bsy})}$, we can further estimate
\begin{align}\label{eq:terms123}
\text{term}_1 + \text{term}_2 + \text{term}_3 \lesssim h_{\ell-1} \frac{\bsb^\bsnu}{(\ln{2})^{|\bsnu|}} \frac{(|\bsnu|+2)! (|\bsnu|+6)}{3} (\|z\|_{H^{-1}(D)}+\|g\|_{H^{-1}(D)})\notag \\
\times \bigg( 2\frac{(a_{\max}^\bsy)^{1/2}}{a_{\min}^\bsy} \Big(1+ 2C_d \frac{a_{\max}^\bsy}{a_{\min}^\bsy}\Big) \Big(1+ \frac{c_1c_2}{a_{\min}^\bsy}\Big) + \Big(\frac{a_{\max}^\bsy}{(a_{\min}^\bsy)^{3/2}} \Big) \Big( 1+ \frac{c_1c_2}{a_{\min}^\bsy}\Big) \bigg)\,,
\end{align}
so the bound depends on $\bsy$ only through $a_{\min}^\bsy$ and $a_{\max}^\bsy$. We use
\begin{align*}
(a_{\min}^\bsy)^{-1}, a_{\max}^\bsy\leq \exp(\|\bar{Z}\|_{\infty}) \, \exp(\bsb^\top |\bsy|)
\end{align*}
to derive the bounds
\begin{align*}
	\frac{(a_{\max}^\bsy)^{1/2}}{a_{\min}^\bsy} &\leq \big(\exp(\|\bar{Z}\|_{\infty}) \, \exp(\bsb^\top |\bsy|)\big)^{3/2}, \\
	\frac{a_{\max}^\bsy}{(a_{\min}^\bsy)^{3/2}} &\leq \big(\exp(\|\bar{Z}\|_{\infty}) \, \exp(\bsb^\top |\bsy|)\big)^{5/2}, \\
	\frac{c_1c_2}{a_{\min}^\bsy} &\leq c_1c_2 \exp(\|\bar{Z}\|_{\infty}) \, \exp(\bsb^\top |\bsy|), \\
	2C_d \frac{a_{\max}^\bsy}{a_{\min}^\bsy} &\leq 2C_d \big(\exp(\|\bar{Z}\|_{\infty}) \, \exp(\bsb^\top |\bsy|)\big)^{2}.
\end{align*}
Moreover, we have $1 \leq \exp(\|\bar{Z}\|_{\infty}) \, \exp(\bsb^\top |\bsy|) \leq \big(\exp(\|\bar{Z}\|_{\infty}) \, \exp(\bsb^\top |\bsy|)\big)^2$. Using these estimates we conclude that
\begin{align*}
\eqref{eq:terms123} \leq h_{\ell-1} \frac{\bsb^\bsnu}{(\ln{2})^{|\bsnu|}} \frac{(|\bsnu|+2)! (|\bsnu|+6)}{3} \Czg C_P \Big(\exp(\|\bar{Z}\|_{\infty}) \, \exp(\bsb^\top |\bsy|)\Big)^{9/2}
\end{align*}
with $C_P$ some constant which depends only on $c_1, c_2$ and $C_d$.

Replacing $\partial^\bsnu$ by $\frac{\partial^{|\mathfrak{u}|}}{\partial\bsy_{\mathfrak{u}}}$ with $\mathfrak{u}\subseteq \{1:s_\ell\}$ in \eqref{eq:expanddiff}, i.e., restricting to the case where all $\nu_j\leq 1$ as is the case in the definition of the $\mathcal{W}_{s,\boldsymbol{\gamma}}$-norm, we obtain 
\begin{align*}
\Big\|\frac{\partial^{|\mathfrak{u}|}}{\partial\bsy_{\mathfrak{u}}} (q^\bsy_{s_\ell,h_\ell} &- q^\bsy_{s_{\ell-1},h_{\ell-1}})\Big\|_{H_0^1(D)}
\\
&\leq h_{\ell-1} \Big( \prod_{j\in \mathfrak{u}} b_j \Big) \frac{(|\mathfrak{u}|+2)! (|\mathfrak{u}|+6)}{3(\ln{2})^{|\mathfrak{u}|}} C_{zg} C_P \Big( \exp(\|\bar{Z}\|_{\infty}) \exp(\bsb^\top |\bsy|) \Big)^{9/2}.
\end{align*}

Moreover, the product form of this bound allows us to group the factors in \eqref{eq:doublenorm}, with $F$ taken to be $q^\bsy_{s_\ell,h_\ell} - q^\bsy_{s_{\ell-1},h_{\ell-1}}$, for $j\in \mathfrak{u}$ and $j \in \{1:s_\ell\} \setminus \mathfrak{u}$ separately, i.e.,
\begin{equation}
	\exp\big(\frac{9}{2}\bsb^\top |\bsy|\big) = 
	\prod_{j \in \mathfrak{u}}   \exp\big(\frac{9}{2} b_j |y_j|\big) 
	\prod_{j \in \{1:s\}\setminus\mathfrak{u}}   \exp\big(\frac{9}{2} b_j |y_j|\big) .
\end{equation}
 We first estimate the factors $j\in \{1:s_\ell\}\setminus \mathfrak{u}$
\begin{align*}
\int_{\mathbb{R}^{s_\ell-|\mathfrak{u}|}} \bigg(  \prod_{j\in \{1:s_\ell\}\setminus \mathfrak{u}} \exp\big(\frac{9}{2} b_j |y_j|\big)  \prod_{j \in \{1:s_\ell\}\setminus\mathfrak{u}} \phi(y_j)\bigg)^2\, \mathrm d\bsy_{\{1:s_\ell\}\setminus\mathfrak{u}}\!\!\!\!\!\!\!\!\!\!\!\!\!\!\!\!\!\!\!\!\!\!\!\!\!\!\!\!\!\!\!\!\!\!\!\!\!\!\!\!\!\!\!\!\!\!\!\!\!\!\!\!\!\!\!\!\!\!\!\!\!\!\!\!\!\!\!\!\!\!\!\!\!\!\!\!\!\!\!\!\!\!\!\!\!\!\!\!\!\!&\\ 
&= \int_{\mathbb{R}^{s_\ell-|\mathfrak{u}|}} \prod_{j \in \{1:s_\ell\}\setminus\mathfrak{u}} \bigg(  \exp\big(\frac{9}{2} b_j |y_j|\big)  \phi(y_j)\bigg)^2\, \mathrm d\bsy_{\{1:s_\ell\}\setminus\mathfrak{u}}\\
&= \int_{\mathbb{R}^{s_\ell-|\mathfrak{u}|}} \prod_{j \in \{1:s_\ell\}\setminus\mathfrak{u}} \bigg(  \exp\big(\frac{9}{2} b_j |y_j|\big)  \frac{1}{\sqrt{2\pi}}\exp\big(\frac{-y_j^2}{2}\big)\bigg)^2\, \mathrm d\bsy_{\{1:s_\ell\}\setminus\mathfrak{u}}\\
&\pg{= \prod_{j \in \{1:s_\ell\}\setminus\mathfrak{u}} 2\int_{0}^\infty \bigg(  \exp\big(\frac{9}{2} b_j |y|\big)  \frac{1}{\sqrt{2\pi}}\exp\big(\frac{-y^2}{2}\big)\bigg)^2\, \mathrm dy}\\
&\pg{= \prod_{j \in \{1:s_\ell\}\setminus\mathfrak{u}} 2\int_{0}^\infty \bigg( \exp\Big(\frac{81}{8} b_j^2 \Big) \frac{1}{\sqrt{2\pi}}\exp{\Big(\frac{-(\frac{9}{2}b_j-y_j)^2}{2}\Big)}\bigg)^2\, \mathrm dy}\\
&= \prod_{j \in \{1:s_\ell\}\setminus\mathfrak{u}} \exp\Big(\frac{81}{4} b_j^2 \Big) 2\int_{0}^\infty  \bigg( \frac{1}{\sqrt{2\pi}}\exp{\Big(\frac{-(\frac{9}{2}b_j-y)^2}{2}\Big)}\bigg)^2\, \mathrm dy\\
&= \prod_{j \in \{1:s_\ell\}\setminus\mathfrak{u}} \exp\Big(\frac{81}{4} b_j^2 \Big)  \frac{1}{\sqrt{\pi}} \underbrace{\int_{0}^\infty  \frac{1}{\sqrt{2\pi\times 0.5}}\exp{\Big(\frac{-(\frac{9}{2}b_j-y)^2}{0.5 \times 2}\Big)}\, \mathrm dy}_{1-\text{cdf of } y\sim\mathcal{N}(\frac{9}{2}b_j,0.5)\text{ at }0}\,,\\
&= \prod_{j \in \{1:s_\ell\}\setminus\mathfrak{u}} \exp\Big(\frac{81}{4} b_j^2 \Big)  \frac{1}{\sqrt{\pi}}\Phi\bigg(\frac{9b_j }{\sqrt{2}}\bigg)\,,
\end{align*}
where $\Phi$ denotes the univariate cumulative standard normal distribution function.

Secondly, we estimate the factors $j\in \mathfrak{u}$
\begin{align*}
\int_{\mathbb{R}^{|\mathfrak{u}|}} \prod_{j\in \mathfrak{u}} \exp(9 b_j |y_j|) \psi_j^2(y_j) \mathrm d\bsy_{\mathfrak{u}} =  \prod_{j\in \mathfrak{u}} \bigg(\int_{-\infty}^\infty \exp(9 b_j |y|) \psi_j^2(y) \mathrm dy \bigg)\,.
\end{align*}
With $\psi_j^2(y) := \exp(-9\alpha_j|y|)$ for $\max{(b_j,\alpha_{\min})} < \alpha_j < \alpha_{\max}$ for all $j\in\mathfrak{u}$ and some $0<\alpha_{\min}< \alpha_{\max}<\infty$, we get
\begin{align*}
\int_{\mathbb{R}^{|\mathfrak{u}|}} \prod_{j\in \mathfrak{u}} \exp(9 b_j |y_j|) \psi_j^2(y_j) \mathrm d\bsy_{\mathfrak{u}} = \prod_{j\in \mathfrak{u}}\frac{1}{\alpha_j - b_j}\,.
\end{align*}

Defining 
\begin{align*}
\tilde{b}_j := \frac{b_j}{(1/\sqrt{\pi}) \exp(\frac{81}{4}b_j^2) \Phi\bigg(\frac{9b_j }{\sqrt{2}}\bigg)}
\end{align*}
we arrive at
\begin{align*}
\int_{\mathbb{R}^{|\mathfrak{u}|}} \bigg( \int_{\mathbb{R}^{s_\ell-|\mathfrak{u}|}} \bigg( \exp\Big(\frac{9}{2}\bsb^\top |\bsy| \Big) \prod_{j \in \mathfrak{u}} b_j \prod_{j \in \{1:s\}\setminus\mathfrak{u}} \phi(y_j)\bigg)^2\, \mathrm d\bsy_{\{1:s_\ell\}\setminus\mathfrak{u}} \bigg) \prod_{j \in \mathfrak{u}} \psi_j^2(y_j)\, \mathrm d\bsy_\mathfrak{u} \\
= \bigg(\prod_{j \in \{1:s_\ell\} \setminus \mathfrak{u}} \frac{1}{\sqrt{\pi}} \exp(\frac{81}{4}b_j^2) \Phi\bigg(\frac{9b_j }{\sqrt{2}}\bigg) \bigg) \bigg(\prod_{j\in \mathfrak{u}}\frac{b_j^2}{\alpha_j - b_j} \bigg)\\
= \bigg(\prod_{j \in \{1:s_\ell\}} \frac{1}{\sqrt{\pi}} \exp(\frac{81}{4}b_j^2) \Phi\bigg(\frac{9b_j }{\sqrt{2}}\bigg)\bigg) \bigg(\prod_{j\in \mathfrak{u}}\frac{\tilde{b}_j^2}{\alpha_j - b_j} \bigg)\,.
\end{align*}

Using $\Phi\big(\frac{9}{\sqrt{2}}b_j\big) \leq \frac{1}{2} (1+\text{erf}(\frac{9}{\sqrt{2}}b_j/\sqrt{2})) \leq \frac{1}{2} (1+2 \frac{9}{2\sqrt{\pi}}b_j) \leq \frac{1}{2} \exp(\frac{9}{\sqrt{\pi}}b_j)$ for all $j$, where $\text{erf}$ denotes the Gauss error function, we have
\begin{align*}
\prod_{j \in \{1:s_\ell\}} \frac{1}{\sqrt{\pi}} \exp\Big(\frac{81}{4}b_j^2\Big)  \Phi(\sqrt{2}b_j) & \leq \prod_{j \in \{1:s\}} \frac{1}{2\sqrt{\pi}} \exp\Big(\frac{81}{4}b_j^2\Big) \exp\Big(\frac{9}{\sqrt{\pi}}b_j\Big)\\
&< \exp\Big( \frac{81}{4}\sum_{j \in \{1:s_\ell\}} b_j^2 + \frac{9}{\sqrt{\pi}} \sum_{j \in \{1:s\}} b_j \Big)\\
&= \exp\Big( \frac{81}{4} \|\bsb\|_2^2 + \frac{9}{\sqrt{\pi}} \|\bsb\|_1 \Big)\,.
\end{align*}


We have thus proved the following
\begin{align*}
\int_D \|&q^\bsy_{s_\ell,h_\ell} - q^\bsy_{s_{\ell-1},h_{\ell-1}}\|_{\mathcal{W}_{s_\ell,\boldsymbol{\gamma}}}^2 \mathrm dx\\ 
&\leq \bigg( h_{\ell-1} c_2 (\|z\|_{H^{-1}(D)}+\|g\|_{H^{-1}(D)}) C_P \exp(\|\bar{Z}\|_{\infty}) \bigg)^2\\
&\quad\times \sum_{\mathfrak{u}\subset\{1:s_\ell\}} \frac{1}{\gamma_{\mathfrak{u}}}  \bigg(\frac{(|\mathfrak{u}|+2)! (|\mathfrak{u}|+6)}{3(\ln{2})^{|\mathfrak{u}|}}   \bigg)^2 \bigg(\prod_{j\in \mathfrak{u}}\frac{\tilde{b}_j^2}{\alpha_j - b_j} \bigg) \exp\Big( \frac{81}{4} \|\bsb\|_2^2 + \frac{9}{\sqrt{\pi}} \|\bsb\|_1 \Big)\,. \qedhere
\end{align*}
\end{proof}

\pg{
Without a careful choice of the weight parameters $\gamma_{\mathfrak{u}}$, the quantity $C_{s_\ell,\boldsymbol{\gamma}}$ might grow with increasing $s_\ell$. To ensure that $C_{s_\ell,\boldsymbol{\gamma}}$ is bounded independently of $s_\ell$, we choose the weight parameters to ensure this.} This requires an assumption on the boundedness of $\|\bsb\|_p$, which is also made in \cite[Section 3.4]{graham2018circulant}, where it is discussed in detail.
\begin{lemma}
Let $N$ be a power of a prime number and let the assumptions of the preceding Theorem hold. Moreover, let $\lambda \in (\frac{1}{2},1]$ and assume that $\|\bsb\|_p$ is uniformly bounded with respect to $s_\ell$ for $p = 2\lambda / (1+\lambda)$. Then there is a constant $C(\lambda)>0$ such that
\begin{align*}
\mathcal{V}_{\ell} \leq \frac{1}{R_\ell} h_{\ell}^2 C(\lambda) N^{- \frac{1}{\lambda}}\,.
\end{align*}
\end{lemma}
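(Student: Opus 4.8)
The plan is to begin from the bound of the preceding Theorem and peel off every factor except $C_{s_\ell,\boldsymbol{\gamma}}$, then bound that quantity uniformly in $s_\ell$ through an explicit choice of weights $\gamma_{\mathfrak{u}}$. First, since $N$ is a power of a prime we have $1/\varphi_{tot}(N)\le 2/N$, hence $\varphi_{tot}(N)^{-1/\lambda}\le 2^{1/\lambda}N^{-1/\lambda}$. Second, under the geometric mesh hierarchy $h_\ell\eqsim q^{-\ell}$ used throughout, consecutive mesh widths are comparable, $h_{\ell-1}\eqsim h_\ell$, so $h_{\ell-1}^2\lesssim h_\ell^2$. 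Third, because $p=2\lambda/(1+\lambda)\le 1$ for $\lambda\le 1$, a uniform bound on $\|\bsb\|_p$ also bounds $\|\bsb\|_1$ and $\|\bsb\|_2$ by monotonicity of the $\ell_q$-norms, so the factor $\exp(\tfrac{81}{4}\|\bsb\|_2^2+\tfrac{9}{\sqrt{\pi}}\|\bsb\|_1)$ together with all remaining prefactors ($c_2$, $\Czg$, $C_P$, $\exp(\|\bar{Z}\|_\infty)$) is bounded by a constant depending only on $\lambda$ and the fixed problem data. It thus remains to show $C_{s_\ell,\boldsymbol{\gamma}}\le C(\lambda)$ uniformly in $s_\ell$.

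Next I would perform the classical weight optimization. Writing $A_{\mathfrak{u}}:=\prod_{j\in\mathfrak{u}}\varrho_j(\lambda)$ and $B_{\mathfrak{u}}:=\big(\tfrac{(|\mathfrak{u}|+2)!(|\mathfrak{u}|+6)}{3(\ln 2)^{|\mathfrak{u}|}}\big)^2\prod_{j\in\mathfrak{u}}\tfrac{\tilde{b}_j^2}{\alpha_j-b_j}$, the quantity $C_{s_\ell,\boldsymbol{\gamma}}$ has the form $(\sum_{\emptyset\neq\mathfrak{u}}\gamma_{\mathfrak{u}}^\lambda A_{\mathfrak{u}})^{1/\lambda}\sum_{\mathfrak{u}}\gamma_{\mathfrak{u}}^{-1}B_{\mathfrak{u}}$, which a standard argument minimizes over $\boldsymbol{\gamma}$ by the product-and-order-dependent (POD) choice $\gamma_{\mathfrak{u}}=(B_{\mathfrak{u}}/A_{\mathfrak{u}})^{1/(1+\lambda)}$. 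With these weights both sums equal $S$ up to the harmless $\mathfrak{u}=\emptyset$ term, giving
\[
C_{s_\ell,\boldsymbol{\gamma}}\le S^{(1+\lambda)/\lambda},\qquad S:=\sum_{\mathfrak{u}\subseteq\{1:s_\ell\}}A_{\mathfrak{u}}^{1/(1+\lambda)}B_{\mathfrak{u}}^{\lambda/(1+\lambda)}.
\]

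The decisive structural gain is that $S$ factors into an order-dependent factor and a product over $j\in\mathfrak{u}$. Setting $c_j:=\varrho_j(\lambda)^{1/(1+\lambda)}\big(\tfrac{\tilde{b}_j^2}{\alpha_j-b_j}\big)^{\lambda/(1+\lambda)}$ and grouping by cardinality $n=|\mathfrak{u}|$, the elementary-symmetric bound $\sum_{|\mathfrak{u}|=n}\prod_{j\in\mathfrak{u}}c_j\le\frac{1}{n!}(\sum_j c_j)^n$ yields
\[
S\le\sum_{n=0}^{\infty}\Big(\tfrac{(n+2)!(n+6)}{3(\ln 2)^n}\Big)^{p}\frac{1}{n!}\Big(\sum_{j=1}^{s_\ell}c_j\Big)^{n}.
\]
Here I would verify $\sum_j c_j\lesssim\|\bsb\|_p^p$ uniformly in $s_\ell$: since $b_j\to 0$, the definition of $\tilde{b}_j$ forces $\tilde{b}_j\eqsim b_j$ (its numerator is $b_j$ and its denominator is pinched between $1/(2\sqrt{\pi})$ and a constant fixed by $\|\bsb\|_\infty$); choosing $\alpha_j\in(\max(b_j,\alpha_{\min}),\alpha_{\max})$ so that $\alpha_j-b_j$ stays bounded below keeps $\varrho_j(\lambda)$ bounded, as it depends on $\alpha_j$ only through $\exp(\alpha_j^2/\eta^*)\le\exp(\alpha_{\max}^2/\eta^*)$. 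Hence $c_j\lesssim b_j^{2\lambda/(1+\lambda)}=b_j^p$ and $\sum_j c_j\lesssim\|\bsb\|_p^p$, which is bounded by hypothesis.

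The main obstacle is then the convergence of the remaining order-dependent series uniformly in $s_\ell$, because $(n+2)!$ grows faster than $n!$; this is exactly where the summability exponent $p=2\lambda/(1+\lambda)$ is essential. Denoting the general term by $a_n$ and applying the ratio test, $a_{n+1}/a_n\sim \big(\sum_j c_j\big)(\ln 2)^{-p}\,n^{\,p-1}$ as $n\to\infty$. For $\lambda\in(1/2,1)$ one has $p<1$, so $n^{p-1}\to 0$ and the series converges unconditionally, with a bound depending only on $\lambda$ and the uniform bound on $\|\bsb\|_p$; the endpoint $\lambda=1$ (where $p=1$) is borderline, the ratio tending to $(\sum_j c_j)/\ln 2$, and convergence then follows once the $\ell_1$-size $\sum_j c_j$, controlled by $\|\bsb\|_1$, lies below $\ln 2$. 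In every case $S\le C(\lambda)$ uniformly in $s_\ell$, whence $C_{s_\ell,\boldsymbol{\gamma}}\le S^{(1+\lambda)/\lambda}\le C(\lambda)$, and combining this with the three reductions of the first paragraph gives $\mathcal{V}_\ell\le R_\ell^{-1}h_\ell^2\,C(\lambda)\,N^{-1/\lambda}$, as claimed.
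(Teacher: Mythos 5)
Your proof is correct and follows essentially the same route as the paper's: reduce to bounding $C_{s_\ell,\boldsymbol{\gamma}}$ uniformly in $s_\ell$, minimize over the weights with the POD choice, bound the order-$n$ elementary symmetric sums by $\frac{1}{n!}\big(\sum_j c_j\big)^n$ with $\sum_j c_j \lesssim \|\bsb\|_p^p$, and conclude with the ratio test. If anything you are more careful than the paper at the endpoint $\lambda=1$ (where $p=1$, the limiting ratio is a nonzero constant, and convergence needs that constant below one), a point the paper's proof glosses over by simply asserting $p<1$.
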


\begin{proof}
Since $N$ is a prime power, we have that $1/\varphi_{\text{tot}}(N) \leq 2/N$. Due to the preceding Theorem it is sufficient to find an upper bound on $C_{s_\ell,\boldsymbol{\gamma}}$ that is independent of $s_\ell$. To this end we choose the weights $\boldsymbol{\gamma}$ to minimize $C_{s_\ell,\boldsymbol{\gamma}}$. By \cite[lemma 18]{graham2015} the ``product and order dependent'' (POD) minimizer $\gamma^*$ of $C_{s_\ell,\boldsymbol{\gamma}}$ is given by
\begin{align*}
\gamma^* =  \bigg( \bigg( \frac{(|\mathfrak{u}|+2)! (|\mathfrak{u}|+6)}{3(\ln{2})^{|\mathfrak{u}|}}  \bigg)^2 \prod_{j\in \mathfrak{u}} \frac{\tilde{b}_j^2}{(\alpha_j - b_j) \varrho_j(\lambda)} \bigg)^{\frac{1}{1+\lambda}}\,.
\end{align*}
One can show that
\begin{align*}
C_{s_\ell,\gamma^*} = S_\lambda^{1+\frac{1}{\lambda}}\,,
\qquad \text{where} \qquad
S_\lambda = \sum_{\mathfrak{u}\subset \{1:s_\ell\}} \bigg[\bigg( \frac{(|\mathfrak{u}|+2)!(|\mathfrak{u}|+6)}{3 (\ln{2})^{|\mathfrak{u}|}} \bigg)^2 \prod_{j\in \mathfrak{u}} \frac{\tilde{b}_j^2 \varrho_j(\lambda)^{\frac{1}{\lambda}}}{\alpha_j-b_j}\bigg]^\frac{\lambda}{1+\lambda}\,,
\end{align*}
hence, it is sufficient to show that $S_\lambda < \infty$. To this end we choose the parameters $\alpha_j$ that minimize $S_\lambda$. We observe that all terms of $S_\lambda$ are positive, thus minimizing $S_\lambda$, or equivalently $C_{s_\ell,\gamma^*}$, with respect to the parameters $\{a_j\}_{j\geq1}$ is equivalent to minimizing each of the functions $\frac{\varrho_j(\lambda)^{\frac{1}{\lambda}}}{\alpha_j-b_j}$ with respect to $\alpha_j$. Due to \eqref{eq:varrho}, $\varrho_j(\lambda)^{\frac{1}{\lambda}} = c \exp(\alpha^2_j /\eta^*)$, for some constant $c$ independent of $\alpha_j$ and for $\eta^* = (2\lambda-1)/(4\lambda)$, leads to
\begin{align}\label{eq:alpha_j}
\alpha_j = \frac{1}{2} \bigg( b_j + \sqrt{b_j^2 + 1 - \frac{1}{2\lambda}}\bigg)
\end{align}
for the minimizer, see \cite[Corollary 21]{GKNSSS15}. 
Since $\|\bsb\|_p$ is bounded, we also have $\|\bsb\|_{\infty} \leq b_{\max}$ for all $s$, i.e., $b_j \leq b_{\max}$ for all $j = 1,\ldots,s_\ell$ and all $s_\ell$. 
We denote by $\alpha_{\max}$ the value of \eqref{eq:alpha_j} with $b_j$ replaced by $b_{\max}$. We have $\alpha_j \leq \alpha_{\max}$ for all $j = 1,\ldots,s_\ell$ and all $s_\ell$, and $\alpha_j - b_j \geq \alpha_{\max} - b_{\max}$. Furthermore, $\varrho_j(\lambda) \leq \varrho_{\max}(\lambda)$ for all $j$ and all $s$, where $\varrho_{\max}(\lambda)$ is the value of \eqref{eq:varrho} with $\alpha_j$ replaced by $\alpha_{\max}$.

From the definition of $\tilde{b}_j$ we see that $\tilde{b}_j \leq \sqrt{\pi}2 b_j$, so by setting $\lambda = \frac{p}{2-p}$ and $\tau_\lambda :=\frac{4\pi \varrho_{\max}(\lambda)^{\frac{1}{\lambda}}}{(\alpha_{\max} - b_{\max}) 3 (\ln{2})^{2}}$, we have
\begin{align*}
S_\lambda \leq \sum_{\mathfrak{u}\subset\{1:s_\ell\}} \big((|\mathfrak{u}|+2)!(|\mathfrak{u}|+6) \big)^{p} \prod_{j\in\mathfrak{u}} (\tau_\lambda b_j^2)^{\frac{p}{2}} &= \sum_{k = 0}^{s_\ell} \big((k+2)!(k+6) \big)^{p} \hspace{-8pt}\sum_{\mathfrak{u}\subset \{1:s_\ell\}, |\mathfrak{u}|=k} \prod_{j\in \mathfrak{u}} (\tau_\lambda b_j^2)^{\frac{p}{2}}\\
&\leq \sum_{k = 0}^{s_\ell} \frac{\big((k+2)!(k+6) \big)^{p}}{k!} \tau_\lambda^{\frac{p}{2}k} \bigg( \sum_{j=1}^{s_\ell} b_j^{p} \bigg)^k\\
&\leq \sum_{k = 0}^\infty \frac{\big((k+2)!(k+6) \big)^{p}}{k!} \tau_\lambda^{\frac{p}{2}k} \|\bsb\|_{p}^{pk} < \infty\,.
\end{align*}
The finiteness follows by the ratio test, because $p < 1$.
\end{proof}

\section{Conclusion} \label{sec:mlqmc:conclusion}
We presented a MLQMC method for the estimation of gradients for PDE constrained optimization problems. Numerical results for the Poisson equation show that the MLQMC method outperforms the MLMC and the QMC method. Its performance hinges on the faster decay of the variances of each term in the telescopic sum (\ref{eq:telescopic_sum}) defining the multilevel method. 

For the particular problem considered in this paper, a rigorous analysis confirms this faster decay of the relevant variances. The argument is based on previous works analyzing QMC methods, QMC methods with CE, MLQMC methods, QMC methods for optimization and MLMC methods for optimization.

While the experiments and the analysis are only performed for the specific elliptic model problem, one hopes that the results carry over to other cases as well. The numerical or theoretical evidence remains to be investigated, however.

\paragraph*{Acknowledgements}
PG is grateful to the DFG RTG1953 ``Statistical Modeling of Complex Systems and Processes" for funding of this research. AVB is funded by PhD fellowship 72661 by the research foundation Flanders (FWO - Fonds Wetenschappelijk Onderzoek Vlaanderen).

\bibliographystyle{siamplain}
{\footnotesize
	\bibliography{bib}
}

\end{document}